\newtheorem{theorem}{Theorem}[section]
\newtheorem*{MathieuConjecture}{Mathieu Conjecture}
\newtheorem*{VanishingConjecture}{Vanishing Conjecture}
\newtheorem*{GeneralizedVanishingConjecture}{Generalized Vanishing Conjecture}
\newtheorem{proposition}[theorem]{Proposition}
\newtheorem{corollary}[theorem]{Corollary}
\newtheorem{lemma}[theorem]{Lemma}
\theoremstyle{remark}
\newtheorem{remark}[theorem]{Remark}
\newtheorem{example}[theorem]{Example}
\theoremstyle{definition}
\newtheorem{definition}[theorem]{Definition}
\newcommand{\parder}[3][Default]{
	\frac{\partial \ifthenelse{\equal{#1}{Default}}{}{^{#1}}#2}{
              \partial #3 \ifthenelse{\equal{#1}{Default}}{}{^{#1}}}}
\newcommand{\lc}{\operatorname{lc}}
\newcommand{\imp}{{\mathversion{bold}$\Rightarrow$} }
\newcommand{\A}{{\mathcal A}}
\newcommand{\B}{{\mathcal B}}
\newcommand{\E}{{\mathcal E}}
\newcommand{\F}{{\mathcal F}}
\newcommand{\G}{{\mathcal G}}
\newcommand{\C}{{\mathbb C}}
\newcommand{\N}{{\mathbb N}}
\newcommand{\Z}{{\mathbb Z}}
\newcommand{\m}{{\mathfrak m}}
\newcommand{\p}{{\mathfrak p}}
\newcommand{\q}{{\mathfrak q}}
\newcommand{\rd}{{\mathfrak r}}
\newcommand{\nolisttopbreak}{\vspace{\topsep}\nobreak\@afterheading}
\newenvironment{listproof}[1][\proofname]{\begin{proof}[#1]\mbox{}\nolisttopbreak}{\end{proof}}
\title{Some remarks on Mathieu subspaces over associative algebras}
\author{Michiel de Bondt}
\numberwithin{equation}{section}
\begin{document}

\maketitle

\begin{abstract}
\noindent
In \cite{msub}, the author takes a closer look at algebraic elements of radicals
of Mathieu subspaces (of associative algebras) over a field, and suggests to look at 
integral elements with rings other than fields. But it seems more useful to look at 
so-called co-integral elements. We generalize his theory about algebraic radicals 
over fields to co-integral radicals over commutative rings with unity. 

Furthermore, we show that over Artin rings, the concepts of integrality and co-integrality 
coincide.  In addition, we define so-called {\em uniform Mathieu subspaces},
inspired by the fact that Mathieu subspaces with co-integral radicals are always
of this type. Besides broadening the theory of \cite{msub} by means of the new concepts 
co-integrality and uniformity, we generalize many of the results of \cite{msub} in other 
ways as well. Furthermore, we obtain several new results.  

In the last section, we disprove a conjecture by the author of \cite{msub} (in a version of 
\cite{msub} prior to finding the counterexample), by showing that so-called strongly simple 
algebras do not need to be fields over theirselves.
\end{abstract}

\noindent
\emph{Key words:} Mathieu subspaces, radicals, co-integral elements, idempotents, 
uniform Mathieu subspaces, strongly simple algebras, valuation domain.

\medskip
\noindent
\emph{2010 Mathematics Subject Classification:} 16N40; 16D99; 16D70.

\section{Introduction}

\cite{msub} is entitled `Mathieu subspaces over associative algebras', in which
the author W. Zhao introduces what the title expresses. The title of this paper
has the words `Some remarks on' in front, and can be seen as some remarks on 
the subject of \cite{msub}, as well as some remarks on \cite{msub} itself.
Let us first repeat the definition of Mathieu subspaces as formulated in \cite{msub}.

\begin{definition}[collecting {\cite[Def.\@ 1.1]{msub}} and {\cite[Def.\@ 1.2]{msub}}] 
\label{defmsub}
Let $M$ be an $R$-subspace ($R$-sub\-mod\-ule) of an associative $R$-algebra $\A$.
Then we call $M$ a {\em $\vartheta$-Mathieu subspace} of $\A$ if the following 
property holds for all $a, b, c \in \A$ such that $a^m \in M$ for all $m \ge 1$:
\begin{enumerate}
\item[(i)] $ba^m \in M$ when $m \gg 0$, if $\vartheta =$ ``{\it left}\/'';
\item[(ii)] $a^mc \in M$ when $m \gg 0$, if $\vartheta =$ ``{\it right}\/'';
\item[(iii)] $ba^m, a^mc \in M$ when $m \gg 0$, if $\vartheta =$ ``{\it pre-two-sided}\/'';
\item[(iv)] $ba^mc \in M$ when $m \gg 0$, if $\vartheta =$ ``{\it two-sided}\/''.
\end{enumerate}
\end{definition}

\noindent
If we replace every occurence of `$m \gg 0$' by `$m \ge 1$' in the above definition, 
then the cases ``{\it pre-two-sided}\/'' and ``{\it two-sided}\/'' coincide and we get 
the definition of a $\vartheta$ $R$-ideal of an associative $R$-algebra, 
or just the definition of a $\vartheta$ ideal of a ring since every ring is an 
associative $\Z$-algebra. 
Thus the concept of Mathieu subspaces is a generalization of that of ideals.

On the other hand, there is a single occurence of `$m \ge 1$' before the enumeration,
which can be replaced by `$m \gg 0$'.

\begin{proposition}[summarizing {\cite[Prop.\@ 2.1]{msub}}] \label{propmsub}
If we replace the single occurence of\/ {\upshape`$m \ge 1$'} in definition 
{\upshape\ref{defmsub}} by\/ {\upshape`$m \gg 0$'}, then we actually keep 
the same definition. 
\end{proposition}

\noindent
\iftrue
The definition of Mathieu subspace by Zhao was inspired by the following 
conjecture of O. Mathieu in \cite{Mathieu}. 

\begin{MathieuConjecture} Let $G$ be a compact Lie group with Haar
measure $\sigma$, and $f$ a complex-valued $G$-finite function on $G$ such that
$\int_G f^m {\mathrm d}\sigma=0$ for all $m \ge 1$. Then for each $G$-finite function
$g$ on $G$, we have that $\int_G gf^m{\mathrm d}\sigma=0$, for all $m \gg 0$.
\end{MathieuConjecture}

\noindent
This conjecture has been proved for the abelian case by
Duistermaat and van der Kallen in \cite{DvK}, and can be reformulated 
in terms of Mathieu subspaces, as follows.

\begin{MathieuConjecture} Let $G$ be a compact Lie group with Haar
measure $\sigma$, and $\A$ be the $\C$-algebra
of $G$-finite functions on $G$. Then the $\C$-subspace of $\A$ consisting of functions whose
integral over $G$ with respect to $\sigma$ is zero, is a Mathieu subspace of $\A$.
\end{MathieuConjecture}

\noindent
The Mathieu Conjecture resembles 
\cite[Conjecture 7.1]{vanish} by Zhao, which is given below, in both its structure and 
the fact that it implies the Jacobian conjecture.

\begin{VanishingConjecture} Let $P$ be a homogeneous
polynomial in $n$ variables over $\C$. If $\Delta^m(P^m)=0$ for all
positive $m$, then $\Delta^m(P^{m+1})=0$, for all $m$ large enough,
where $\Delta$ is the Laplace operator.
\end{VanishingConjecture}

\noindent
Inspired by the Mathieu Conjecture, the authors found the following even more resembling
equivalent formulation of the Vanishing Conjecture in \cite[Th.\@ 1.5]{twores}.

\begin{VanishingConjecture} Let $P$ be a homogeneous
polynomial in $n$ variables over $\C$. If $\Delta^m(P^m)=0$ for all
positive $m$, then $\Delta^m(Q P^m)=0$, for all $m$ large enough and all polynomials
$Q$ in $n$ variables over $\C$, where $\Delta$ is the Laplace operator.
\end{VanishingConjecture}

\noindent
Later on, Zhao dropped the homogeneity
condition on the polynomial $P$ in \cite{vanishdiff}, and replaced the Laplace operator
by any differential operator with constant coefficients, which resulted in:

\begin{GeneralizedVanishingConjecture} Let $\Lambda$ be any
differential operator with constant coefficients. If $P$ is a polynomial
over $\C$ such that $\Lambda^m(P^m)=0$ for all positive $m$, then for
any polynomial $Q$ over $\C$ we have that $\Lambda^m(QP^m)=0$, for all 
sufficiently large $m$.
\end{GeneralizedVanishingConjecture}

\noindent
Other related conjectures are the {\em Image Conjecture} and the {\em Dixmier
Conjecture}, the latter of which is actually equivalent to the Jacobian conjecture. 
See \cite{msub} and e.g.\@ \cite{images}, \cite{amazing}, and the references in all 
these papers.

\else
See \cite{msub} and the references therein for the motivation of the concept of 
Mathieu spaces.

\fi
Define the {\em radical} $\rd(M) = \sqrt{M}$ of $M$ as the set 
$\{a \in \A \mid a^m \in M$ for all $m \gg 0\}$, where $\A$ is the 
associative algebra at hand. 

If the $R$-subspace $M$ is an ideal of $\A$, then $\rd(M)$ is the 
usual radical of $M$, which is an ideal itself when $\A$ is commutative.
But $\rd(M)$ is not an ideal in general.
$\rd(M)$ does not even need to be a vector space over $R$ when $R$ is
a field. 

In proposition \ref{propmsub}, we can rewrite the condition that 
$a^m \in M$ for all $m \gg 0$ by $a \in \rd(M)$. This was actually done
in the original result \cite[Prop.\@ 2.1]{msub}. We will do the same 
in (ii) of \ref{propmsubC} below.

The radical of $M$ plays a crucial role in the theory of
Mathieu subspaces, and one can formulate the definition of Mathieu 
subspace entirely in terms of radicals, see \cite[Lm.\@ 2.3]{msub}.

In order to avoid distinguishing cases of $\vartheta$, we define a 
`constraint' $C_{\vartheta}(b,c)$ as follows.

\begin{definition} \label{defmsubC}
Let $\vartheta$ be any of the four types of Mathieu subspace and set
$$
C_{\vartheta}(b,c) := \left\{ \begin{array}{cl}
c = 1 & \mbox{if $\vartheta = \mbox{``{\it left}\/''}$} \\
b = 1 & \mbox{if $\vartheta = \mbox{``{\it right}\/''}$} \\
1 \in \{b,c\} & \mbox{if $\vartheta = \mbox{``{\it pre-two-sided}\/''}$} \\
1 + 1 = 2 & \mbox{if $\vartheta = \mbox{``{\it two-sided}\/''}$}
\end{array} \right.
$$
\end{definition}

\noindent
From definitions \ref{defmsub} and \ref{defmsubC} and proposition \ref{propmsub},
we obtain the following.

\begin{proposition} \label{propmsubC}
Let $M$ be an $R$-subspace ($R$-sub\-mod\-ule) of an associative $R$-algebra $\A$.
Then $M$ is a $\vartheta$-Mathieu subspace of $\A$, if and only if any of the following 
property holds:
\begin{enumerate}

\item[\upshape(i)] for all $a$ such that $a^m \in M$ for all $m \ge 1$ and all $b,c \in \A$ 
such that $C_{\vartheta}(b,c)$, we have $b a^m c \in M$
when $m \gg 0$;

\item[\upshape(ii)] for all $a \in \rd(M)$ and all $b,c \in \A$ 
such that $C_{\vartheta}(b,c)$, we have $b a^m c \in M$
when $m \gg 0$.

\end{enumerate}
\end{proposition}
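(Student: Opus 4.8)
The plan is to treat this proposition as a bookkeeping exercise: statements (i) and (ii) are just Definition~\ref{defmsub} and Proposition~\ref{propmsub}, respectively, rephrased uniformly through the constraint $C_\vartheta$ of Definition~\ref{defmsubC}. So first I would record that, by Definition~\ref{defmsub}, $M$ is a $\vartheta$-Mathieu subspace if and only if for every $a$ with $a^m\in M$ for all $m\ge1$ the relevant item i)--iv) holds, and that, by Proposition~\ref{propmsub}, the same is true with ``$a$ with $a^m\in M$ for all $m\ge1$'' replaced by ``$a\in\rd(M)$''. Granting this, it remains only to check, for each of the four values of $\vartheta$, that the clause ``$ba^mc\in M$ for all $b,c\in\A$ with $C_\vartheta(b,c)$, when $m\gg0$'' is literally the same assertion as item i)--iv) of Definition~\ref{defmsub} for that $\vartheta$; the two equivalences in the proposition then drop out.

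The case check itself is short. For $\vartheta=$``left'' the constraint $C_\vartheta(b,c)$ reads $c=1$, whence $ba^mc=ba^m$ and the clause becomes ``$ba^m\in M$ for all $b\in\A$, when $m\gg0$'', which is item i). Symmetrically, ``right'' forces $b=1$ and reduces to item ii). For ``two-sided'' the constraint $C_\vartheta(b,c)$ is the tautology $1+1=2$, so $b,c$ are arbitrary and the clause is literally item iv). Finally, for ``pre-two-sided'' the constraint $1\in\{b,c\}$ is the disjunction of $b=1$ and $c=1$; running over all pairs $(b,c)$ satisfying it produces exactly the two families $a^mc$ (from $b=1$, $c$ arbitrary) and $ba^m$ (from $c=1$, $b$ arbitrary), i.e.\ the two requirements of item iii). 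Here, and in the ``left'' and ``right'' cases, I use that $\A$ has a unit $1$ with $1\cdot x=x\cdot1=x$, which is implicit already in Definition~\ref{defmsubC}.

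The one point that deserves a sentence of care --- and the closest thing to an obstacle --- is the handling of the quantifier ``$m\gg0$''. In Definition~\ref{defmsub} (and Proposition~\ref{propmsub}) the threshold past which $ba^mc\in M$ is allowed to depend on $a,b,c$; consequently, in the pre-two-sided case the conjunction of ``$ba^m\in M$ when $m\gg0$'' and ``$a^mc\in M$ when $m\gg0$'' is equivalent to ``$ba^m,a^mc\in M$ when $m\gg0$'' simply by passing to the larger of the two thresholds, and quantifying the clause over the several admissible pairs $(b,c)$ is harmless because each pair keeps its own threshold. Once this is noted, all four equivalences are immediate and the proposition is proved.
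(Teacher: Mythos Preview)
Your argument is correct and follows exactly the approach the paper takes: it states that the proposition is obtained directly from Definitions~\ref{defmsub} and~\ref{defmsubC} together with Proposition~\ref{propmsub}, and you have simply spelled out the case-by-case translation and the quantifier bookkeeping that this entails. Your extra care in the pre-two-sided case about thresholds depending on $(b,c)$ is the right observation to make the equivalence watertight.
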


\noindent
In the proof of \cite[Prop.\@ 2.5]{msub}, only the case $\vartheta = 
\mbox{``{\it left}\/''}$ is done, since the other cases are similar. This 
can be made precise using $C_{\vartheta}(b,c)$. Let us prove for example
the following reformulation of \cite[Prop.\@ 2.7]{msub} (without the condition 
that $V$ is an $R$-space).

\begin{proposition}[following {\cite[Prop.\@ 2.7]{msub}}] \label{surjhom}
Let $\A$ and $\B$ be associative $R$-algebras and $\phi: \A \rightarrow \B$
a {\em surjective} $R$-algebra homomorphism. Then $V \subseteq \B$ is a 
$\vartheta$-Mathieu subspace of $\B = \phi(\A)$, if and only if $\phi^{-1}(V)$ 
is a $\vartheta$-Mathieu subspace of $\A$.
\end{proposition}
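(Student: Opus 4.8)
The plan is to prove both implications by tracking the image and preimage of powers under $\phi$, using Proposition~\ref{propmsubC} to reduce everything to the single uniform condition involving $C_{\vartheta}(b,c)$, so that no case distinction on $\vartheta$ is ever needed. The key preliminary observation is that, since $\phi$ is a surjective $R$-algebra homomorphism, it maps $1_{\A}$ to $1_{\B}$ and commutes with taking powers; moreover $\phi(b a^m c) = \phi(b)\phi(a)^m \phi(c)$, and a constraint $C_{\vartheta}(b,c)$ on a pair in $\A$ is preserved under $\phi$, while any pair in $\B$ satisfying $C_{\vartheta}$ lifts (using surjectivity and the fact that $1_{\B} = \phi(1_{\A})$) to a pair in $\A$ satisfying $C_{\vartheta}$.

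First I would show that $\phi^{-1}(V)$ being a $\vartheta$-Mathieu subspace implies $V$ is one. Take $a \in \rd(V)$ and $b, c \in \B$ with $C_{\vartheta}(b,c)$; by surjectivity pick $\tilde a \in \phi^{-1}(a)$ and, respecting the constraint, lift $b, c$ to $\tilde b, \tilde c \in \A$ with $C_{\vartheta}(\tilde b, \tilde c)$ (taking $1_{\A}$ wherever the constraint forces $1_{\B}$). Since $\phi(\tilde a^m) = a^m \in V$ for $m \gg 0$, we have $\tilde a \in \rd(\phi^{-1}(V))$, so by hypothesis and Proposition~\ref{propmsubC}(ii) we get $\tilde b \tilde a^m \tilde c \in \phi^{-1}(V)$ for $m \gg 0$, hence $b a^m c = \phi(\tilde b \tilde a^m \tilde c) \in V$ for $m \gg 0$. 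That $V$ is an $R$-subspace of $\B$ is immediate because $\phi$ is $R$-linear and surjective and $\phi^{-1}(V)$ is an $R$-subspace.

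Conversely, suppose $V$ is a $\vartheta$-Mathieu subspace of $\B$. Here $\phi^{-1}(V)$ is visibly an $R$-submodule of $\A$. Take $a \in \rd(\phi^{-1}(V))$ and $b, c \in \A$ with $C_{\vartheta}(b,c)$. Then $\phi(a)^m = \phi(a^m) \in V$ for $m \gg 0$, so $\phi(a) \in \rd(V)$; the pair $\phi(b), \phi(c)$ satisfies $C_{\vartheta}$ because $\phi(1_{\A}) = 1_{\B}$; hence Proposition~\ref{propmsubC}(ii) applied to $V$ gives $\phi(b)\phi(a)^m\phi(c) \in V$ for $m \gg 0$, i.e.\ $\phi(b a^m c) \in V$, i.e.\ $b a^m c \in \phi^{-1}(V)$ for $m \gg 0$. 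Applying Proposition~\ref{propmsubC}(ii) in the other direction finishes the proof.

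I expect the only real subtlety to be the lifting step in the first direction: one must check that the constraint $C_{\vartheta}(b,c)$ on the pair in $\B$ can be lifted to a pair in $\A$ satisfying the same constraint. This is where surjectivity is essential, and where the uniform formulation via $C_{\vartheta}$ pays off — the cases ``left'', ``right'', ``pre-two-sided'', ``two-sided'' all amount to replacing a factor that is $1_{\B}$ by $1_{\A}$ and lifting the other factor arbitrarily, and in the ``two-sided'' case there is no constraint to lift at all. Everything else is a routine translation across $\phi$ of the defining property in the form given by Proposition~\ref{propmsubC}.
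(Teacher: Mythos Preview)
Your proof is correct and follows essentially the same approach as the paper: lift elements through the surjection while preserving the constraint $C_{\vartheta}$, then apply Proposition~\ref{propmsubC}. The only differences are cosmetic: the paper handles the direction ``$V$ Mathieu $\Rightarrow$ $\phi^{-1}(V)$ Mathieu'' by citing \cite[Prop.\@ 2.5]{msub} rather than writing it out, and for the other direction the paper uses the ``$a^m \in V$ for all $m \ge 1$'' formulation (part~i) of Proposition~\ref{propmsubC}) whereas you use the radical formulation (part~ii)).
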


\begin{proof}
The `only if'-part follows from \cite[Prop.\@ 2.5]{msub}, so assume that 
$\phi^{-1}(V)$ is a $\vartheta$-Mathieu subspace of $\A$. Since 
$\phi^{-1}(V)$ is an $R$-subspace of $\A$ and
$V = \phi(\phi^{-1}(V))$ by surjectivity of $\phi$, 
we see that $V$ is an $R$-subspace of $\B$. Take $a,b,c \in \B$ such
that $C_{\vartheta}(b,c)$ and $a^m \in V$ for all $m \ge 1$. 

Then there exist $a',b',c' \in \A$ such that $C_{\vartheta}(b',c')$, 
$\phi(a') = a$, $\phi(b') = b$ and $\phi(c') = c$.
Therefore $\phi((a')^m) = a^m \in V$ for all $m \ge 1$, i.e.\@
$(a')^m \in \phi^{-1}(V)$ for all $m \ge 1$. Since $\phi^{-1}(V)$ is a 
$\vartheta$-Mathieu subspace of $\A$, we have $b'(a')^m c' \in \phi^{-1}(V)$
when $m \gg 0$. Hence $b a^m c \in \phi(\phi^{-1}(V)) = V$ when $m \gg 0$.
\end{proof}

\section{Co-integral elements in the radicals of arbitrary subspaces}

We say that $a$ is co-integral over $R$ if $a^N R[a] = a^{N+1} R[a]$ for some $N \in \N$.
If $a$ is an invertible element of an associative algebra over $R$, the $a$ is co-integral 
over $R$, if and only if $a^{-1}$ is integral over $R$. This is because of the following,
which explains the choice of the the term co-integral, too.

The co-integrality condition $a^N R[a] = a^{N+1} R[a]$ is equivalent to that $a$ satisfies an 
algebraic relation over $R$ whose
trailing nonzero coefficient is equal to one, as opposed to the leading coefficient when $a$
is integral over $R$. So if $a$ is either integral or co-integral over $R$, 
say with corresponding algebraic relation $p \in R[t]$, then by decomposing $p = t^N q(t^{-1})$
for some $N \in \N$ and a $q \in R[t]$, we obtain the other for $a^{-1}$, namely with 
corresponding algebraic relation $q$.

If $a$ is co-integral over $R$ and $a^{-1} \notin \A$,
then one can show that $a$ is either a zero divisor in $R[a]$ or zero itself: 
take $V = R[a]$ in (ii) of proposition \ref{prop3.3} later in this section.

In this section, we extend results of the end of section 2 and of section 3
in \cite{msub}, mainly by generalizing from fields to commutative rings, 
replacing `integral' by `co-integral'. Additionally, we change the order and 
setup on some points. We may omit proofs if those of the original results are already
sufficient. 

\begin{definition} \label{coid}
Let $\A$ be an associative $R$-algebra.
Call $a$ {\em co-integral over $R$} if $a^N R[a] = a^{N+1} R[a]$ for some $N \in \N$.
Define $\rd'(V) := \{ a \in \rd(V) \mid \mbox{$a$ is co-integral over $R$}\}$
for $R$-subspaces $V$ of $\A$. Let $(a)_{\vartheta}$ be the $\vartheta$-ideal 
generated by $a$ when $\vartheta \ne$ ``{\it pre-two-sided}\/'', and 
$(a)_{\mbox{\scriptsize``{\it pre-two-sided}\/''}} = \A a + a \A$ be the sum of the
left and right ideals generated by $a$. Define $(S)_{\vartheta}$ in a similar manner
for subsets $S$ of $\A$. Then $(S)_{\vartheta} = \sum_{s \in S} (s)_{\vartheta}$.
\end{definition}

\noindent
In \cite{msub}, the definition of $\rd'(V)$ is different because `integral' is used 
instead of `co-integral'. This is because the author only uses $\rd'(V)$ when the base ring
is a field. In that case, the concept of co-integrality coincides with that of integrality,
so that the author can assume integrality and use co-integrality, which is the concept
that really matters. 

So we need to distinguish co-integrality and integrality for commutative base rings in general.
There are however rings other than fields for which both concepts coincide.

\begin{theorem} \label{ArtinK}
Assume $\A$ is an associative algebra over an Artin ring $R$. Take
$a \in \A$ arbitrary. Then $a$ is co-integral over $R$, if and only if $a$ is 
integral over $R$.
\end{theorem}

\begin{proof}
Let us first prove the `if'-part. For that purpose, suppose that
$a$ is integral over $R$. From \cite[Prop.\@ 5.1]{amd}, it follows that
$R[a]$ is a finitely generated $R$-module. From \cite[Prop.\@ 6.5]{amd},
it follows that $R[a]$ is an Artinian $R$-module. So we can apply the
descending chain condition on
$$
Ra + Ra^2 + Ra^3 + \cdots \supseteq Ra^2 + Ra^3 + \cdots \supseteq Ra^3 + \cdots \supseteq \cdots
$$
from which we deduce that $a$ is co-integral over $R$.

Hence the `only if'-part remains to be proved. So let $a$ be co-integral over $R$, 
i.e.\@ $a^NR[a] = a^{N+1}R[a]$ for some integer $N \ge 0$. We distinguish two cases.
\begin{itemize}
 
\item $N = 0$. \\
Then $1 = a^N \in a^{N+1}R[a] = aR[a]$, so $a$ is invertible. 
Consequently, $a^{-1}$ is integral over $R$ (see the second 
paragraph of this section). By the `if'-part, $a^{-1}$ is 
co-integral over $R$. So $a$ is integral over $R$ 
(see the second paragraph of this section again).

\item $N \ge 1$. \\
From lemma \ref{lm3.2} below, it follows that $a^N \in a^{2N}R[a]$, say that
$a^N = a^{2N} u(a)$, where $u \in K[t]$. Let $e = a^N u(a)$. Then 
$e a^N = a^N e = a^{2N} u(a) = a^N$, so $e a^m = a^m e = a^m$ for every $m \ge N$.
Consequently, $e^2 = e$. Now make the following definitions.
\begin{align*}
\tilde{R} &:= Re & \tilde{\A} &:= a^NR[a] & \tilde{a} &:= ea
\end{align*}
Since $e a^m = a^m e = a^m$ for every $m \ge N$, we see that 
$e \in \tilde{R}$ is unital in $\tilde{\A} \supseteq \tilde{R}$.
Hence $\tilde{\A}$ is an associative $\tilde{R}$-algebra by inclusion, 
with $e$ being the multiplicative unit of both $\tilde{R}$ and $\tilde{\A}$. 
Furthermore, $\tilde{R}$ is a homomorphic image of an Artin 
ring and hence Artinian itself. 

Since $R[a]$ is commutative, it follows
that $a^m = a^m e^m = \tilde{a}^m$ for every $m \ge N$, so
$$
\{\tilde{a},e\} \subseteq \tilde{A} = a^N R[a] = \tilde{a}^N \tilde{R}[\tilde{a}]
$$
It follows that $\tilde{A} = R[\tilde{a}]$ and that $e \in \tilde{a} \tilde{R}[\tilde{a}]$,
because $N \ge 1$. The inclusion $e \in \tilde{a} \tilde{R}[\tilde{a}]$ implies
that $\tilde{a}$ is co-integral over $\tilde{R}$, with the previous case 
$N = 0$ being in force. 

So $\tilde{a} \in \tilde{\A}$ is integral over 
$\tilde{R}$ on account of the previous case, i.e.\@ there is a polynomial $p$ over $\tilde{R}$
with leading coefficient $e$, such that $p(\tilde{a}) = 0$. Assume without loss of generality that
$p$ has no terms of degree less than $N$. Then $\tilde{a}^m = a^m e^m = a^m$ for all 
$m \ge N$ tells us that we can replace $e$ by $1$ and $\tilde{a}$ by $a$ without affecting 
$p(\tilde{a}) = 0$. So $a$ is integral over $R$. \qedhere

\end{itemize}
\end{proof}

\noindent
Notice that in the proof of the `only-if'-part of theorem \ref{ArtinK}, we only use the
validity of the `if'-part of theorem \ref{ArtinK} for homomorphic images of $R$. So any
ring $R$ with the property that the `if'-part of theorem \ref{ArtinK} is satisfied for 
homomorphic images of $R$, satisfies theorem \ref{ArtinK} as a consequence. 

\begin{lemma} \label{lm3.2}
Let $\A$ be an associative $R$-algebra and $a \in \A$.
If $a^N R[a] = a^{N+1} R[a]$, then $a^N \in a^m R[a]$ for every $m \ge 0$.
\end{lemma}

\begin{proof}
Take $m$ minimum, such that $a^N \notin a^{m+1} R[a]$. If $m \le N$, then
$$
a^N \in a^{N+1} R[a] \subseteq a^{N+1 - (N-m)} R[a] = a^{m+1} R[a]
$$
Consequently, $m > N$. Furthermore,
$$
a^N \in a^m R[a] = a^m R + a^{m+1} R[a]
$$
and
$$
a^m R \subseteq a^{N-m} a^N R[a] = a^{N-m} a^{N+1} R[a] = a^{m+1} R[a]
$$
This contradicts $a^N \notin a^{m+1} R[a]$, so $a^N \in a^m R[a]$ 
for every $m \ge 0$.
\end{proof}

\noindent
Lemma \ref{lm3.2} above can be seen as a replacement for \cite[Lm.\@ 3.3]{msub}.

Co-integrality is in some sense the opposite of integrality, just as Artinian is
in some sense the opposite of Noetherian. There is indeed a strong connection between
both pairs of opposite concepts.

\begin{proposition} \label{NoethAr}
Let $\A$ be an associative $R$-algebra $\A$ and $a \in \A$. Then we have the following.
\begin{enumerate}

\item[\upshape(i)] If $R$ is Noetherian, then $a$ is integral over $R$, if and only if $R[a]$ is
          Noetherian over $R$;

\item[\upshape(ii)] If $R$ is Artinian, then $a$ is co-integral over $R$, if and only if $R[a]$ is
           Artinian over $R$;

\end{enumerate}
\end{proposition}

\begin{proof}
We start with the `if'-parts of (i) and (ii). The `if'-part of (i) follows from the
fact that the integrality of $a$ over $R$ is just the ascending chain condition on
$$
Ra \subseteq Ra + Ra^2 \subseteq Ra + Ra^2 + Ra^3 \subseteq \cdots
$$
The `if'-part of (ii) follows from the fact that the co-integrality of $a$ over $R$ is 
just the descending chain condition on
$$
Ra + Ra^2 + Ra^3 + \cdots \supseteq Ra^2 + Ra^3 + \cdots \supseteq Ra^3 + \cdots \supseteq \cdots
$$
The `only if'-parts follow from \cite[Prop.\@ 5.1]{amd} and \cite[Prop.\@ 6.5]{amd}, 
except that we need that $a$ is integral over $R$ over $R$ instead of that $a$ is co-integral 
over $R$ in (ii). But that follows from theorem \ref{ArtinK}.
\end{proof}

\noindent
As opposed to the corresponding results in \cite{msub}, lemma \ref{lm3.1}
below also describes the situation where $a$ is not invertible. This allows
us to give a proof of theorem \ref{th3.89} which is more direct than the proof
of the corresponding results in \cite{msub}.

\begin{lemma}[following {\cite[Lm.\@ 3.1]{msub}} and 
{\cite[Lm.\@ 3.2]{msub}} more or less] \label{lm3.1}
Let $\A$ be an associative $R$-algebra and $V$ an $R$-subspace of $\A$.
Assume $a, b, c \in \A$ such that $b a^m c \in V$ when $m \gg 0$ and 
$a^N R[a] = a^{N+1} R[a]$ for some $N \in \N$. Then $b a^m c \in V$ for all $m \ge N$. 
If additionally $a$ is a unit in $\A$, then $b a^m c \in V$ for all $m \in \Z$. 
\end{lemma}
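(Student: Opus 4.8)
The plan is to exploit the equality $a^N R[a] = a^{N+1} R[a]$ to trade multiplications by $a$ against multiplications by lower powers of $a$, keeping the exponent inside the ``large $m$'' regime while decreasing it. First I would record the immediate consequence that $a^N R[a] = a^{N+k} R[a]$ for every $k \ge 0$, by iterating the defining relation. In particular $a^N = a^{N+1} f(a)$ for some $f \in R[a]$, and more generally $a^N = a^{M} g(a)$ for any $M \ge N$ and a suitable $g \in R[a]$. The key point is that $g(a)$ commutes with everything in $R[a]$, hence with $a$, though of course not with $b$ or $c$ in general; so I must be careful to insert the polynomial factor on the correct side.

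The core step is a downward induction on $m$ from the threshold $m_0$ (the value beyond which $b a^m c \in V$ is known) down to $N$. Suppose $b a^m c \in V$ is known for all $m$ in some range $[m', m_0]$ with $m' > N$; I want $b a^{m'-1} c \in V$. Write $a^{m'-1} = a^{N} \cdot a^{m'-1-N}$ and use $a^N = a^{m_0 - m' + 1 + N} h(a)$ for an appropriate $h \in R[a]$, which holds since $m_0 - m' + 1 + N \ge N$; then $a^{m'-1} = a^{m_0} h(a) a^{m'-1-N} = a^{m_0} a^{m'-1-N} h(a)$ because powers of $a$ commute with $h(a)$. Expanding $h(a) = \sum_j r_j a^j$ with $r_j \in R$, we get $b a^{m'-1} c = \sum_j r_j\, b a^{m_0 + m' - 1 - N + j} c$, and every exponent appearing is $\ge m_0 \ge m'$, hence each term lies in $V$; since $V$ is an $R$-subspace, the sum lies in $V$. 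This drives the conclusion down to $m = N$.

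For the addendum, assume $a$ is a unit. From $a^N R[a] = a^{N+1}R[a]$ we get $R[a] = a R[a]$ after multiplying by $a^{-N}$, i.e.\ $1 = a\,f(a)$ for some $f \in R[a]$, so $a^{-1} = f(a) \in R[a]$; consequently $a^{-1}$ commutes with any polynomial in $a$ and $R[a] = R[a, a^{-1}]$. Now for negative exponents, given $b a^m c \in V$ for all $m \ge N$, I would write, for any $k \ge 1$, $b a^{-k} c = b a^{-k} (a^{N} a^{-N}) c$; but $a^{-N}$ need not commute past $b$. Instead use $a^{-k} = a^{-k} \cdot 1 = a^{-k} \cdot a^{N} f(a)^{N}$ is not quite it either — the clean route is: since $a^{-1} \in R[a]$, write $a^{-1} = \sum_j s_j a^j$ with $s_j \in R$ and $j$ ranging over nonnegative integers, hence $a^{-k} = (\sum_j s_j a^j)^k = \sum_i t_i a^i$ with $t_i \in R$, $i \ge 0$; then $b a^{-k} c = \sum_i t_i\, b a^i c$, and by adding a large multiple of $N$ inside if needed (again using $a^N = a^{N+1}f(a)$ to push small exponents up past $N$) every term is in $V$, so the $R$-combination is in $V$. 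Combined with the first part this gives $b a^m c \in V$ for all $m \in \Z$.

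The main obstacle I anticipate is purely bookkeeping: ensuring the polynomial factor produced from $a^N = a^{M} g(a)$ is always inserted on a side where it commutes appropriately (it commutes with powers of $a$ but not with $b,c$), and that after expansion every exponent of $a$ that appears genuinely lands in the already-established range. Both issues are handled by choosing $M$ large enough relative to $m_0$ before expanding, and by using the $R$-linearity of $V$ to break up the polynomial. No deeper idea is needed; the lemma is essentially the statement that the relation $a^N R[a] = a^{N+1}R[a]$ makes the ``eventually in $V$'' condition collapse to an honest ``in $V$ for $m \ge N$'' condition, and in the unit case extends periodically (indeed, stably) to all of $\Z$.
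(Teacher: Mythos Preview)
Your approach is essentially the paper's: use $a^N \in a^{N+1}R[a]$ to write $a^m$ as an $R$-linear combination of strictly higher powers of $a$, then invoke that $V$ is an $R$-subspace. The paper's execution is tighter---it simply takes the \emph{largest} $m \ge N$ (resp.\ $m \in \Z$ in the unit case) with $ba^mc \notin V$ and gets an immediate contradiction from $ba^mc \in ba^{m+1}R[a]\,c \subseteq V$, so both claims are handled in one stroke without the separate $a^{-1}\in R[a]$ detour; note also a harmless slip in your induction step, where the substitution actually gives $a^{m'-1}=a^{m_0}h(a)$ rather than $a^{m_0}h(a)\,a^{m'-1-N}$ (the exponents still land $\ge m_0$, so your conclusion survives).
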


\begin{proof} 
Assume there is an $m \ge N$ ($m \in \Z$) such that $b a^m c \notin V$. 
Since $b a^m c \in V$ for all $m \gg 0$, there is a largest $m \ge N$ ($m \in \Z$) 
such that $b a^m c \notin V$. From $a^N R[a] = a^{N+1} R[a]$, we deduce that
$a^N c \in a^{N+1} R[a] c$, and multiplication with $b a^{m-N}$ (which requires that 
$a$ is a unit if $m < N$) gives $b a^m c \in b a^{m+1} R[a] c \subseteq V$. 
Contradiction, so $b a^m c \in V$ for all $m \ge N$ ($m \in \Z$).
\end{proof}

\begin{theorem}[combining {\cite[Th.\@ 3.9]{msub}} and {\cite[Th.\@ 3.10]{msub}} more or less]
\label{th3.89}
Let $\A$ be an associative $R$-algebra and $M$ be a $\vartheta$-Mathieu subspace of $\A$ over $R$. 
Suppose that $a \in \A$ and $N \ge 0$. Then for 
\begin{enumerate}

\item[\upshape(1)] $a \in \rd(M)$ and $a^N R[a] = a^{N+1} R[a]$;

\item[\upshape(2)] $(a^N)_{\vartheta} \subseteq M$;

\item[\upshape(3)] $a \in \rd(M)$;

\end{enumerate}
we have {\upshape(1)} $\Rightarrow$ {\upshape(2)} $\Rightarrow$ {\upshape(3)}.

In particular, $a \in \rd'(M)$ implies that $(a^N)_{\vartheta} \subseteq M$ for some $N$.
\end{theorem}

\begin{proof}
(2) $\Rightarrow$ (3) follows immediately from the definitions of
$\rd(M)$ and $(a^N)_{\vartheta}$. To prove (1) $\Rightarrow$ (2),
take $b,c \in \A$ such that $C_{\vartheta}(b,c)$, where $C_{\vartheta}(b,c)$
is as in definition \ref{defmsubC}. 

Since $a \in \rd(M)$ and $M$ is a $\vartheta$-Mathieu subspace of $\A$, we obtain by 
ii) of proposition \ref{propmsubC} that $b a^m c \in V$ for all $m \gg 0$.
As $a^N R[a] = a^{N+1} R[a]$, we deduce from lemma \ref{lm3.1} above that 
$b a^N c \in M$.

Since $b, c \in \A$ such that $C_{\vartheta}(b,c)$ were arbitrary, 
the desired result follows from definition \ref{coid}.
\end{proof}

\subsection{Idempotents of arbitrary subspaces}

Call $a$ a {\em semi-idempotent} if $a \in R a^2$, a {\em quasi-idempotent}
if $a \in R^{*} a^2$, and an {\em idempotent} if $a = a^2$. Here, $R^{*}$
denotes the set of units of $R$. The first of the three above definitions
does not appear in \cite{msub}. The other two are taken from \cite{msub}.

\begin{lemma}[following {\cite[Lm.\@ 2.9]{msub}} more or less] \label{lm2.9}
Let $a$ be a nonzero semi-idempotent of $\A$ and $V$ be an $R$-subspace of $A$. 
Then $a = r a^2$ for some $r \in R$. 
\begin{enumerate}

\item[\upshape(i)] 
If $a$ is nilpotent, then $a = 0$. \\
If $a \ne 0$ is not a zero divisor in $R[a]$, then $a = (r \cdot 1)^{-1}$, which is a unit. \\
If $a \ne 0$ is not a zero divisor in $R[a]$ and $a$ is an idempotent, then $a = 1$.

\item[\upshape(ii)] $a \in \rd(V)$ implies $a^m \in V$ for all $m \ge 1$.

\item[\upshape(iii)] If $a \in V$ is a (quasi-)idempotent, then $a \in \rd(V)$. \\
If $a \in \rd(V)$ and $V$ is a $\vartheta$-Mathieu subspace 
of $\A$, then $(a)_{\vartheta} \subseteq V$.

\end{enumerate}
\end{lemma}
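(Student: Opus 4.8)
The plan is to prove the three items of Lemma~\ref{lm2.9} in order, using the defining relation $a = ra^2$ repeatedly and invoking the earlier machinery (Proposition~\ref{propmsubC} and Lemma~\ref{lm3.1}, via co-integrality) only where genuinely needed.

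For item~i), I would start from the definition: $a \in Ra^2$ means $a = ra^2$ for some $r \in R$. Iterating gives $a = ra^2 = r a (r a^2) = r^2 a^3 = \cdots = r^{k} a^{k+1}$ for all $k \ge 0$, so $a^m = r^{m-1}a^{2m-2} \cdot a^{2-m}$... more directly, $a = r^{k}a^{k+1}$ shows $a \in a^{k+1}R[a]$ for every $k$, hence if $a$ is nilpotent, say $a^n = 0$, then taking $k = n-1$ gives $a = r^{n-1}a^n = 0$. For the zero-divisor claim: from $a = ra^2$ we get $a(1 - ra) = 0$ in $R[a]$, so if $a$ is not a zero divisor in $R[a]$ then $ra = 1$, i.e.\ $r\cdot 1$ is invertible in $R[a]$ with inverse $a$; thus $a = (r\cdot 1)^{-1}$. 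If moreover $a = a^2$, then $a = ra^2 = ra = 1$. The only mild subtlety is bookkeeping the distinction between $r \in R$ and $r \cdot 1 \in R[a] \subseteq \A$, which the statement already phrases carefully.

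For item~ii), suppose $a \in \rd(V)$, so $a^m \in V$ for all $m \gg 0$, say for all $m \ge m_0$. I must upgrade this to all $m \ge 1$. Fix $m \ge 1$ and choose $k$ large enough that $m + k(m) \ge m_0$ — concretely, using $a = r^{j}a^{j+1}$ we have $a^m = r^{jm} a^{(j+1)m}$ for every $j \ge 0$; picking $j$ so that $(j+1)m \ge m_0$ gives $a^m = r^{jm}a^{(j+1)m} \in R \cdot V \subseteq V$ since $V$ is an $R$-subspace. Hence $a^m \in V$ for all $m \ge 1$. (This is really the statement that a nonzero semi-idempotent lies in $\rd'$ with $N$-value controlled, but the direct computation is cleanest.)

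For item~iii), the first assertion is immediate: if $a \in V$ is an idempotent then $a^m = a \in V$ for all $m \ge 1$, so $a \in \rd(V)$ by definition of the radical. For the second assertion, assume $a \in \rd(V)$ and $V$ is a $\vartheta$-Mathieu subspace. By item~ii), $a^m \in V$ for all $m \ge 1$. A nonzero semi-idempotent is co-integral over $R$: from $a = ra^2$ we get $a^1 R[a] = a R[a] \ni a = ra^2 \in a^2 R[a]$, and conversely $a^2 R[a] \subseteq aR[a]$, so $aR[a] = a^2 R[a]$, i.e.\ $a \in \rd'(\A)$ with $N = 1$. Therefore Theorem~\ref{th3.8} (or equivalently Corollary~\ref{cor3.9}) applies with $N = 1$, yielding $(a^1)_{\vartheta} = (a)_{\vartheta} \subseteq V$, as desired. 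The main obstacle, such as it is, is purely organizational: making sure item~ii) is proved before it is used in item~iii), and correctly identifying the co-integrality exponent $N=1$ so that Theorem~\ref{th3.8} gives $(a)_\vartheta$ rather than some $(a^N)_\vartheta$ with $N > 1$; the rest is routine manipulation of the relation $a = ra^2$.
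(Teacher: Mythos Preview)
Your proof is correct and follows essentially the same route as the paper. The only cosmetic difference is in item~ii): the paper observes $aR[a]=a^2R[a]$ and cites Lemma~\ref{lm3.1} with $b=c=1$ and $N=1$, whereas you unwind that lemma by hand via the identity $a^m = r^{jm}a^{(j+1)m}$; the content is identical.
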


\begin{proof} Since $a \in R a^2$, we can write $a = r a^2$ with $r \in R$. 
\begin{enumerate}

\item[(i)] If $a$ is nilpotent, say that $a^m = 0$, then $a = r a^2 = r^2 a^3 = \cdots = 
r^{m-1} a^m = 0$ indeed. Furthermore, we obtain from $a = r a^2$ that $a(1-ra) = 0$. Thus
if $a \ne 0$ is not a zero divisor in $R[a]$, then $ra = 1$, whence $a = (r \cdot 1)^{-1}$. 
If additionally $a$ is an idempotent, then we can take $r = 1$, so that $a = 1^{-1} = 1$.

\item[(ii)] Assume $a \in \rd(V)$. Since $a \in R a^2$, we have $a R[a] = a^2 R[a]$.
Hence by lemma \ref{lm3.1} with $b = c = 1 = N$, $a^m \in V$ for all $m \ge 1$, as desired.

\item[(iii)] If $a \in V$ is a quasi-idempotent, then we can take $r \in R^{*}$, so that
$a^m = r^{1-m}a \in V$ for all $m \ge 1$. In particular, $a \in V$ implies $a \in \rd(V)$. 
Next, assume that $V$ is a Mathieu-subspace of $\A$ and $a \in \rd(V)$. 
Since $a \in a^2 R[a]$, we have $a R[a] = a^2 R[a]$. Hence $(a)_{\vartheta} \subseteq V$
on account of (1) $\Rightarrow$ (2) or theorem \ref{th3.89} with $N = 1$. \qedhere

\end{enumerate}
\end{proof}

\begin{proposition}[following {\cite[Prop.\@ 3.4]{msub}} more or less] \label{prop3.3}
Let $V$ be an $R$-subspace of $\A$ and $a \in \rd'(V)$. Take $N$ such that 
$a^N R[a] = a^{N+1} R[a]$. Then there exists an idempotent $e \in a^N R[a] \subseteq V$ such 
that $a^N = a^Ne = ea^N$.

Additionally, we have
\begin{enumerate}
\item [\upshape(i)] $a$ is nilpotent, if and only if $a^N = 0$, if and only if $e = 0$,
\item [\upshape(ii)] $a$ is a unit in $\A$, if and only if $a \ne 0$ is not a zero divisor in $R[a]$,
if and only if $e = 1$.
\end{enumerate}
\end{proposition}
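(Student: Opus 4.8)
The plan is to reduce the whole statement to Lemma \ref{lm3.2}, applied to a polynomial $f$ manufactured from the co-integrality relation. Since $a^N R[a] = a^{N+1} R[a]$, in particular $a^N \in a^{N+1} R[a]$, so I would write $a^N = a^{N+1} g(a)$ for some $g(t) \in R[t]$, i.e.\ $a^N\bigl(1 - a\,g(a)\bigr) = 0$. Setting $h(t) := 1 - t\,g(t)$, so that $h(0) = 1$, and $f(t) := t^N h(t)$, this reads $f(a) = 0$. Lemma \ref{lm3.2} then yields $p(t) \in t^N R[t]$ with $p(t)^2 \equiv p(t) \pmod{f(t)}$ and $t^N \equiv t^N p(t) \pmod{f(t)}$. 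Evaluating both congruences at $t = a$ and using $f(a) = 0$ gives $p(a)^2 = p(a)$ and $a^N = a^N p(a)$; since $R[a]$ is commutative, also $a^N p(a) = p(a) a^N$. This already establishes ii) and iii).

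For i), write $p(t) = \sum_{i \ge N} r_i t^i$ with $r_i \in R$, so that $p(a)$ is an $R$-linear combination of the powers $a^i$ with $i \ge N$; as $V$ is an $R$-subspace, it suffices to show $a^i \in V$ for every $i \ge N$. Now $a \in \rd(V)$ means $a^m = 1 \cdot a^m \cdot 1 \in V$ for $m \gg 0$, and combined with $a^N R[a] = a^{N+1} R[a]$, Lemma \ref{lm3.1} (with $b = c = 1$) upgrades this to $a^i \in V$ for all $i \ge N$. Hence $p(a) \in V$.

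For iv): $a^N = 0$ forces $p(a) \in a^N R[a] = 0$; conversely $p(a) = 0$ forces $a^N = a^N p(a) = 0$ by iii); $a^N = 0$ trivially makes $a$ nilpotent; and if $a$ is nilpotent then iterating $a^N R[a] = a^{N+1} R[a]$ yields $a^N R[a] = a^{N+k} R[a] = 0$ for $k \gg 0$, whence $a^N = a^N \cdot 1 = 0$. For v): a unit of $\A$ is clearly not a zero divisor in $R[a]$; if $a$ is not a zero divisor in $R[a]$ then, in the commutative ring $R[a]$, neither is $a^N$, so $a^N\bigl(1 - p(a)\bigr) = 0$ from iii) forces $p(a) = 1$; and if $p(a) = 1$ then $1 = p(a) \in a^N R[a]$, which for $N \ge 1$ exhibits a two-sided inverse of $a$ lying in $R[a]$, so $a$ is a unit in $\A$ (in the degenerate case $N = 0$ the relation $R[a] = a R[a]$ already makes $a$ a unit and $p = 1$ works).

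I expect the argument to be essentially mechanical once Lemma \ref{lm3.2} is in hand. The two places that take a moment of care are the use of Lemma \ref{lm3.1} to pass from ``$a^m \in V$ for $m \gg 0$'' to ``$a^m \in V$ for all $m \ge N$'', which is precisely what makes i) work and the reason the hypothesis must be $a \in \rd'(V)$ rather than merely $a$ co-integral over $R$, and, in v), the observation that a power of a non-zero-divisor in a commutative ring is again a non-zero-divisor.
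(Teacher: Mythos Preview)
Your proof is correct and follows essentially the same route as the paper: construct $f(t)=t^N h(t)$ from the co-integrality relation, invoke Lemma~\ref{lm3.2} to get $p$, obtain ii) and iii) by evaluation at $a$, use Lemma~\ref{lm3.1} with $b=c=1$ for i), and handle iv) and v) by the same elementary manipulations. The only cosmetic differences are that you spell out $h(t)=1-t\,g(t)$ explicitly and, in v), remark that powers of non-zero-divisors in a commutative ring remain non-zero-divisors where the paper simply says ``cancel $a^N$''.
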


\begin{proof}
From lemma \ref{lm3.1} with $b = c = 1$, it follows that $a^m \in V$ for all 
$m \ge N$. Hence $a^N R[a] \subseteq V$. If $N = 0$, then we take $e = 1$.
If $N \ge 1$, then we take $e$ as in the case $N \ge 1$ in the proof of theorem 
\ref{ArtinK}. In both cases, $e \in a^N R[a]$ and $a^N = a^Ne = ea^N$.
\begin{enumerate}

\item[(i)] 
Since $a^N = a^Ne = ea^N$, we see that $e = 0$ implies $a^N = 0$. Conversely
$e \in a^N R[a]$ tells us that $a^N = 0$ implies $e = 0$. Thus it remains
to show that $a^N = 0$ in case $a$ is nilpotent, i.e.\@ $a \in \rd'(0)$. 
This follows from $a^N R[a] \subseteq V$, because we can take $V = 0$ when $a \in \rd'(0)$.

\item[(ii)]
If $a \ne 0$ is not a zero divisor in $R[a]$, then we can cancel $a^N$ everywhere
in $a^N = a^Ne = ea^N$, which
gives $e = 1$. Since units are not zero divisors, it remains to show that
$a$ is a unit in case $e = 1$. Hence assume that $e = 1$. If $N \ge 1$, then 
$e \in t^N R[t]$
tells us that $a \mid a^N \mid e = 1$. If $N = 0$, then $1 = a^N \in a^N R[a] = 
a^{N+1} R[a] = a R[a]$, which leads to $a \mid 1$ as well, as desired. \qedhere

\end{enumerate}
\end{proof}

\begin{theorem}[following {\cite[Th.\@ 3.5]{msub}} more or less] \label{th3.4}
Let $\A$ be an associative $R$-algebra and $V$ an $R$-subspace of $\A$. Then
the following statements are equivalent.
\begin{enumerate}
\item[\upshape(1)] Every non-unit of $\rd'(V)$ is nilpotent.
\item[\upshape(2)] Every zero divisor of $\rd'(V)$ is nilpotent. 
\item[\upshape(3)] $V$ contains no idempotents other than $0$ and $1$.
\end{enumerate}
\end{theorem}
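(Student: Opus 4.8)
The plan is to establish the cycle of implications $1) \Rightarrow 2) \Rightarrow 3) \Rightarrow 1)$, with essentially all the work delegated to Proposition \ref{prop3.3} (and, for one small step, Lemma \ref{lm2.9}). Throughout I read ``non-trivial idempotent'' as an idempotent of $\A$ that is different from both $0$ and $1$; the element $1$, if it happens to lie in $V$, is the trivial unit idempotent and is irrelevant to statements $1)$ and $2)$, since units are neither non-units nor zero divisors. The implication $1) \Rightarrow 2)$ is then immediate: a zero divisor of $\A$ is never a unit of $\A$, so any zero divisor lying in $\rd'(V)$ is in particular a non-unit lying in $\rd'(V)$, hence nilpotent by $1)$.

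For $2) \Rightarrow 3)$ I would argue by contraposition. Suppose $e \in V$ is a non-trivial idempotent. By Lemma \ref{lm2.9} iii) we have $e \in \rd(V)$, and $e$ is co-integral over $R$ with $N = 1$ because $e R[e] = e^2 R[e]$; hence $e \in \rd'(V)$. Since $e(1 - e) = e - e^2 = 0$ with $e \ne 0$ and $1 - e \ne 0$, the element $e$ is a zero divisor of $\A$ lying in $\rd'(V)$, so by $2)$ it is nilpotent. But $e = e^m$ for all $m \ge 1$, forcing $e = 0$, a contradiction. Therefore no such $e$ exists, which is statement $3)$.

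Finally, for $3) \Rightarrow 1)$ I would take an arbitrary non-unit $a \in \rd'(V)$ and pick $N$ with $a^N R[a] = a^{N+1} R[a]$. Proposition \ref{prop3.3} provides $p(t) \in t^N R[t]$ with $p(a) \in V$ and $p(a)$ an idempotent of $\A$. Since $V$ contains no non-trivial idempotent, $p(a) \in \{0, 1\}$; the case $p(a) = 1$ is excluded because it would make $a$ a unit of $\A$ by Proposition \ref{prop3.3} v), contrary to the choice of $a$. Hence $p(a) = 0$, and then $a$ is nilpotent by Proposition \ref{prop3.3} iv), as required. The argument is basically mechanical once Proposition \ref{prop3.3} is available; the only place that needs a moment's care is the step $2) \Rightarrow 3)$, where one must verify that the witnessing idempotent $e$ really lies in $\rd'(V)$ (i.e.\ is co-integral over $R$) and that ``non-trivial'' is precisely what guarantees $e$ is a genuine zero divisor. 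I do not expect any real obstacle beyond keeping the definitions straight.
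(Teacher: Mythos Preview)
Your proof is correct and follows essentially the same route as the paper: the same cycle $1) \Rightarrow 2) \Rightarrow 3) \Rightarrow 1)$, with $2) \Rightarrow 3)$ handled via a non-trivial idempotent being a non-nilpotent zero divisor in $\rd'(V)$, and $3) \Rightarrow 1)$ via Proposition~\ref{prop3.3} iv)--v). The only cosmetic difference is that for $2) \Rightarrow 3)$ the paper invokes Lemma~\ref{lm2.9}~i) directly (which already packages the facts that a non-trivial idempotent is a zero divisor and not nilpotent), whereas you cite Lemma~\ref{lm2.9}~iii) and verify the zero-divisor and non-nilpotence properties by hand; your version is a bit more explicit in checking $e \in \rd'(V)$, which the paper leaves implicit.
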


\begin{proof} Since (1) $\Rightarrow$ (2) follows from the fact that zero divisors are 
non-units, the following remains to be proved.
\begin{description}

\item[(2) \imp (3)] Assume $V$ contains an idempotent $e \notin\{0,1\}$.
Then by (i) of lemma \ref{lm2.9}, $e$ is a zero divisor because $e \ne 1$, but 
additionally $e$ is not nilpotent because $e \ne 0$.
This gives the desired result.

\item[(3) \imp (1)] Assume $a \in \rd'(V)$ is a non-unit, but not nilpotent.
By proposition \ref{prop3.3}, $R[a]$ contains an idempotent $e$, for which 
$e \ne 0$ and $e \ne 1$ on account of (i) and (ii) of proposition \ref{prop3.3}
respectively. This gives the desired result. \qedhere

\end{description}
\end{proof}

\noindent
If we take $V = \A = \rd(\A)$ in the above theorem (just as in \cite{msub}), 
we obtain the following.

\begin{corollary}[following {\cite[Cor.\@ 3.6]{msub}} more or less] \label{cor3.5}
For every associative $R$-algebra $\A$, the following statements are equivalent.
\begin{enumerate}
\item[\upshape(1)] Every non-unit of $\rd'(\A)$ is nilpotent.
\item[\upshape(2)] Every zero divisor of $\rd'(\A)$ is nilpotent. 
\item[\upshape(3)] $\A$ contains no idempotents other than $0$ and $1$.
\end{enumerate}
\end{corollary}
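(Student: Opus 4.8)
The plan is to obtain this statement as the immediate specialization of Theorem~\ref{th3.4} to the case $V = \A$. First I would record the trivial observation that $\rd(\A) = \A$: for every $a \in \A$ and every $m$ we have $a^m \in \A$, so the radical of $\A$ is all of $\A$. Consequently, the set $\rd'(\A)$ — defined in Definition~\ref{coid} as $\{a \in \rd(\A) \mid a \text{ is co-integral over } R\}$ — is just the set of co-integral elements of $\A$, and this is precisely the object that appears in statements 1) and 2) of the corollary.

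With this identification in hand, the three statements of the corollary are literally statements 1), 2), 3) of Theorem~\ref{th3.4} with $V$ taken to be $\A$: statement 1) becomes ``every non-unit of $\rd'(\A)$ is nilpotent'', statement 2) becomes ``every zero divisor of $\rd'(\A)$ is nilpotent'', and statement 3), which in the theorem reads ``$V$ contains no non-trivial idempotents'', becomes ``$\A$ contains no non-trivial idempotents''. The equivalence of the three is therefore inherited directly from Theorem~\ref{th3.4}.

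I do not anticipate any genuine obstacle: all of the real work is done by Theorem~\ref{th3.4} (and, through it, by Proposition~\ref{prop3.3} and Lemma~\ref{lm2.9}), and the corollary is nothing more than the instance of that theorem for the largest possible subspace. The only point that is worth stating explicitly, and the only place where a hasty reader might hesitate, is the identity $\rd(\A) = \A$, which is what guarantees that the symbol $\rd'$ occurring in the corollary denotes the same set as the symbol $\rd'(V)$ occurring in the theorem.
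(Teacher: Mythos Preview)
Your proposal is correct and matches the paper's own derivation exactly: the paper obtains the corollary by the single remark ``take $V = \A = \rd(\A)$ in the above theorem'', which is precisely your specialization of Theorem~\ref{th3.4}. Your added sentence explaining why $\rd(\A) = \A$ (and hence why $\rd'(\A)$ in the corollary coincides with $\rd'(V)$ for $V = \A$) is a helpful clarification but not a departure from the paper's approach.
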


\begin{lemma}[same as {\cite[Lm.\@ 3.7]{msub}}] \label{lm3.6}
Let $\A$ be an associative $R$-algebra. Then for the following three statements:
\begin{enumerate}
\item[\upshape(1)] every non-unit of $\A$ is nilpotent;
\item[\upshape(2)] $\A$ is a local $R$-algebra;
\item[\upshape(3)] $\A$ contains no idempotents other than $0$ and $1$;
\end{enumerate}
we have {\upshape(1)} $\Rightarrow$ {\upshape(2)} $\Rightarrow$ {\upshape(3)}.
\end{lemma}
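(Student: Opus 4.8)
The plan is to prove the two implications separately. Throughout I assume $\A \neq 0$, the zero algebra being degenerate for this statement.

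For the implication 1) $\Rightarrow$ 2), the idea is to show that the set $N$ of non-units of $\A$ --- which by hypothesis 1) is exactly the set of nilpotent elements --- is a proper two-sided ideal, which is one of the standard characterizations of a local ring. I would first record the elementary fact that in a nonzero ring a nilpotent element has no one-sided inverse: if $a^{n} = 0$ with $n$ minimal and $ba = 1$, then $a^{n-1} = b a^{n} = 0$, a contradiction, and symmetrically for a right inverse. From this, $N$ is closed under multiplication by arbitrary elements of $\A$: for $a \in N$ and $x \in \A$, neither $xa$ nor $ax$ can be a unit (an inverse of $xa$ would yield a left inverse of $a$, and an inverse of $ax$ a right inverse of $a$), so $xa, ax$ are non-units and hence nilpotent. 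Finally, $N$ is closed under addition: if $a, b \in N$ and $a + b$ were a unit $u$, then $u^{-1}a \in N$ by the step just proved, hence $u^{-1}a$ is nilpotent, so that $u^{-1}b = 1 - u^{-1}a$ is a unit; but $u^{-1}b \in N$ is nilpotent, hence not a unit --- a contradiction. Thus $N$ is a proper two-sided ideal equal to the set of all non-units, and so $\A$ is local.

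For the implication 2) $\Rightarrow$ 3), let $\m$ denote the ideal of non-units of the local algebra $\A$ and let $e \in \A$ be an idempotent, so that $e(1 - e) = 0$. If $e \notin \m$, then $e$ is a unit and multiplying $e^{2} = e$ by $e^{-1}$ gives $e = 1$. If $e \in \m$, then $1 - e \notin \m$ (otherwise $1 = e + (1 - e) \in \m$), so $1 - e$ is a unit, and multiplying $e(1 - e) = 0$ on the right by $(1 - e)^{-1}$ gives $e = 0$. Hence the only idempotents of $\A$ are $0$ and $1$.

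The main obstacle is the additive closure of $N$ in the first implication: one cannot conclude that $u^{-1}a$ is nilpotent merely from $a$ being nilpotent, since nilpotency need not be preserved under multiplication in a noncommutative ring. The device is to use hypothesis 1) a second time --- first $u^{-1}a$ is nilpotent because it is a non-unit, and then a contradiction is reached from $1 - u^{-1}a$ being simultaneously a unit and nilpotent.
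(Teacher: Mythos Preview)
Your proof is correct and follows essentially the same route as the paper: for $1) \Rightarrow 2)$ both reach a unit-versus-nilpotent contradiction from $(a+b)^{-1}a$ and $(a+b)^{-1}b = 1 - (a+b)^{-1}a$, and for $2) \Rightarrow 3)$ both use that in a local ring one of $e$, $1-e$ must be a unit. The only cosmetic difference is that you prove absorption under multiplication first (via the one-sided-inverse observation) and then invoke it inside the additive step, whereas the paper handles the two closure properties independently.
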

\iftrue

\begin{listproof}[Proof (somewhat more direct than the original proof)]
\begin{description}
\item[(1) \imp (2)] It is a nice exercise for the reader to show that the 
nilpotent elements of $\A$ form an ideal if (1) holds. Hence (1) implies (2).

\item[(2) \imp (3)] 
Assume $\A$ is local and $\A$ has an idempotent $e$. 
Then $e (1 - e) = 0 = (1 - e) e$. 
Since $e$ and $1 - e$ cannot be contained in the same proper ideal
of $\A$ and $\A$ is local, one of $e$ and $1 - e$ must be a unit.
The other must be zero, because units are not zero divisors and 
$e (1 - e) = 0 = (1 - e) e$. So $e \in \{0,1\}$. \qedhere

\end{description}
\end{listproof}
\fi

\subsection{Quasi-stable algebras}

The following definition appears at the beginning of section 7 in \cite{msub},
which has the same title as this subsection. 

\begin{definition}
Let $\A$ be an associative $R$-algebra. We say that $\A$ is $\vartheta$-quasi-stable
(or $\vartheta$-stable), if every $R$-subspace $V$ of $\A$ with $1 \notin V$ is
a $\vartheta$-Mathieu subspace of $\A$ (or a $\vartheta$-ideal of $\A$ respectively).
\end{definition}

\noindent
If we combine lemma \ref{lm3.6} with (3) $\Rightarrow$ (1) of corollary 
\ref{cor3.5} (just as in \cite{msub}), then we get the first assertion
in the following. 

\begin{corollary}[following {\cite[Cor.\@ 3.8]{msub}} and {\cite[Prop.\@ 7.4]{msub}}] 
\label{cor3.7}
For every associative $R$-algebra $\A$ with $\rd'(\A) = \A$, the three statements in 
lemma \ref{lm3.6} are equivalent. Furthermore, $\A$ is (two-sided) quasi-stable 
over $R$ in case any of these three statements is fulfilled.
\end{corollary}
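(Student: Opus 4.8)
The plan is to prove Corollary \ref{cor3.7} in two parts, matching its two assertions. For the first assertion, I would observe that the hypothesis $\rd'(\A) = \A$ says every element of $\A$ is co-integral over $R$ (and lies in $\rd(\A)$, which is automatic). In particular, corollary \ref{cor3.5} applies verbatim with this hypothesis plugged in. Lemma \ref{lm3.6} already gives us the chain of implications 1) $\Rightarrow$ 2) $\Rightarrow$ 3) among the three statements there. So the only thing that needs proving is the closing implication 3) $\Rightarrow$ 1), which turns the chain into a cycle, hence into an equivalence. Here statement 3) of lemma \ref{lm3.6} (`$\A$ contains no non-trivial idempotents') is literally statement 3) of corollary \ref{cor3.5}, and statement 1) of lemma \ref{lm3.6} (`every non-unit of $\A$ is nilpotent') is literally statement 1) of corollary \ref{cor3.5} once we note that under $\rd'(\A) = \A$ we have `non-unit of $\A$' $=$ `non-unit of $\rd'(\A)$'. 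Thus corollary \ref{cor3.5}'s equivalence 3) $\Leftrightarrow$ 1) supplies exactly the missing arrow, and the three statements of lemma \ref{lm3.6} become equivalent. This is routine; the only point to be careful about is the dictionary between the (identically worded but differently numbered) statements in the two results, so I would spell that correspondence out explicitly.

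For the second assertion, suppose one — hence all — of the three statements holds; I want to show $\A$ is two-sided quasi-stable, i.e.\ every $R$-subspace $V$ of $\A$ with $1 \in V$ is a two-sided Mathieu subspace. By proposition \ref{propmsubC} ii) it suffices to check: for every $a \in \rd(V)$ and all $b, c \in \A$, we have $b a^m c \in V$ for $m \gg 0$; since we are in the two-sided case the constraint $C_{\vartheta}(b,c)$ is vacuous (it reads $1+1=2$). Fix such an $a$. Since $\rd'(\A) = \A$, the element $a$ is co-integral over $R$, so $a \in \rd(V) \cap \rd'(\A) = \rd'(V)$, and proposition \ref{prop3.3} applies: there is $N$ with $a^N R[a] = a^{N+1} R[a]$, and an idempotent $p(a) \in V$ with $a^N = a^N p(a) = p(a) a^N$. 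Now I split on whether $a$ is a unit. If $a$ is a unit, then by proposition \ref{prop3.3} v) we get $p(a) = 1 \in V$, and lemma \ref{lm3.1} (with the unit clause) gives $b a^m c \in V$ for all $m \in \Z$, in particular for all large $m$. If $a$ is not a unit, then by statement 1) of lemma \ref{lm3.6} — available to us — $a$ is nilpotent, so by proposition \ref{prop3.3} iv) we have $a^N = 0$, whence $b a^m c = 0 \in V$ for all $m \ge N$. Either way $b a^m c \in V$ for $m \gg 0$, so $V$ is a two-sided Mathieu subspace, and $\A$ is two-sided quasi-stable.

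I do not expect a genuine obstacle here: essentially everything is bookkeeping on top of results already established in the excerpt. The one place that needs a little care — and the part I would phrase most carefully — is the case split in the second assertion, specifically the appeal to lemma \ref{lm3.6} 1) to force non-units to be nilpotent; this is exactly why the hypothesis `any of the three statements is fulfilled' is invoked, and why the first assertion (equivalence of the three) is proved first, so that we are entitled to use statement 1) regardless of which of the three we were handed. A secondary subtlety is making sure the membership $1 \in V$ is actually used: it enters precisely in the unit case via $p(a) = 1 \in V$ (and trivially, $1 \in V$ also guarantees $V \ne \emptyset$). I would also remark in passing that one could instead deduce the second assertion from the fact that $V$ with $1 \in V$ contains the idempotent $p(a)$ and then invoke lemma \ref{lm2.9} iii), but the direct argument above via lemma \ref{lm3.1} and proposition \ref{prop3.3} iv),v) is cleaner and keeps the write-up short.
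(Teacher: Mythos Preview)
Your treatment of the first assertion (the equivalence of the three statements) is correct and matches the paper's approach.

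For the second assertion, your unit case contains a genuine gap: you invoke lemma \ref{lm3.1} to conclude $b a^m c \in V$ for all $m \in \Z$, but the hypothesis of lemma \ref{lm3.1} is precisely that $b a^m c \in V$ for $m \gg 0$, which is what you are trying to establish. All you actually have is $a^m \in V$ for $m \gg 0$, so lemma \ref{lm3.1} with $b=c=1$ only recovers $1 = a^0 \in V$, i.e.\ your starting assumption; nothing forces $b a^m c \in V$ for general $b,c$. In fact the claim is \emph{false} under the reading ``$1 \in V$'': take $\A = K[x]/(x^2)$ over a field $K$ (so $\rd'(\A)=\A$ and every non-unit is nilpotent) and $V = K \cdot 1$; then $1 \in \rd(V)$ but $x \cdot 1^m = x \notin V$ for every $m$, so $V$ is not a Mathieu subspace. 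The paper's own proof actually argues for subspaces with $1 \notin V$ (the printed definition is evidently a misprint): it shows $\rd(V) \subseteq \rd((0))$ by taking a non-nilpotent $a \in \rd(V)$, observing that $a$ must then be a unit by statement 1), and applying lemma \ref{lm3.1} with $b=c=1$ to derive the contradiction $1 = a^0 \in V$. Once $\rd(V)$ consists only of nilpotents, $V$ is automatically a two-sided Mathieu subspace.
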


\begin{proof}
Assume that the equivalent statements of lemma \ref{lm3.6} are fulfilled.
Let $V$ be an $R$-subspace of $\A$ such that $1 \not\in V$. 
It suffices to show that $\rd(V) \subseteq \rd((0))$.
So assume $a \in \rd(V) = \rd'(V)$ such that $a \not\in \rd((0))$.
On account of the first statement of lemma \ref{lm3.6}, $a$ is invertible over $R$. 
Hence $a^m \in V$ for all $m \in \Z$ by lemma \ref{lm3.1}. 
This contradicts $1 \not\in V$, thus $a \in \rd((0))$. 
\end{proof}

\noindent
A generalization which applies to both integrality and co-integrality 
of $a$, is that some coefficient of the polynomial which has $a$ as a root
must be equal to $1$, but not necessarily the leading or trailing nonzero 
coefficient. 

If $a$ is invertible, then we can shift this polynomial
to obtain a Laurant polynomial which has $1$ as constant term.
In other words, $1 \in aR[a] + a^{-1}R[a^{-1}]$.

Similarly, an invertible element $a \in A$ is co-integral or integral
over $R$, if and only if $1 \in aR[a]$ or $1 \in a^{-1}R[a^{-1}]$
respectively.

\begin{proposition}[generalizing {\cite[Prop.\@ 7.4]{msub}}] 
Let $\A$ be an associative $R$-algebra, such that every element of $\A$ is
either invertible or nilpotent, and every invertible element $a \in \A$
satisfies $1 \in aR[a] + a^{-1}R[a^{-1}]$. Then $\rd'(\A) = \A$ and $\A$
is integral and (two-sided) quasi-stable over $R$.
\end{proposition}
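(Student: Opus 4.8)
The plan is to verify the three assertions in turn: that $\rd'(\A) = \A$, that $\A$ is integral over $R$, and that $\A$ is two-sided quasi-stable over $R$. For the first claim, take an arbitrary $a \in \A$; by hypothesis $a$ is either nilpotent or invertible. If $a$ is nilpotent, then $a \in \rd'(\A)$ trivially, since $a^m = 0$ eventually and $a$ is co-integral over $R$ (indeed $a^N R[a] = 0 = a^{N+1} R[a]$ once $a^N = 0$). If $a$ is invertible, then writing the hypothesis $1 \in a R[a] + a^{-1} R[a^{-1}]$ and multiplying through by a sufficiently high power $a^k$ of $a$, I would clear the negative powers of $a$ and obtain $a^k \in a^{k+1} R[a]$ for $k$ large enough; this gives $a^k R[a] = a^{k+1} R[a]$, so $a$ is co-integral over $R$, and since $a$ is a unit it lies in $\rd(\A)$ and hence in $\rd'(\A)$. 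This proves $\rd'(\A) = \A$.

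For integrality of $\A$ over $R$: again split on whether $a$ is nilpotent or invertible. A nilpotent element is integral over $R$ (it satisfies $t^m = 0$). For an invertible $a$, I would use the remark preceding the proposition, which states that an invertible element is co-integral over $R$ iff $1 \in a R[a]$, and — by the symmetry of that remark applied to $a^{-1}$ — that $a$ is integral over $R$ iff $1 \in a^{-1} R[a^{-1}]$. So I need to show $1 \in a^{-1} R[a^{-1}]$. Starting from $1 \in a R[a] + a^{-1} R[a^{-1}]$, I would multiply by a high power $a^{-\ell}$ of $a^{-1}$ to kill the $aR[a]$ contribution modulo things divisible by $a^{-1}$; more carefully, since we already know $a$ is co-integral (hence $a^N R[a] = a^{N+1} R[a] = \cdots$, so $R[a] \subseteq a^{-1} R[a] \subseteq \cdots$, giving $a R[a] \subseteq R[a] \subseteq a^{-1} R[a^{-1}]$ after checking $R[a] \subseteq R[a^{-1}]$ which follows from $a^N \in a^{N+1} R[a]$), one gets $1 \in a^{-1} R[a^{-1}]$ directly. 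Thus $a$ is integral over $R$, and $\A$ is integral over $R$.

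For two-sided quasi-stability: since $\rd'(\A) = \A$, every zero divisor of $\rd'(\A)$ — in particular every non-unit, as non-units are nilpotent by hypothesis — is nilpotent, so statement 1) of corollary \ref{cor3.5} holds, hence also statement 1) of lemma \ref{lm3.6} (every non-unit of $\A$ is nilpotent). By corollary \ref{cor3.7}, since $\rd'(\A) = \A$ and statement 1) of lemma \ref{lm3.6} is fulfilled, $\A$ is two-sided quasi-stable over $R$. This completes the proof.

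The step I expect to require the most care is the manipulation clearing negative powers of $a$: one must check that multiplying the membership $1 \in a R[a] + a^{-1} R[a^{-1}]$ by an appropriate power of $a$ (respectively $a^{-1}$) genuinely lands inside $a^{k+1} R[a]$ (respectively inside $a^{-1} R[a^{-1}]$ with the right leftover term), and that the two halves of the argument interlock so that co-integrality established in the first part can be fed into the integrality argument in the second. Everything else is an immediate appeal to the already-established lemmas and corollaries, in particular corollaries \ref{cor3.5} and \ref{cor3.7} and lemma \ref{lm3.6}.
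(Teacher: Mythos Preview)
The step you yourself flag as delicate is in fact a genuine gap. Writing $1 = a\,p(a) + a^{-1}q(a^{-1})$ with $p,q\in R[t]$ and multiplying by $a^{k}$ yields
\[
a^{k} \;=\; a^{k+1}p(a) \;+\; a^{k-1}q(a^{-1}),
\]
and for $k>\deg q$ the second summand does land in $R[a]$; but it lands in $R + Ra + \cdots + Ra^{k-1}$, not in $a^{k+1}R[a]$. So all you extract is an algebraic relation for $a$, not the co-integrality condition $a^{k}\in a^{k+1}R[a]$. Concretely, take $R=\Z$, $\A=\Q$, $a=2$: the hypothesis $1\in aR[a]+a^{-1}R[a^{-1}]$ holds (e.g.\ $1=a^{-1}\cdot 2\in a^{-1}\Z[a^{-1}]$), yet $2^{N}\notin 2^{N+1}\Z$ for any $N$, so $a=2$ is \emph{not} co-integral over $\Z$. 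Your integrality argument inherits the same defect, and in addition the inference ``$a^{N}\in a^{N+1}R[a]$, $a$ a unit, hence $R[a]\subseteq R[a^{-1}]$'' is backwards: from $a^{N}=a^{N+1}g(a)$ one gets $a^{-1}=g(a)\in R[a]$, so $R[a^{-1}]\subseteq R[a]$, which is the opposite inclusion and does not yield $1\in a^{-1}R[a^{-1}]$.

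The paper's argument is structurally different and supplies the idea your attempt lacks. From a Laurent polynomial $f$ with no constant term and $f(a)=1$, the paper discards all terms whose coefficient $r_i$ has $r_i\cdot 1$ nilpotent in $\A$, obtaining $\tilde f$; since by lemma~\ref{lm3.6} the nilpotents of $\A$ form an ideal, $1-\tilde f(a)=f(a)-\tilde f(a)$ is nilpotent, and a terminating geometric series gives $1=\tilde f(a)\sum_{i=0}^{k}(1-\tilde f(a))^{i}$. This manufactures a Laurent relation $g(a)=0$ whose leading and lowest nonzero coefficients are (up to sign) powers of the surviving $r_i$, hence invertible, and from this the paper reads off both integrality and co-integrality of $a$; quasi-stability then follows from corollary~\ref{cor3.7} exactly as in your final paragraph. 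The device you are missing is this filtering-plus-geometric-series step: naked multiplication by powers of $a$ cannot force the extremal coefficients to be units. (Your $(\Z,\Q)$ test case shows, incidentally, that even the paper's extremal coefficients are guaranteed invertible only in $\A$, not in $R$, so the passage to co-integrality \emph{over $R$} deserves scrutiny there as well; but that is separate from the gap in your manipulation.)
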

\iftrue

\begin{proof}
We first show that each $a \in \A$ is both integral and co-integral over $R$. So take
$a \in \A$ arbitrary. If $a$ is nilpotent, say that $a^m = 0$, then clearly
$a$ is integral over $R$ and $a^m R[a] = a^{m+1}R[a]$. Hence suppose that $a$ is not nilpotent.
Then $a$ is invertible and $1 \in aR[a] + a^{-1}R[a^{-1}]$ by assumption.
Say that $1 = f(a)$ where $f(z)$ is a Laurant polynomial without
constant term. Let $\tilde{f}(z)$ be the Laurant polynomial consisting
of the terms $r_i z^i$ of $f$ such that $r_i \cdot 1$ is not nilpotent.
Then $\tilde{f}$ has no constant term either. 

Since the nilpotent elements of the commutative algebra $R[a,a^{-1}]$ form an ideal 
of $R[a,a^{-1}]$, we have that $1 - \tilde{f}(a) = 
f(a) - \tilde{f}(a)$ is nilpotent. Hence $\tilde{f}(a) = 1 - (1 - \tilde{f}(a))$
is invertible in $R[a,a^{-1}]$ and
$$
1 = \frac{\tilde{f}(a)}{1 - (1 - \tilde{f}(a))} = 
\tilde{f}(a)\sum_{i=0}^{k} (1 - \tilde{f}(a))^i
$$
for some $k \in \N$.

Notice that the leading term of $\tilde{f}(z)\sum_{i=0}^{k} 
(1 - \tilde{f}(z))^i - 1$ is $-1$ in case $\tilde{f}(z) \in R[z^{-1}]$.
Since every nonzero coefficient of $\tilde{f}$ is invertible, the leading
nonzero coefficient of $\tilde{f}(z)\sum_{i=0}^{k} 
(1 - \tilde{f}(z))^i - 1$ is invertible, regardles of whether
$\tilde{f}(z) \in R[z^{-1}]$ or not. Hence $a$ is integral over $R$. 

Similarly, the trailing nonzero of $\tilde{f}(z)\sum_{i=0}^{k} 
(1 - \tilde{f}(z))^i - 1$ is invertible, and $a$ is co-integral over $R$. 
Thus $\A$ is integral over $R$, $\rd'(\A) = \A$, and 
by corollary \ref{cor3.7}, $\A$ is two-sided quasi-stable over $R$.
\end{proof}
\noindent
\else

\noindent
It is a nice puzzle for the reader to show that every (invertible) element
of $\A$ is actually both integral and co-integral in the above theorem, 
after which the proof follows directly from corollary \ref{cor3.7}.

\fi
For more results about quasi-stable algebras, see section 7 of \cite{msub}.

\subsection{Localization of the base ring}

We end this section with some results about integrality, co-integrality and localization 
of the base ring. First, we formulate results about co-integrality and localization 
of the base ring.

\begin{proposition}
Assume $\A$ is an associative $R$-algebra, and $S \ni 1$ is a multiplicatively closed
subset of $R$.
\begin{enumerate}

\item[\upshape(i)]
If $s a^N \in a^{N+1} R[a]$ for some $s \in S$, then $s^{-1}a$ is co-integral over $R$.

\item[\upshape(ii)]
If $a \in \A$ is co-integral over $R$, then for all $s, s' \in S$,
$s^{-1} a$ is co-integral over $R$ and $s^{-1} s' a$ is co-integral over $S^{-1} R$.

\item[\upshape(iii)]
If $b \in S^{-1} A$ is co-integral over $R$, then for all $s, s' \in S$,
$s^{-1} b$ is co-integral over $R$ and $s^{-1} s' b$ is co-integral over $S^{-1} R$.

\item[\upshape(iv)]
If $b \in S^{-1} A$ is co-integral over $S^{-1} R$, then there exists
an $s \in S$ such that $s^{-1} b$ is co-integral over $R$. Furthermore,
$s^{-1} s' b$ is co-integral over $S^{-1} R$ for all $s, s' \in S$.

\end{enumerate}
\end{proposition}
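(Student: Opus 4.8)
The plan is to reduce all five parts to the characterisation ``$a$ is co-integral over a base ring $\Leftrightarrow$ $a^N = a^{N+1}g(a)$ for some $N$ and some polynomial $g$ over that ring'', together with two routine moves. The first is \emph{unit rescaling}: if $u$ is a unit of the base ring, then $ua$ generates the same subalgebra as $a$ and $(ua)^N = u^N a^N = u^N a^{N+1} g(a) = u^{-1}(ua)^{N+1}g(a)$, so $ua$ is co-integral over the base ring whenever $a$ is, with the same $N$. The second is \emph{clearing denominators}: a witness $b^N = b^{N+1}h(b)$ with $h$ over $S^{-1}R$ has only finitely many coefficients, so multiplying through by a suitable $\sigma \in S$ gives $\sigma b^N \in b^{N+1}R[b]$; conversely, a relation $\sigma a^N \in a^{N+1}R[a]$ with $\sigma \in S$ may be read on $S^{-1}\A$, where $\sigma$ is invertible and hence cancellable.

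With these in hand I would proceed as follows. For (ii): the first claim is immediate --- from an algebraic relation $\sum_i r_i a^i = 0$ with not all $r_i$ zero, let $j$ be least with $r_j \ne 0$ and factor $a^{j+1}$ out of the higher terms to get $r_j a^j \in a^{j+1}R[a]$, $r_j \ne 0$, $N = j$. For the second claim, set $c = s^{-1}a$, so $a = sc$ and $sa^N \in a^{N+1}R[a]$ becomes $s^{N+1}c^N \in s^{N+1}c^{N+1}R[sc] \subseteq s^{N+1}c^{N+1}R[c]$; cancelling the unit $s^{N+1}$ of $S^{-1}\A$ gives $c^N \in c^{N+1}R[c]$. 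Parts (iii) and (iv) are then the two moves combined: ``$s^{-1}a$ co-integral over $R$'' is the argument just given, started instead from $a^N \in a^{N+1}R[a]$ (which only eases the cancellation), and ``$s^{-1}s'a$ co-integral over $S^{-1}R$'' is unit rescaling, since $s^{-1}s'$ is a unit of $S^{-1}R$ and $a$ is co-integral over $S^{-1}R$ once it is over $R$; (iv) repeats these verbatim with $b \in S^{-1}\A$ in place of $a$. For the first half of (v), clear denominators in a witness of co-integrality of $b$ over $S^{-1}R$ to obtain $\sigma b^N \in b^{N+1}R[b]$ with $\sigma \in S$, then apply the second claim of (ii) (with $b$ for $a$ and $\sigma$ for $s$) to get that $\sigma^{-1}b$ is co-integral over $R$; the ``furthermore'' in (v) is unit rescaling again.

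The step I expect to be the real obstacle is (i), where $r$ is an arbitrary element of $R$, so none of the above cancellations apply: substituting $a = rb$ into $a^N = a^{N+1}g(a)$ only gives $r^N b^N = r^{N+1}b^{N+1}g(rb)$, and dividing by $r^N$ is not allowed. What I would try is to invoke proposition \ref{prop3.3} with $V = \A$, obtaining an idempotent $e = p(a) \in a^N R[a] \subseteq R[a] \subseteq R[b]$ with $a^N e = a^N$; since $e \in a^N R[a] = r^N b^N R[rb] \subseteq r^N b^N R[b]$, in the splitting $R[b] = eR[b] \times (1-e)R[b]$ the element $r$ --- hence $b$ --- becomes invertible on the first factor, so $b$ is co-integral there, while on the second factor $a^N e = a^N$ forces $(1-e)(rb)^N = 0$, leaving one to show that an element $y$ with $r^N y^N = 0$ is co-integral over $R$. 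That last reduction is the delicate point: it is what one most naturally handles by requiring $r$ to lie in $S$ (so that $b = r^{-1}a$ and (i) becomes a case of (iii)), and I would want to pin down precisely the hypothesis under which the $(1-e)$-component can be controlled before asserting (i) in the stated generality.
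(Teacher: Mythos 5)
The paper gives no proof of this proposition at all --- it is explicitly ``left as an exercise to the reader'' --- so there is nothing to compare your argument against; it has to be judged on its own. Your treatment of (ii)--(v) is correct and complete in outline: the reduction of everything to the single computation ``$sa^N=a^{N+1}g(a)$ with $g\in R[t]$, substitute $a=sc$, pull the powers of $s$ out of $g(sc)$ into the coefficients, and cancel the unit $s^{N+1}$ of $S^{-1}\A$'', together with unit rescaling over $S^{-1}R$ and clearing of denominators in a finite witness, does prove all four of those parts (including the fact that the same computation runs unchanged for $b\in S^{-1}\A$, since $S$ is already inverted there).

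Your hesitation about (i) is not excessive caution: (i) is false as stated. Take $R=\Z$ and $\A=\Z[x]/(2x)$, with $r=2$, $b=x$, so $a=rb=0$. Then $a$ is co-integral over $R$ (with $N=1$), but $b=x$ is not: $x^N\,\Z[x]/(2x)=\mathbb{F}_2x^N\oplus\mathbb{F}_2x^{N+1}\oplus\cdots$ strictly contains $x^{N+1}\,\Z[x]/(2x)$ for every $N\ge 1$, and $N=0$ fails as well. This is exactly the obstruction your idempotent decomposition isolates: on the $(1-e)$-component one only gets $r^N\bigl((1-e)b\bigr)^N=0$, which does not force nilpotence (here $e=0$, $2x=0$, $x$ not nilpotent). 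So (i) needs an extra hypothesis --- e.g.\ that $r$ act as a non-zero-divisor on $R[b]$, or that $r\in S$ and one works in $S^{-1}\A$, in which case $r$ is a unit and your cancellation argument applies. Note that for the internal purposes of the proposition only this invertible case is ever needed, and your proofs of (iii)--(v) already bypass (i) entirely, so the defect in (i) does not propagate.
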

\iftrue

\begin{listproof}
\begin{enumerate}

\item[(i)] Multiplication of $s a^N \in a^{N+1} R[a]$ by $s^{-N-1}$ gives
$(s^{-1}a)^N = s^{-N} a^N \in s^{-N-1} a^{N+1} R[a] \subseteq (s^{-1}a)^{N+1} R[s^{-1}a]$.

\item[(ii)] Multiplication of $a^N \in a^{N+1} R[a]$ by $s^{-N}$ gives
$(s^{-1}a)^N = s^{-N} a^N \in s^{-N-1} a^{N+1} s R[a] \subseteq (s^{-1}a)^{N+1} R[s^{-1}a]$.
$(s^{-1}a)^N \in (s^{-1}a)^{N+1} R[s^{-1}a]$ in turn can be multiplied by $(s')^N$, to obtain
the second claim.
 
\item[(iii)] Replace $a$ by $b$ in the proof of (ii).

\item[(iv)] Say that $b^N = b^{N+1} p(b)$ for some univariate polynomial $p$ over $S^{-1}R$.
Let $s$ be the product of the denominators of the coefficients of $p$. Then
$(s^{-1}b)^N = s^{-N} b^N = s^{-N-1} b^{N+1} s\, p(b) \in (s^{-1}b)^{N+1} R[s^{-1}b]$. The second claim
follows is a similar manner as the second claim in (iii). \qedhere

\end{enumerate}
\end{listproof} 
\noindent
\else

\noindent
The proof is straightforward and left as an exercise to the reader.

\fi
Although co-integrality seems a more useful concept than integrality in this context,
(v) of the next theorem is about integrality and localization of the base ring.
 
\begin{theorem} \label{locint}
Assume $\A$ is an associative $R$-algebra, and $S \ni 1$ is a multiplicatively closed
subset of $R$. Write $\phi: \A \rightarrow S^{-1}\A$ for the localization map.
\begin{enumerate}

\item[\upshape(i)] If $M$ is a $\vartheta$-Mathieu subspace over $S^{-1} R$ of $S^{-1}\A$, 
then $M$ is also a $\vartheta$-Mathieu subspace over $R$ over $S^{-1}\A$.

\item[\upshape(ii)] If $M$ is a $\vartheta$-Mathieu subspace over $R$ of $S^{-1}\A$, then 
$\phi^{-1}(M)$ is a $\vartheta$-Mathieu subspace over $R$ of $\A$.

\item[\upshape(iii)] If $V \subseteq S^{-1}\A$, then $M := \phi^{-1}(V)$ is a 
$\vartheta$-Mathieu subspace over $R$ of $\A$, if and only if $\phi(M)$ 
is a $\vartheta$-Mathieu subspace over $R$ of $\phi(\A)$.

\item[\upshape(iv)] Assume that $V \subseteq S^{-1}\A$ and 
$M := \phi^{-1}(V)$ is a $\vartheta$-Mathieu subspace over $R$ of $\A$,
such that for each $a \in S^{-1}\A$ such that $a^m \in S^{-1}M$ for all $m \ge 1$, 
there exists an $s \in S$ such that $(sa)^m \in \phi(M)$ for all $m \ge 1$.
Then $S^{-1}M$ is a $\vartheta$-Mathieu subspace over $S^{-1}R$ of $S^{-1} \A$. 

\item[\upshape(v)] For a specific $a$ as in {\upshape(iv)}, i.e.\@ 
$a^m \in S^{-1}M$ for all $m \ge 1$, an $s$ as in {\upshape(iv)} exists 
in case $s'a$ is integral over $R$ (not necessary co-integral) for some 
$s' \in S$. 

\end{enumerate}
\end{theorem}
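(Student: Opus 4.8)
The plan is to verify property~i) of Proposition~\ref{propmsubC} for the $S^{-1}R$-submodule $S^{-1}M$ of $S^{-1}\A$; this is an $S^{-1}R$-submodule precisely because $M$ is an $R$-submodule of $\A$. Two preliminary observations: $\ker\phi\subseteq\phi^{-1}(0)\subseteq\phi^{-1}(V)=M$, so that $\phi^{-1}(\phi(M))=M$; and $\phi(M)$ is stable under the action of $R$ on $S^{-1}\A$. Now fix $a\in S^{-1}\A$ with $a^m\in S^{-1}M$ for all $m\ge1$, together with $b,c\in S^{-1}\A$ satisfying $C_\vartheta(b,c)$; the task is to show $ba^mc\in S^{-1}M$ for $m\gg0$.

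The heart of the argument is to replace $a$ by an element of $\A$ all of whose powers lie in $M$, and this is where the integrality hypothesis is used. Since $a^m\in S^{-1}M\subseteq S^{-1}V$ for all $m\ge1$ (as $\phi(M)\subseteq V$), the hypothesis furnishes $s\in S$ with $sa$ integral over $R$. Because $sa$ commutes with the central image of $R$, the ring $R[sa]$ is commutative, and integrality of $sa$ makes it finitely generated as an $R$-module; hence so is $W:=(sa)R[sa]$. For every $m\ge1$ we have both $(sa)^m=s^ma^m\in S^{-1}M$ and $(sa)^m\in W$, so all the (finitely many) generators of $W$ lie in $S^{-1}M$, and clearing their denominators with a single $t\in S$ gives $tW\subseteq\phi(M)$, in particular $ts^ma^m=t(sa)^m\in\phi(M)$ for every $m\ge1$. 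The crucial feature is that one denominator $t$ serves all powers of $a$ at once: without the integrality hypothesis the denominators needed to push $a^m$ into $\phi(M)$ could grow with $m$, so this is the step I expect to be the main obstacle.

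It remains to clear the surviving denominators inside $\A$. Pick $u\in S$ and $x\in\A$ with $\phi(x)=ua$, and set $\beta:=(ts)x\in\A$, so that $\phi(\beta)=(tsu)a$. Using the $R$-stability of $\phi(M)$ together with $ts^ma^m\in\phi(M)$, one computes $\phi(\beta^m)=(tsu)^ma^m=t^{m-1}u^m(ts^ma^m)\in\phi(M)$, hence $\beta^m\in\phi^{-1}(\phi(M))=M$ for all $m\ge1$. Likewise choose $y,z\in\A$ and $v,w\in S$ with $\phi(y)=vb$, $\phi(z)=wc$ and $C_\vartheta(y,z)$, taking $y=1$ (resp.\ $z=1$) whenever $C_\vartheta$ forces $b=1$ (resp.\ $c=1$) and arbitrary lifts in the remaining coordinate, exactly as in the proof of Proposition~\ref{surjhom}.

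Since $M$ is a $\vartheta$-Mathieu subspace of $\A$ over $R$, Proposition~\ref{propmsubC}~i) yields $y\beta^mz\in M$ for $m\gg0$. Applying $\phi$ and pulling the central scalars to the front,
\[
\phi(y\beta^mz)=\phi(y)\,\phi(\beta)^m\,\phi(z)=v(tsu)^mw\cdot ba^mc\in\phi(M),
\]
and since $v(tsu)^mw\in S$ is a unit in $S^{-1}R$ we conclude $ba^mc\in S^{-1}M$ for $m\gg0$. By Proposition~\ref{propmsubC}, $S^{-1}M$ is a $\vartheta$-Mathieu subspace of $S^{-1}\A$ over $S^{-1}R$, as desired.
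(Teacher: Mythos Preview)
Your proof of part~iv) is correct and shares the paper's core idea: use integrality of $sa$ to manufacture a single element of $S$ that drives all powers $(sa)^m$ into $\phi(M)$ simultaneously, then apply the $\vartheta$-Mathieu hypothesis and clear the remaining denominators. The executions differ in two respects. First, the paper builds the uniform scalar as a product $s_1\cdots s_d\,s'$, where each $s_m$ is chosen so that $s_m a^m\in\phi(M)$ individually and $s'a$ is integral of degree~$d$; you obtain it more conceptually by noting that $(sa)R[sa]$ is a finitely generated $R$-module whose generators (powers of $sa$) all lie in $S^{-1}M$, so a single $t\in S$ clears every denominator at once. Second, and more structurally, the paper invokes part~iii) to transfer the Mathieu property to $\phi(M)\subseteq\phi(\A)$ and works there, whereas you lift $a$ back to $\beta\in\A$ with $\phi(\beta)=(tsu)a$ and apply the hypothesis on $M$ directly in $\A$; this makes your argument for~iv) self-contained rather than dependent on~iii). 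You do not address parts i)--iii), but the paper disposes of each in a single line (i) is trivial, ii) cites a pullback result, iii) is Proposition~\ref{surjhom} together with the identity $\phi^{-1}(\phi(M))=M$ that you already noted).
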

\iftrue

\begin{listproof}
\begin{enumerate}

\item[(i)] This follows from the trivial fact that an $S^{-1} R$-subspace is 
also an $R$-subspace.

\item[(ii)] This follows from \cite[Prop.\@ 2.5]{msub}.

\item[(iii)] Since $\phi^{-1}(\phi(\phi^{-1}(V))) = \phi^{-1}(V)$,
we have $\phi^{-1}(\phi(M)) = M$. Hence taking $V = \phi(M)$ in proposition \ref{surjhom}
gives the desired result. 

\item[(iv)]
Take any element $a \in S^{-1}\A$ such that $a^m \in S^{-1}M$ for all $m \ge 1$.
By assumption, there exists an $s \in S$ such that $(sa)^m \in \phi(M)$ 
for all $m \ge 1$. By (iii), we see that $\phi(M)$ is a $\vartheta$-Mathieu 
subspace over $R$ of $\phi(\A)$. Thus for all $b',c' \in \phi(\A)$ such that 
$C_{\vartheta}(b',c')$, we have $b' (sa)^m c' \in \phi(M)$ for all $m \gg 0$, 
where $C_{\vartheta}(b',c')$ is as in definition \ref{defmsubC}. 

Consequently, for all $b',c' \in \phi(\A)$ such that $C_{\vartheta}(b',c')$, we have
$b' a^m c' \in S^{-1}M$ for all $m \gg 0$. For all $b,c \in S^{-1}M$, there exists an 
$s' \in S$ such that $s' b, s' c \in \phi(\A)$. Using this fact as far as $b \ne 1 \ne c$,
we deduce that for all $b,c \in S^{-1}M$ such that $C_{\vartheta}(b,c)$, we have 
$b a^m c \in S^{-1}M$ for all $m \gg 0$ as well. Hence it follows from (i) of proposition 
\ref{propmsubC} that $S^{-1}M$ is a $\vartheta$-Mathieu subspace over $S^{-1}R$ of 
$S^{-1} \A$. 

\item[(v)]
Since $a^m \in S^{-1}M$, there exist $s_m \in S$ such that $s_m a^m \in \phi(M)$
for all $m \ge 1$. Consequently, $(s_1 s_2\cdots s_d s' a)^m \in \phi(M)$ 
for all $d \in \N$, all $m$ with $1 \le m \le d$ and all $s' \in S$.

By assumption, there exists an $s' \in S$, and a monic $f \in R[t]$, say of degree
$d$, such that $f(s'a) = 0$. Therefore $(s'a)^m \in 
R \cdot (s'a) + R \cdot (s'a)^2 + \cdots + R \cdot (s'a)^d$ follows inductively for all $m > d$.
By multiplication by $(s_1 s_2 \ldots s_d)^m$ on both sides, we see that
$(s_1 s_2\cdots s_d s'a)^m \in \phi(M)$ for all $m > d$ as well.
Thus $(s_1 s_2\cdots s_d s'a)^m \in \phi(M)$ for all $m \ge 1$,
i.e.\@ $s = s_1 s_2\cdots s_d s'$ suffices.
\qedhere

\end{enumerate}
\end{listproof}
\else

\noindent
The proof is left as an exercise to the reader.
\fi

\section{Uniform Mathieu subspaces}

In this section, we generalize results of section 4 of \cite{msub}, which
is entitled `Mathieu subspaces with algebraic radicals'. Hence you might
expect a section about Mathieu subspaces with co-integral radicals, but it
appears that such Mathieu subspaces are so-called uniform Mathieu subspaces,
see theorem \ref{strongE} below. 

\begin{definition}
\label{defsmsub}
Let $M$ be an $R$-subspace ($R$-sub\-mod\-ule) of an associative $R$-algebra $\A$.
Then we call $M$ a {\em uniform $\vartheta$-Mathieu subspace} of $\A$ if for all 
$a \in \A$ such that $a^m \in M$ for all $m \ge 1$, there exists an $N \in \N$ such that 
$(a^N)_{\vartheta} \subseteq M$.
\end{definition}

\begin{proposition} \label{smsubmsub}
If $M$ is a uniform $\vartheta$-Mathieu subspace of an associative $R$-algebra $\A$,
then $M$ is also a $\vartheta$-Mathieu subspace of $\A$.
\end{proposition}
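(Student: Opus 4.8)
The plan is to unwind the two definitions and observe that being a strong $\vartheta$-Mathieu subspace is a uniform-in-$m$ strengthening of the defining property of a $\vartheta$-Mathieu subspace. Concretely, I would start from an arbitrary $a \in \A$ with $a^m \in M$ for all $m \ge 1$ and arbitrary $b, c \in \A$ with $C_{\vartheta}(b,c)$, aiming to verify condition i) of proposition \ref{propmsubC}, namely $b a^m c \in M$ when $m \gg 0$. By definition \ref{defsmsub}, there is an $N \in \N$ with $(a^N)_{\vartheta} \subseteq M$. The point is then simply that for every $m \ge N$ the element $b a^m c = b a^{m-N} a^N c$ (or the appropriate one-sided version dictated by $C_{\vartheta}(b,c)$) lies in $(a^N)_{\vartheta}$, hence in $M$; so in fact $b a^m c \in M$ for all $m \ge N$, which is more than enough.

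The one genuinely fiddly point, and the place I would be most careful, is matching the shape of $(a^N)_{\vartheta}$ from definition \ref{coid} against the constraint $C_{\vartheta}(b,c)$ from definition \ref{defmsubC}, case by case. For $\vartheta = $ ``left'' we have $c = 1$ and $(a^N)_{\text{left}} = \A a^N$, so $b a^m c = (b a^{m-N}) a^N \in \A a^N$; symmetrically for ``right''. For ``two-sided'', $(a^N)_{\text{two-sided}} = \A a^N \A$ and $b a^m c = (b a^{m-N}) a^N c \in \A a^N \A$ with no constraint on $b, c$. For ``pre-two-sided'', $C_{\vartheta}(b,c)$ forces $1 \in \{b,c\}$, so $b a^m c$ is either $a^m c \in a^N \A$ or $b a^m \in \A a^N$, and $(a^N)_{\text{pre-two-sided}} = \A a^N + a^N \A$ contains both. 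Thus in every case $b a^m c \in (a^N)_{\vartheta} \subseteq M$ for $m \ge N$.

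Having checked this, the proof concludes by appealing to proposition \ref{propmsubC} i): since $M$ is by hypothesis an $R$-subspace and the required containment $b a^m c \in M$ for $m \gg 0$ has been established for every admissible triple $(a,b,c)$, $M$ is a $\vartheta$-Mathieu subspace of $\A$. There is essentially no obstacle here beyond the bookkeeping of the four cases of $\vartheta$; the whole content is that "$(a^N)_{\vartheta} \subseteq M$ for some $N$" trivially implies "$b a^m c \in M$ for $m \gg 0$". One could even note, as a byproduct, that this shows a strong $\vartheta$-Mathieu subspace satisfies the Mathieu condition with the explicit bound $m \ge N$ rather than merely $m \gg 0$.
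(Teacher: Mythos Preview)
Your proof is correct and follows essentially the same approach as the paper: both obtain $N$ with $(a^N)_{\vartheta} \subseteq M$ from the strong Mathieu hypothesis and then check case by case that for $m \ge N$ the required products land in $(a^N)_{\vartheta} \subseteq M$. The only cosmetic difference is that you phrase the verification through the unified constraint $C_{\vartheta}(b,c)$ and conclude via proposition \ref{propmsubC}, whereas the paper lists the four cases of $\vartheta$ explicitly and appeals directly to definition \ref{defmsub}.
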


\begin{proof}
Assume $M$ is a uniform $\vartheta$-Mathieu subspace of an associative $R$-algebra $\A$
and $a^m \in M$ for all $m \ge 1$. Then there exists an $N \in \N$ such that 
$(a^N)_{\vartheta} \subseteq M$, and we have the following when $m \ge N$:
\begin{enumerate}
\item[\upshape(i)] $ba^m \in M$ for all $b \in \A$, if $\vartheta =$ ``{\it left}\/'';
\item[\upshape(ii)] $a^mc \in M$ for all $c \in \A$, if $\vartheta =$ ``{\it right}\/'';
\item[\upshape(iii)] $ba^m, a^mc \in M$ for all $b,c \in \A$, if $\vartheta =$ ``{\it pre-two-sided}\/'';
\item[\upshape(iv)] $ba^mc \in M$ for all $b,c \in \A$, if $\vartheta =$ ``{\it two-sided}\/''.
\end{enumerate}
Hence $M$ is a $\vartheta$-Mathieu subspace of $\A$ on account of definition
\ref{defmsub}.
\end{proof}

\noindent
Notice that the difference between Mathieu subspace and uniform Mathieu subspaces is that for
uniform Mathieu subspaces, the number $N$ is the above proposition does not depend on the elements 
$b$ and/or $c$ of $\A$, as opposed to regular Mathieu subspaces.

The following propositions are analogs for uniform Mathieu subspaces of \cite[Prop.\@ 2.1]{msub}
and proposition \ref{propmsubC} respectively.

\begin{proposition}[following {\cite[Prop.\@ 2.1]{msub}} more or less] 
\label{propsmsub}
Let $M$ be an $R$-subspace ($R$-sub\-mod\-ule) of an associative $R$-algebra $\A$.
Then $M$ is a uniform $\vartheta$-Mathieu subspace of $\A$, if and only if 
for all $a \in \rd(M)$, $(a^N)_{\vartheta} \subseteq M$ for some $N \in \N$.
\end{proposition}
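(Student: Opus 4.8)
The plan is to prove Proposition \ref{propsmsub} along the same lines as Proposition \ref{propmsub}, exploiting the fact that the new definition only differs from Definition \ref{defsmsub} by replacing ``$a^m \in M$ for all $m \ge 1$'' with ``$a \in \rd(M)$''. The backward implication is immediate: if $a \in \A$ satisfies $a^m \in M$ for all $m \ge 1$, then in particular $a^m \in M$ for all $m \gg 0$, so $a \in \rd(M)$, and the hypothesis yields an $N \in \N$ with $(a^N)_{\vartheta} \subseteq M$; this is exactly what Definition \ref{defsmsub} requires. Hence $M$ is a strong $\vartheta$-Mathieu subspace.

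For the forward implication, suppose $M$ is a strong $\vartheta$-Mathieu subspace and take an arbitrary $a \in \rd(M)$. By definition of $\rd(M)$ there is an integer $k$ with $a^m \in M$ for all $m \ge k$. The element to feed into Definition \ref{defsmsub} is $a^k$: indeed $(a^k)^\ell = a^{k\ell} \in M$ for all $\ell \ge 1$ since $k\ell \ge k$. So by Definition \ref{defsmsub} there exists an $N_0 \in \N$ with $\big((a^k)^{N_0}\big)_{\vartheta} = (a^{kN_0})_{\vartheta} \subseteq M$. Setting $N := k N_0$ gives $(a^N)_{\vartheta} \subseteq M$, which is what we wanted for this $a$. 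Since $a \in \rd(M)$ was arbitrary, the characterization follows.

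I do not anticipate a serious obstacle here; the only point requiring a little care is making sure the trick of passing to a power of $a$ interacts correctly with the notion of $\vartheta$-ideal, i.e.\ that $(a^N)_{\vartheta}$ with $N = kN_0$ is literally the $\vartheta$-ideal (or, in the pre-two-sided case, $\A a^N + a^N \A$) generated by the element $a^N = (a^k)^{N_0}$, so that the conclusion $(a^{kN_0})_\vartheta \subseteq M$ obtained from the strong Mathieu subspace property applied to $a^k$ is exactly the conclusion sought for $a$. This is purely a matter of unwinding Definition \ref{coid} and presents no difficulty. One could also phrase both directions uniformly by noting that for $a \in \A$ the conditions ``$a^m \in M$ for all $m \ge 1$'' and ``$a \in \rd(M)$, witnessed by some power'' generate the same family of elements $\{a^N : (a^N)_\vartheta \subseteq M\}$ up to passing to powers, but the two-implication presentation above is cleaner.
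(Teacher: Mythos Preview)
Your proof is correct and follows essentially the same approach as the paper: for the forward direction you pass from $a \in \rd(M)$ to a power $a^k$ whose positive powers all lie in $M$, apply the definition of strong $\vartheta$-Mathieu subspace to $a^k$, and set $N = kN_0$; the backward direction is the immediate observation that $a^m \in M$ for all $m \ge 1$ implies $a \in \rd(M)$. The paper's proof is more terse but identical in substance.
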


\begin{proof}
In order to prove the `only-if'-part, assume that $M$ is a uniform 
$\vartheta$-Mathieu subspace of $\A$ and $a \in \rd(M)$. Then there exists
a $k \in \N$ such that $(a^k)^m \in M$ for all $m \ge 1$. Hence
$(a^{kN})_{\vartheta} = ((a^k)^N)_{\vartheta} \subseteq M$ for some $N \in \N$. 
This gives the `only-if'-part.

Since $a^m \in M$ for all $m \ge 1$ implies $a \in \rd(M)$, the `if'-part
follows as well.
\end{proof}

\begin{proposition} \label{propsmsubC}
Let $M$ be an $R$-subspace ($R$-sub\-mod\-ule) of an associative $R$-algebra $\A$.
Then $M$ is a uniform $\vartheta$-Mathieu subspace of $\A$, if and only if any of 
the following properties holds, where $C_{\vartheta}(b,c)$
is as in definition \ref{defmsubC}:
\begin{enumerate}

\item[\upshape(i)] for all $a$ such that $a^m \in M$ for all $m \ge 1$, we have the following
when $m \gg 0$: $b a^m c \in M$ for all $b,c \in \A$ such that $C_{\vartheta}(b,c)$;

\item[\upshape(ii)] for all $a \in \rd(M)$, we have the following
when $m \gg 0$: $b a^m c \in M$ for all $b,c \in \A$ such that $C_{\vartheta}(b,c)$.

\end{enumerate}
\end{proposition}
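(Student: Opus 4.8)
The plan is to prove Proposition~\ref{propsmsubC} by a circle of implications, using Proposition~\ref{propsmsub} as the pivot: it suffices to show that each of conditions i) and ii) is equivalent to the condition ``$(a^N)_\vartheta \subseteq M$ for some $N$ holds for every $a \in \rd(M)$'', which by Proposition~\ref{propsmsub} characterizes strong $\vartheta$-Mathieu subspaces. Since the condition defining a strong $\vartheta$-Mathieu subspace quantifies over $a$ with $a^m \in M$ for all $m \ge 1$, while Proposition~\ref{propsmsub} reformulates this in terms of $a \in \rd(M)$, I will route i) through ii) just as the corresponding non-strong statement (Proposition~\ref{propmsubC}) routes its parts through each other, taking powers of $a$ to pass between ``$a^m \in M$ for all $m \ge 1$'' and ``$a \in \rd(M)$''.

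First I would observe the easy direction: if $M$ is a strong $\vartheta$-Mathieu subspace then for any $a$ with $a^m \in M$ for all $m \ge 1$ there is an $N$ with $(a^N)_\vartheta \subseteq M$, so for $m \ge N$ and any $b,c$ with $C_\vartheta(b,c)$ we get $ba^mc = b a^{m-N} a^N c \in (a^N)_\vartheta \subseteq M$ (here one uses that $(a^N)_\vartheta$, as a $\vartheta$-ideal — or the sum of one-sided ideals in the pre-two-sided case — absorbs the relevant multiplications, exactly the computation already carried out in the proof of Proposition~\ref{smsubmsub}). This gives i), and running the same argument starting from $a \in \rd(M)$, after replacing $a$ by a suitable power $a^k$ so that $(a^k)^m \in M$ for all $m \ge 1$, gives ii).

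For the converse, the key point is the translation between the multiplicative condition ``$ba^mc \in M$ for $m \gg 0$, for all $b,c$ with $C_\vartheta(b,c)$'' and the ideal-containment condition ``$(a^N)_\vartheta \subseteq M$''. This is precisely what Theorem~\ref{th3.8}/Lemma~\ref{lm3.1} does, but only under the co-integrality hypothesis $a^N R[a] = a^{N+1}R[a]$, which is \emph{not} available here — and that is the main obstacle. However, for a strong Mathieu subspace we do not need co-integrality: if condition i) holds and $a^m \in M$ for all $m \ge 1$, then for some $m_0$ we have $a^{m_0} \in M$ and $b a^{m_0} c \in M$ for all $b,c$ with $C_\vartheta(b,c)$, and one checks directly that $(a^{m_0})_\vartheta$ is then spanned over $R$ (together with $\A$ on the appropriate side(s)) by elements of the form $b a^{m_0} c$ with $C_\vartheta(b,c)$ — this is the content of Definition~\ref{coid} unwound in each of the four cases, and is the same bookkeeping with $C_\vartheta$ used throughout the paper. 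Hence $(a^{m_0})_\vartheta \subseteq M$, so taking $N = m_0$ shows $M$ is a strong $\vartheta$-Mathieu subspace. The implication from ii) is identical after replacing $a$ by $a^k$ for suitable $k$, or alternatively one notes ii) trivially implies i) since $a^m \in M$ for all $m \ge 1$ forces $a \in \rd(M)$.

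The only subtlety to watch is that ``$ba^mc \in M$ for all $m \gg 0$'' a priori gives the conclusion only for large $m$, whereas to conclude $(a^N)_\vartheta \subseteq M$ with a uniform $N$ one wants a single exponent that works for all $b,c$ simultaneously; but the ``$m \gg 0$'' in i) and ii) is uniform in $b,c$ (it is stated outside the quantifier over $b,c$), so fixing one such large $m$ and setting $N = m$ suffices, and $a^N = a^m \in M$ as well since $a^m \in M$ for all $m \ge 1$ (or, in case ii), since $a \in \rd(M)$, for all $m \gg 0$, which after enlarging $N$ is again fine). I expect the write-up to be short: one paragraph for the two forward implications citing the computation of Proposition~\ref{smsubmsub}, and one paragraph for the converses citing Proposition~\ref{propsmsub} and unwinding $C_\vartheta$ and $(a^N)_\vartheta$ case by case as in Definition~\ref{coid}.
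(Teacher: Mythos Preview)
Your proposal is correct and follows essentially the same approach as the paper. The paper's proof is extremely compressed: it simply observes that Definition~\ref{defsmsub} is equivalent to condition~i) (this is the content of ``interchanging the quantification with $m$ with that of $b$ and/or $c$''), and then says that the full equivalence with ii) follows by the same pattern as Proposition~\ref{propmsubC} via Proposition~\ref{propsmsub}. You unpack exactly this argument explicitly, including the key translation between ``$ba^{m_0}c \in M$ for all $b,c$ with $C_\vartheta(b,c)$'' and ``$(a^{m_0})_\vartheta \subseteq M$'', which the paper leaves implicit. Your digression about Theorem~\ref{th3.8}/Lemma~\ref{lm3.1} and co-integrality is unnecessary (as you yourself note), but it does not affect correctness.
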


\begin{proof}
Comparing the proof of proposition \ref{smsubmsub} with definition \ref{defmsub},
we see that an alternative definition of uniform Mathieu subspace can be obtained
by interchanging the quantification with $m$ with that of $b$ and/or $c$ in the 
definition of Mathieu subspace as given in definition \ref{defmsub}.
Since this proposition and a possible proof differs accordingly from 
proposition \ref{propmsubC} and its proof respectively,
the desired result follows.
\end{proof}

\begin{remark} \label{smrem}
The proof of proposition \ref{propsmsubC} tells us how the proofs of several 
results about Mathieu subspaces can be turned into similar proofs for uniform
Mathieu subspaces. Results with an analog for uniform Mathieu subspaces that can 
be proved in this manner are \cite[Prop.\@ 2.5--Lm.\@ 2.8]{msub}, proposition
\ref{surjhom} and theorem \ref{locint}.
\end{remark}

\mathversion{bold}
\subsection{Definitions of $\G_{\vartheta}(\A)$ and $\E_{\vartheta}(\A)$}
\mathversion{normal}

In \cite{msub}, $\G(\A)$ is defined as the set of all $K$-subspaces
$V$ of $\A$ such that $\rd'(V) = \rd(V)$, where $K = R$ is a field. 
Before we give another definition of $\G(\A)$, we formulate a proposition.
Recall that for subsets $S$ of $\A$, $(S)_{\vartheta}$ is the $\vartheta$-ideal 
generated by $S$ when $\vartheta \ne$ ``{\it pre-two-sided}\/'', and 
$(S)_{\mbox{\scriptsize``{\it pre-two-sided}\/''}} = \A S + S \A$ is the sum of the
left and right ideals generated by $S$. 

\begin{proposition} \label{rdce}
Let $\A$ be an associative $R$-algebra and $V$ an $R$-subspace of $\A$. Then
$$
\rd'(V) \subseteq \rd\big((e \in V \mid e^2 = e)_{\vartheta}\big)
$$
\end{proposition}

\begin{proof}
Take $a \in \rd'(V)$. From proposition \ref{prop3.3}, it follows that there exist
an $N \in \N$ and an idempotent $e \in V$ such that 
$a^N = a^N e = e a^N \in (e)_{\vartheta}$. 
Hence $a \in \rd\big((e \in V \mid e^2 = e)_{\vartheta}\big)$.
\end{proof}

\noindent
By proposition \ref{rdce}, condition \eqref{Gdefeq} in the definition below is weaker than
the condition $\rd'(V) = \rd(V)$ in \cite{msub}.

\begin{definition}
Let $\A$ be an associative $R$-algebra and $\vartheta \ne$ {\it{``pre-two-sided''}}. 
Then we define $\G_{\vartheta}(\A)$ as the set of all $R$-subspaces $V$ of $\A$, such that
\begin{equation}
\rd(V) \subseteq \rd\big((e \in V \mid e^2 = e)_{\vartheta}\big) \label{Gdefeq}
\end{equation}
Since the pre-two-sided case is a combination of both one-sided cases, we simply define
$$
\G_{\mbox{\scriptsize``{\it pre-two-sided}\/''}}(\A) :=
\G_{\mbox{\scriptsize``{\it left}\/''}}(\A) \cap
\G_{\mbox{\scriptsize``{\it right}\/''}}(\A)
$$
Let $\E_{\vartheta}(\A)$ be the subset of $\vartheta$-Mathieu subspaces of $\G_{\vartheta}(\A)$.
\end{definition}

\noindent
We will show in corollary \ref{cor4.13}, which follows later, that in the commutative case, 
$\E_{\vartheta}(\A)$ is just the set of all $R$-subspaces $V$ of $\A$ for which we have equality 
in \eqref{Gdefeq}.

The following theorem gives another definition of $\G_{\vartheta}(\A)$
for the commutative case, namely $$\rd(V) \subseteq \rd\big((\rd'(V))_{\vartheta}\big)$$
instead of \eqref{Gdefeq}, because $\rd\big((e \in V \mid e^2 = e)_{\vartheta}\big)$ is an ideal 
when $\A$ is commutative.

\begin{theorem}
Let $\A$ be an associative $R$-algebra and $V$ an $R$-subspace of $\A$. Suppose that
$(e \in V \mid e^2 = e)_{\vartheta}$ is a $\vartheta$-Mathieu subspace 
(which is obviously the case when $\vartheta \neq$ {\it{``pre-two-sided''}}). 
Suppose additionally that either $\rd\big((e \in V \mid e^2 = e)_{\vartheta}\big)$ or 
$\rd'(V)$ is a $\vartheta$-ideal of $\A$. Then
$$
\rd\big((e \in V \mid e^2 = e)_{\vartheta}\big) = \rd\big((\rd'(V))_{\vartheta}\big)
$$
\end{theorem}

\begin{proof}
Let $E = (e \in V \mid e^2 = e)_{\vartheta}$. As $e^1 R[e] = e^2 R[e]$ for every 
idempotent $e$, it follows that $E \subseteq (\rd'(V))_{\vartheta}$, so
$$
\rd(E) \subseteq \rd\big((\rd'(V))_{\vartheta}\big)
$$
From proposition \ref{rdce} and (ii) of \cite[Lm.\@ 2.2]{msub}, we deduce that
$$
\rd'(V) \subseteq \rd(E) \qquad \mbox{and} \qquad \rd\big(\rd(E)\big) = \rd(E)
$$
respectively. If $\rd(E)$ is a $\vartheta$-ideal of $\A$, then 
$$
\rd(E) \subseteq \rd\big((\rd'(V))_{\vartheta}\big) \subseteq 
\rd\big((\rd(E))_{\vartheta}\big) = \rd\big(\rd(E)\big) = \rd(E)
$$
If $\rd'(V)$ is a $\vartheta$-ideal of $\A$, then
$$
\rd(E) \subseteq \rd\big((\rd'(V))_{\vartheta}\big) = \rd\big(\rd'(V)\big) \subseteq
\rd\big(\rd(E)\big) = \rd(E)
$$
So $\rd(E) = \rd\big((\rd'(V))_{\vartheta}\big)$ in both cases.
\end{proof}

\noindent
The next theorem gives another definition of $\E_{\vartheta}$.

\begin{theorem} \label{strongE}
Let $\A$ be an associative $R$-algebra. Then $\E_{\vartheta}(\A)$ is the subset of 
uniform $\vartheta$-Mathieu subspaces of $\G_{\vartheta}(\A)$.
\end{theorem}

\begin{proof}
The pre-two-sided case follows from both two-sided cases (take the largest of both $N$'s),
so assume that $\vartheta \ne$ {\it{``pre-two-sided''}}. Take any $V \in \E_{\vartheta}(\A)$ 
and let $a \in \rd(V)$. 

By definition of $\G_{\vartheta}(\A)$, we have 
$a^N \in (e \in V \mid e^2 = e)_{\vartheta}$ for some $N \in \N$. Since (iii) of either lemma 
\ref{lm2.9} or \cite[Lm.\@ 2.9]{msub} tells us that $(e)_{\vartheta} \subseteq V$ for each 
idempotent $e \in V$, we see that $(a^N)_{\vartheta} \subseteq V$. 
So $V$ is a uniform $\vartheta$-Mathieu subspace of $\A$.
\end{proof}

\begin{proposition} \label{coialg}
Let $\A$ be an associative $R$-algebra such that $\A = \rd'(\A)$.
Then each $R$-subspace of $\A$ is contained in $\G_{\vartheta}(\A)$ and
each $\vartheta$-Mathieu subspace of $\A$ is uniform.
\end{proposition}

\begin{proof}
Let $V$ be an $R$-subspace of $\A$. Then $\rd(V) = \rd'(V)$, and on account of 
proposition \ref{rdce}, we have $V \in \G_{\vartheta}(\A)$ by definition of $\G_{\vartheta}$. 
By definition of $\E_{\vartheta}$, it follows from theorem \ref{strongE} that each 
$\vartheta$-Mathieu subspace of $\A$ is uniform.
\end{proof}

\noindent
The rest of this section consists of generalizations of results of section 4 of \cite{msub}.
Just as before, we may omit proofs if those of the original results are already sufficient.
We start with a generalization of \cite[Lm.\@ 4.1]{msub}.

\begin{lemma}[generalizing {\cite[Lm.\@ 4.1]{msub}}] \label{lm4.1}
Let $\A$ be an associative algebra over an Artin ring $R$, and suppose that
$V$ is an $R$-subspace of $\A$. Then for
\begin{enumerate}
 
\item[\upshape(1)] $\A$ is integral (finite) over $R$;
  
\item[\upshape(2)] $V$ is integral (finite) over $R$;

\item[\upshape(3)] every element of $\rd(V)$ is integral over $R$;

\item[\upshape(4)] $\rd(V) = \rd'(V)$;

\item[\upshape(5)] $V \in G_{\vartheta}(\A)$;

\end{enumerate}
we have {\upshape(1)} $\Rightarrow$ {\upshape(2)} $\Rightarrow$ 
{\upshape(3)} $\Rightarrow$ {\upshape(4)} $\Rightarrow$ {\upshape(5)}. 
\end{lemma}

\begin{proof}
By \cite[Th.\@ 8.5]{amd} and \cite[Th.\@ 6.5]{amd}, finite $R$-modules are Noetherian.
By \cite[Prop.\@ 6.2]{amd}, Noetherian $R$-modules are finite. Hence we can replace
(finite) by (Noetherian) in (1) and (2). 
\begin{description}

\item[(1) \imp (2)]
Again by \cite[Prop.\@ 6.2]{amd} we obtain the Noetherian case of (1) $\Rightarrow$ (2).
The integral case of (1) $\Rightarrow$ (2) is trivial.

\item[(2) \imp (3)]
Assume that (2) holds and take any $a \in \rd(V)$. We must show that $a$ is integral over $R$, which 
is the same as that $a^m$ is integral over $R$ for some $m \ge 1$. We can take $m$ 
such that $a^m R[a^m]$ is a subspace of $V$. Now the integral case follows directly.
In the Noetherian case, $R[a^m] = R \cdot 1 + a^m R[a^m]$
is finite because of \cite[Prop.\@ 6.2]{amd}, and $a^m$ is integral over $R$ by \cite[Prop.\@ 5.1]{amd}.

\item[(3) \imp (4)]
This follows directly from theorem \ref{ArtinK}.

\item[(4) \imp (5)]
This follows from proposition \ref{rdce} and the definition of $G_{\vartheta}(\A)$.
\qedhere

\end{description}
\end{proof}

\mathversion{bold}
\subsection{Characterization of $M \in \E_{\vartheta}(\A)$ in terms of idempotents}
\mathversion{normal}

\begin{theorem}[following {\cite[Th.\@ 4.2]{msub}}] \label{th4.2}
Let $V \in \G_{\vartheta}(\A)$. Then $V \in \E_{\vartheta}(\A)$, if and only if 
$(e \in V \mid e^2 = e)_{\vartheta} \subseteq V$.
\end{theorem}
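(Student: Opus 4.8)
The plan is to prove the two implications separately, using Proposition~\ref{propsmsub} to reduce everything to a condition on elements of $\rd(V)$, and Corollary~\ref{strongcoint} to identify $\E_{\vartheta}(\A)$ with the set of \emph{strong} $\vartheta$-Mathieu subspaces lying in $\G_{\vartheta}(\A)$. The forward implication is immediate: if $V \in \E_{\vartheta}(\A)$ and $e \in V$ is an idempotent, then $e \in \rd(V)$ by Lemma~\ref{lm2.9}~iii), and since $V$ is a ($\vartheta$-)Mathieu subspace that same part of Lemma~\ref{lm2.9} gives $(e)_{\vartheta} \subseteq V$. So only the backward direction has content.

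For the backward direction, assume $V \in \G_{\vartheta}(\A)$ and that $(e)_{\vartheta} \subseteq V$ for every idempotent $e \in V$. By Corollary~\ref{strongcoint} it suffices to show $V$ is a strong $\vartheta$-Mathieu subspace, and by Lemma~\ref{stcomb} it suffices to verify the hypothesis there, namely that for each $a \in \rd'(V)$ there is an $N \in \N$ with $(a^N)_{\vartheta} \subseteq V$. So fix $a \in \rd'(V)$ and pick $N$ with $a^N R[a] = a^{N+1} R[a]$; apply Proposition~\ref{prop3.3} to get $p(t) \in t^N R[t]$ such that $e := p(a) \in V$ is an idempotent of $\A$ with $a^N = a^N e = e a^N$. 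By hypothesis $(e)_{\vartheta} \subseteq V$. The point is now that $a^N \in (e)_{\vartheta}$: indeed $a^N = b a^N c$ with $b = a^N$ suitably or, more directly, $a^N = e a^N e \in \A e \A$ and similarly $a^N = e a^N \in \A e$, $a^N = a^N e \in e \A$, so $a^N$ lies in whichever one-sided or two-sided ideal $(e)_{\vartheta}$ denotes. Hence $a^N \in V$, but more is needed: we want $(a^N)_{\vartheta} \subseteq V$, not just $a^N \in V$. For the two-sided case, $(a^N)_{\vartheta} = \A a^N \A \subseteq \A (e)_{\vartheta} \A \subseteq \A \A e \A \A = \A e \A = (e)_{\vartheta} \subseteq V$ since $(e)_{\vartheta}$ is itself a two-sided ideal; the left, right, and pre-two-sided cases are identical using $a^N = e a^N$, $a^N = a^N e$, or both. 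So $(a^N)_{\vartheta} \subseteq (e)_{\vartheta} \subseteq V$, as required.

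The main thing to get right, and the only genuine subtlety, is the containment $(a^N)_{\vartheta} \subseteq (e)_{\vartheta}$: this rests entirely on the identities $a^N = e a^N = a^N e$ from Proposition~\ref{prop3.3}~iii) together with the fact that $(e)_{\vartheta}$ is an honest (one- or two-sided, or sum-of-one-sided) ideal containing $e$, so it absorbs multiplication and hence contains $\A e$, $e\A$, or $\A e \A$ as appropriate. I would phrase the argument uniformly by noting that for each of the four choices of $\vartheta$, the defining identity $a^N = e a^N = a^N e$ places $a^N$ in $(e)_{\vartheta}$ and then, because $(e)_{\vartheta}$ is an ideal of the corresponding type, $(a^N)_{\vartheta} \subseteq (e)_{\vartheta}$. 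Once that is in hand, Lemma~\ref{stcomb} and Corollary~\ref{strongcoint} close the argument with no further work.
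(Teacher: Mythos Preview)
Your proof is correct and follows essentially the same route as the paper: the forward direction via Lemma~\ref{lm2.9}~iii), and the backward direction by reducing via Lemma~\ref{stcomb} to $a \in \rd'(V)$, producing the idempotent $e = p(a)$ from Proposition~\ref{prop3.3}, and deducing $(a^N)_{\vartheta} \subseteq (e)_{\vartheta} \subseteq V$ from the identities $a^N = e a^N = a^N e$. The only cosmetic point is that your phrase ``because $(e)_{\vartheta}$ is an ideal of the corresponding type'' is not quite right for $\vartheta =$ ``\emph{pre-two-sided}'' (where $(e)_{\vartheta}$ need not be a two-sided ideal), but your earlier case analysis using both $a^N = e a^N$ and $a^N = a^N e$ already handles this correctly, exactly as the paper does when it observes $a^N \in (p(a))_{\text{``\emph{left}''}} \cap (p(a))_{\text{``\emph{right}''}}$.
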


\begin{proof}
Just as in the proof of theorem \ref{strongE}, the pre-two-sided case follows by combining
both one-sided cases. So assume again that $\vartheta \neq$ {\it{``pre-two-sided''}}. 
The `only-if'-part follows from (iii) of either lemma \ref{lm2.9} or \cite[Lm.\@ 2.9]{msub}. 

In order to prove the `if'-part, assume that $(e \in V \mid e^2 = e)_{\vartheta} \subseteq V$,
and take any $a \in \rd(V)$. By definition of $\G_{\vartheta}(\A)$, we have 
$a^N \in (e \in V \mid e^2 = e)_{\vartheta}$ for some $N \in \N$. Hence 
$(a^N)_{\vartheta} \subseteq (e \in V \mid e^2 = e)_{\vartheta} \subseteq V$ by assumption. 
So $V$ is a uniform $\vartheta$-Mathieu subspace of $\A$.
\end{proof}

\begin{corollary}[similar to {\cite[Cor.\@ 4.3]{msub}}] \label{cor4.3}
Let $V \in \G_{\vartheta}(\A)$ such that $V$ does not contain any nonzero idempotent.
Then $V$ is a (uniform) $\vartheta$-Mathieu subspace of $\A$.
\end{corollary}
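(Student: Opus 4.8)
The plan is to derive Corollary \ref{cor4.3} directly from Theorem \ref{th4.2}, since the two statements are almost verbatim the same once one observes that the hypothesis ``$V$ contains no nonzero idempotent'' vacuously implies the characterizing condition in Theorem \ref{th4.2}.

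First I would invoke Theorem \ref{th4.2}: since $V \in \G_{\vartheta}(\A)$ by hypothesis, we have that $V \in \E_{\vartheta}(\A)$ if and only if $(e)_{\vartheta} \subseteq V$ for every idempotent $e \in V$. Now the only idempotent contained in $V$ is $0$ (here I use that $0$ is an idempotent but not a \emph{nonzero} one, so it is not excluded by the hypothesis), and $(0)_{\vartheta} = (0) \subseteq V$ trivially. Hence the condition of Theorem \ref{th4.2} is satisfied, so $V \in \E_{\vartheta}(\A)$; in particular $V$ is a $\vartheta$-Mathieu subspace of $\A$. For the parenthetical ``strong'' assertion, I would note that by Corollary \ref{strongcoint}, membership in $\E_{\vartheta}(\A)$ is the same as being a strong $\vartheta$-Mathieu subspace lying in $\G_{\vartheta}(\A)$, so $V$ is in fact a strong $\vartheta$-Mathieu subspace of $\A$, and every strong $\vartheta$-Mathieu subspace is a $\vartheta$-Mathieu subspace by Proposition \ref{smsubmsub}.

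There is essentially no obstacle here; the only point requiring a moment's care is the bookkeeping convention that $0 \in \A$ is an idempotent, so that the phrase ``does not contain any nonzero idempotent'' is precisely what makes the quantifier ``for every idempotent $e \in V$'' range over nothing but $e = 0$, for which $(e)_{\vartheta} \subseteq V$ is automatic. A short proof along these lines suffices:

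\begin{proof}
Since $V \in \G_{\vartheta}(\A)$, Theorem \ref{th4.2} applies. The only idempotent of $\A$ contained in $V$ is $0$, and $(0)_{\vartheta} = (0) \subseteq V$. Hence the condition of Theorem \ref{th4.2} is (vacuously) fulfilled, so $V \in \E_{\vartheta}(\A)$. By corollary \ref{strongcoint}, $V$ is a strong $\vartheta$-Mathieu subspace of $\A$, and hence a $\vartheta$-Mathieu subspace of $\A$ by proposition \ref{smsubmsub}.
\end{proof}
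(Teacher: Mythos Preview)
Your proof is correct and matches the paper's intended argument: the paper states this corollary immediately after Theorem \ref{th4.2} without proof, so it is meant to follow exactly as you describe, by noting that the idempotent condition of Theorem \ref{th4.2} is vacuously satisfied when $V$ contains no nonzero idempotent, whence $V \in \E_{\vartheta}(\A)$ and Corollary \ref{strongcoint} gives the ``strong'' part.
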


\noindent
Just as in \cite{msub}, let $I_{\vartheta,V}$ denote the largest 
$\vartheta$-ideal of $\A$ which is contained in $V$ in case 
$\vartheta \ne \mbox{``{\it pre-two-sided}\/''}$, and
$$
I_{\mbox{\scriptsize``{\it pre-two-sided}\/''},V} :=
I_{\mbox{\scriptsize``{\it left}\/''},V} +
I_{\mbox{\scriptsize``{\it right}\/''},V}
$$

\begin{proposition}[similar to {\cite[Prop.\@ 4.5]{msub}}]
Let $V \in \G_{\vartheta}(\A)$ such that $I_{\vartheta,V} = (0)_{\vartheta}$. 
Then $V$ is a (uniform) $\vartheta$-Mathieu 
subspace of $\A$, if and only if $V$ does not contain any nonzero idempotent.

Consequently, if $\rd'(\A) = \A$ and $\A$ has no proper $\vartheta$-ideals
other than $(0)_{\vartheta}$, then any proper $R$-subspace $M$ of $\A$ is a 
(uniform) $\vartheta$-Mathieu 
subspace of $\A$, if and only if $M$ does not contain any nonzero idempotent of $\A$.
\end{proposition}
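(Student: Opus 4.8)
The plan is to deduce the first statement from corollary~\ref{cor4.3} and lemma~\ref{lm2.9}~iii), and then to derive the ``Consequently'' part by checking that its hypotheses force those of the first statement. For the backward implication of the first statement I would only use corollary~\ref{cor4.3}: if $V$ contains no nonzero idempotent, then, since $V \in \G_{\vartheta}(\A)$, that corollary already gives that $V$ is a strong $\vartheta$-Mathieu subspace of $\A$ (so $I_{\vartheta,V} = (0)_{\vartheta}$ is not needed here). For the forward implication I would take a $\vartheta$-Mathieu subspace $V$ and an idempotent $e \in V$: the first sentence of lemma~\ref{lm2.9}~iii) gives $e \in \rd(V)$, and the second then gives $(e)_{\vartheta} \subseteq V$. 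When $\vartheta \ne \mbox{``{\it pre-two-sided}''}$, $(e)_{\vartheta}$ is a $\vartheta$-ideal contained in $V$, so $(e)_{\vartheta} \subseteq I_{\vartheta,V} = (0)_{\vartheta} = (0)$; when $\vartheta = \mbox{``{\it pre-two-sided}''}$, the left ideal $\A e$ and the right ideal $e\A$ are contained in $V$, whence $(e)_{\vartheta} = \A e + e\A \subseteq I_{\mbox{\scriptsize``{\it left}''},V} + I_{\mbox{\scriptsize``{\it right}''},V} = (0)$. Since $e \in (e)_{\vartheta}$ in every case, $e = 0$, so $V$ has no nonzero idempotent.

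For the ``Consequently'' part, let $M$ be a proper $R$-subspace of $\A$; by the first statement it suffices to prove $M \in \G_{\vartheta}(\A)$ and $I_{\vartheta,M} = (0)_{\vartheta}$. As $\rd(\A) = \A$, the assumption $\rd'(\A) = \rd(\A)$ means every element of $\A$ is co-integral over $R$; combined with the identity $\rd'(M) = \rd(M) \cap \rd'(\A)$ (immediate from definition~\ref{coid}) this gives $\rd'(M) = \rd(M)$. Given $a \in \rd(M) = \rd'(M)$, I would take $\B := R[a]$ and the single generator $a_1 := a$, which lies in $\rd'(M) \cap \B$; the trivial identity $a = 1 \cdot a_1 \cdot 1$ then exhibits $a$ as a $\B$-linear combination of elements of $\rd'(M) \cap \B$. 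Since everything in $R[a]$ commutes, any product $b_0 a b_1 \cdots a b_m$ with $b_i \in R[a]$ equals $(b_0 b_1 \cdots b_m) a^m$, and for the value $m = N$ this lies in $\A a^N \cap a^N \A \subseteq (a^N)_{\vartheta}$; as $N$ was arbitrary, $R[a]$ is $\vartheta$-friendly with respect to $a$ for every $\vartheta$, so $M \in \G_{\vartheta}(\A)$. Finally, as $M$ is proper and $\A$ has no proper $\vartheta$-ideal other than $(0)_{\vartheta}$, the largest $\vartheta$-ideal inside $M$ is $(0)_{\vartheta}$, i.e.\ $I_{\vartheta,M} = (0)_{\vartheta}$ (for $\vartheta = \mbox{``{\it pre-two-sided}''}$ apply this to the left and right ideals contained in $M$). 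Invoking the first statement with $V = M$ now gives the equivalence; ``strong'' in the backward direction is again furnished by corollary~\ref{cor4.3}, and in the forward direction by proposition~\ref{smsubmsub}.

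The routine part is the $\vartheta$-bookkeeping. The step I expect to be the main obstacle is the verification $M \in \G_{\vartheta}(\A)$ — specifically, recognizing that $\rd'(\A) = \rd(\A)$ forces $\rd'(M) = \rd(M)$ and that $R[a]$ is $\vartheta$-friendly with respect to $a$ — together with the uniform treatment of $\vartheta = \mbox{``{\it pre-two-sided}''}$, where $(e)_{\vartheta}$ and $I_{\vartheta,\cdot}$ are not genuine $\vartheta$-ideals but sums of a left and a right ideal, so one must argue through $\A e$, $e\A$ (resp.\ $I_{\mbox{\scriptsize``{\it left}''},\cdot}$, $I_{\mbox{\scriptsize``{\it right}''},\cdot}$) rather than a single ideal. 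Beyond this, no case distinction on $\vartheta$ is needed.
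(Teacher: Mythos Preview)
The paper does not include its own proof of this proposition; it is one of the results for which the author states that the proof of the original result in \cite{msub} carries over without essential change. Your argument is correct and is precisely the one the paper's surrounding material is set up to support: the backward implication is exactly corollary~\ref{cor4.3}, and the forward implication is exactly lemma~\ref{lm2.9}~iii) together with the definition of $I_{\vartheta,V}$, including your handling of $\vartheta = \mbox{``{\it pre-two-sided}''}$ via $I_{\mbox{\scriptsize``{\it left}''},V}$ and $I_{\mbox{\scriptsize``{\it right}''},V}$. Your derivation of the ``Consequently'' part is likewise what the paper intends (and your observation that $\B = R[a]$ witnesses $M \in \G_{\vartheta}(\A)$ when $\rd'(M)=\rd(M)$ is exactly the point made later in proposition~\ref{prop4.19}'s precursor, the unnumbered proposition just before it).
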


\begin{proof}
By proposition \ref{coialg}, we can just follow the proof of \cite[Prop.\@ 4.5]{msub}.
\end{proof}

\begin{corollary}[following {\cite[Cor.\@ 4.6]{msub}}]
Let $V$ be an $R$-subspace of an associative $R$-algebra $\A$ and 
$I_V = I_{\mbox{\scriptsize``{\it two-sided}\/''},V}$. Assume that $V \in \G(\A)$ or 
$V/I_V \in \G(\A/I_V)$, where $\G = \G_{\mbox{\scriptsize``{\it two-sided}\/''}}$. 
Then $V$ is a (uniform) Mathieu subspace of $\A$, if and only if $V/I_V$ does not 
contain any nonzero idempotent of the quotient $R$-algebra $\A/I_V$.
\end{corollary}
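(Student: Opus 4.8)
The plan is to deduce this corollary from the preceding proposition applied to the quotient algebra $\A/I_V$. First I would set $\vartheta = \mbox{``{\it two-sided}''}$ throughout and observe that, by the definition of $I_V = I_{\vartheta,V}$, the subspace $V/I_V$ is an $R$-subspace of $\A/I_V$ whose largest $\vartheta$-ideal is $(0)_{\vartheta}$; indeed any $\vartheta$-ideal of $\A/I_V$ contained in $V/I_V$ pulls back to a $\vartheta$-ideal of $\A$ contained in $V$ and containing $I_V$, hence equals $I_V$ by maximality. So $I_{\vartheta,V/I_V} = (0)_{\vartheta}$ in $\A/I_V$, and the hypothesis of the preceding proposition is met once we know $V/I_V \in \G_{\vartheta}(\A/I_V)$.

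Next I would handle the two alternative hypotheses. If $V/I_V \in \G_{\vartheta}(\A/I_V)$ is assumed directly, there is nothing to do. If instead $V \in \G_{\vartheta}(\A)$, I would show this descends to the quotient: writing $\pi: \A \to \A/I_V$ for the projection, one checks that $\pi(\rd(V)) \subseteq \rd(V/I_V)$ (since $a^m \in V$ gives $\pi(a)^m \in V/I_V$), that $\pi$ maps a co-integrality witness $a^N R[a] = a^{N+1} R[a]$ to $\pi(a)^N R[\pi(a)] = \pi(a)^{N+1} R[\pi(a)]$ so $\pi(\rd'(V)) \subseteq \rd'(V/I_V)$, and that $\pi$ of a $\vartheta$-friendly subalgebra $\B$ is $\vartheta$-friendly with respect to $\pi(a_i)$. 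One also needs that every element of $\rd(V/I_V)$ lifts: since $\pi$ is surjective and $I_V \subseteq V$, an element $\bar a$ with $\bar a^m \in V/I_V$ for all $m$ lifts to some $a$ with $a^m \in V$ for all $m$ (the coset $\bar a^m$ meeting $V$ forces $a^m \in V$ as $I_V \subseteq V$), so $a \in \rd(V)$, and then the $\B$-linear combination expressing $a$ projects to one expressing $\bar a$. Hence $V/I_V \in \G_{\vartheta}(\A/I_V)$ in this case too.

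Now I would invoke the preceding proposition with $\A$ replaced by $\A/I_V$ and $V$ replaced by $V/I_V$: it yields that $V/I_V$ is a $\vartheta$-Mathieu subspace of $\A/I_V$ if and only if $V/I_V$ contains no nonzero idempotent of $\A/I_V$. Finally I would translate between $V$ and $V/I_V$: since $I_V$ is a $\vartheta$-ideal of $\A$ contained in $V$, proposition \ref{surjhom} (applied to $\pi: \A \to \A/I_V$, noting $\pi^{-1}(V/I_V) = V$) gives that $V$ is a $\vartheta$-Mathieu subspace of $\A$ if and only if $V/I_V$ is one of $\A/I_V$; and corollary \ref{strongcoint} together with the membership in $\G_{\vartheta}$ upgrades this to the ``strong'' version, accounting for the parenthetical ``(strong)''. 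Chaining these equivalences gives exactly the stated biconditional.

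The main obstacle I anticipate is the descent of membership in $\G_{\vartheta}$ to the quotient — specifically, verifying that $\vartheta$-friendliness of $\B$ with respect to $a_i$ passes to $\pi(\B)$ with respect to $\pi(a_i)$, and that the lifting of radical elements through $\pi$ behaves well with $I_V \subseteq V$. These are essentially bookkeeping with the definitions, but they are the one nontrivial point; everything else is a direct appeal to the preceding proposition, proposition \ref{surjhom}, and corollary \ref{strongcoint}. (A minor subtlety is that proposition \ref{surjhom} as stated concerns $\vartheta$-Mathieu subspaces, so for the strong version one must instead use the strong analog of \ref{surjhom} flagged in remark \ref{smrem}, or rerun its short proof with the quantifiers on $m$ and $b,c$ interchanged.)
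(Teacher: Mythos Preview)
Your proposal is correct and follows essentially the same route as the paper: reduce to the quotient $\A/I_V$ (where the preceding proposition applies because $I_{\vartheta,V/I_V}=(0)_\vartheta$), then transfer the (strong) Mathieu property back to $V$ via proposition~\ref{surjhom} and its strong analog from remark~\ref{smrem}. The paper's own proof is extremely terse---it delegates the quotient argument to ``a similar manner as in the proof of \cite[Cor.~4.6]{msub}'' and then cites exactly \cite[Prop.~2.7]{msub} and remark~\ref{smrem}---so your write-up simply unpacks what the paper leaves implicit, including the descent of $\G_\vartheta$-membership along $\pi$, which is indeed the only point requiring care.
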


\begin{proof}
By remark \ref{smrem}, we can just follow the proof of \cite[Cor.\@ 4.6]{msub},
provided that we can prove that $V \in \G(\A)$ implies $V/I_V \in
\G(\A/I_V)$. So let $V \in \G(\A)$. Since
$I_V \subseteq V$, we have 
$$
a^m \in V \Longleftrightarrow\, (a + I_V)^m = a^m + I_V \in V/I_V
$$
Hence $\rd(V) / I_V = \rd(V/I_V)$. The forward implication still holds when we replace
$V$ by any $E \subseteq \A$, so $\rd(E) / I_V \subseteq \rd(E/I_V)$ for any $E \subseteq \A$.
Now take $E = (e \in V \mid e^2 = e)_{\vartheta}$. Then $E/I_V \subseteq 
(e \in V/I_V \mid e^2 = e)_{\vartheta}$. Since $\rd(V) \subseteq \rd(E)$ by definition of $\G$,
we can conclude that
$$
\rd(V/I_V) = \rd(V) / I_V \subseteq \rd(E) / I_V \subseteq \rd(E/I_V) 
\subseteq \rd \big((e \in V/I_V \mid e^2 = e)_{\vartheta}\big)
$$
so that $V/I_V \subseteq \G(\A/I_V)$ by definition of $\G$.
\end{proof}

\begin{proposition}[following {\cite[Prop.\@ 4.7]{msub}} more or less]
Assume that $R$ is local and integrally closed in $\A$. Then every
$V \in \G_{\vartheta}(\A)$ such that $1 \notin V$ is a (uniform) 
$\vartheta$-Mathieu subspace of $\A$.
\end{proposition}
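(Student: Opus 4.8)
The plan is to reduce the statement to Corollary \ref{cor4.3} by showing that a proper $R$-subspace $V \in \G_{\vartheta}(\A)$ with $1 \notin V$ cannot contain any nonzero idempotent. So suppose, for contradiction, that $e \in V$ is a nonzero idempotent. The goal is to derive $1 \in V$, contradicting the hypothesis.

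First I would analyze $e$ using the local hypothesis on $R \cdot 1$. Since $e$ is an idempotent, it satisfies the monic polynomial $t^2 - t = t(t-1)$ over $R$, so $e$ is integral over $R$; since $R \cdot 1$ is integrally closed in $\A$, we get $e \in R \cdot 1$, say $e = r \cdot 1$ with $r \in R$. Then $r^2 \cdot 1 = e^2 = e = r \cdot 1$, so $r^2 - r$ annihilates $1$ in $\A$; I would need to be slightly careful here, but the natural reading is that $r^2 = r$ in $R$ (or at least that $r(r-1)$ acts as zero, which is enough for what follows). Now $r$ and $1-r$ are complementary idempotents of the local ring $R \cdot 1$ (or of $R$ via the structure map), and in a local ring the only idempotents are $0$ and $1$. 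Since $e \neq 0$ we have $r \neq 0$, hence $r = 1$ and therefore $e = 1 \cdot 1 = 1 \in V$, the desired contradiction.

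The alternative route, if one does not want $R \cdot 1$ itself to be local but only wants to use the weaker-looking hypothesis, is to invoke Lemma \ref{lm3.6}: "local" gives "no nontrivial idempotents" for $R \cdot 1$ as a ring, and a nonzero idempotent $e \in R \cdot 1$ that is not $1$ would be nontrivial, contradiction, so again $e = 1$. Either way the conclusion is that $V$ contains no nonzero idempotent. Having established that, Corollary \ref{cor4.3} applies directly: a member of $\G_{\vartheta}(\A)$ containing no nonzero idempotent is a (strong) $\vartheta$-Mathieu subspace of $\A$, which is exactly what we want.

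The main obstacle I anticipate is the bookkeeping around $R \cdot 1$ versus $R$: the hypothesis "$R \cdot 1$ is local and integrally closed in $\A$" concerns the image of $R$ in $\A$, and one must make sure the integrality argument ($e$ integral over $R$ $\Rightarrow$ $e \in R \cdot 1$) and the idempotent-killing argument ($r^2 = r$) are being run in the right ring, especially when the structure map $R \to \A$ is not injective. None of this is deep, but it is the only place where care is genuinely required; everything else is a direct citation of Corollary \ref{cor4.3} together with the observation that idempotents are integral over any base.
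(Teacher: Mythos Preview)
Your proposal is correct and follows essentially the same approach as the paper: show that any idempotent of $\A$ is integral over $R\cdot 1$ and hence lies in $R\cdot 1$, then use the local hypothesis (via Lemma~\ref{lm3.6}) to conclude that the only idempotents are $0$ and $1$, so $V$ contains no nonzero idempotent and Corollary~\ref{cor4.3} applies. The paper phrases this as a direct argument rather than by contradiction and invokes Lemma~\ref{lm3.6} with $\A = R\cdot 1$ explicitly, but the content is identical; your caution about $R$ versus $R\cdot 1$ is exactly the right place to be careful, and working in $R\cdot 1$ throughout, as the paper does, dissolves that worry.
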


\begin{proof}
Since $R$ is integrally closed in $\A$ and all idempotents 
of $\A$ are integral over $R$, we see that all idempotents of $\A$ 
must lie inside $R \cdot 1 \subseteq \A$. But on account of (i) of \cite[Prop.\@ 1.6]{amd},
$R \cdot 1$ is a local ring. Hence we deduce from lemma \ref{lm3.6} that $\A$ has no
idempotents other than $0$ and $1$.

So if $1 \notin V$, then $V$ does not contain any nonzero idempotent. 
Hence the desired result follows from corollary \ref{cor4.3}.
\end{proof}

\mathversion{bold}
\subsection{Posets of idempotents of $R$-subspaces of $\A$}
\mathversion{normal}

If $K$ is a field, and $\A \ni a$ is a $K$-algebra, then it is clear that $a$ is a 
quasi-idempotent, if and only if $Ka$ contains a nonzero idempotent $e$.
So (2) of \cite[Prop.\@ 4.8]{msub} is equivalent to that 
\begin{enumerate}

\item[(2$'$)] $Ka$ does not have a nonzero idempotent and 
$(e)_{\vartheta} \subseteq Ka$ for every idempotent $e$ of $Ka$.

\end{enumerate}
If (2) (or (2$'$)) does not hold, then $Ka$ contains a nonzero idempotent 
$e \in Ka$, which is unique, and $(e)_{\vartheta} = (a)_{\vartheta}$. 
So (1) of \cite[Prop.\@ 4.8]{msub} can be replaced by 
\begin{enumerate}

\item[(1$'$)] $Ka$ does have a nonzero idempotent and 
$(e)_{\vartheta} \subseteq Ka$ for every idempotent $e$ of $Ka$.

\end{enumerate}
Since $Ka$ has at most one nonzero idempotent, the idempotens of $Ka$
commute with one another.
Hence the following proposition, which additionally shows that $a$ is central in $\A$ 
in case $Ka$ is a (pre-)two-sided Mathieu subspace and $a$ is a quasi-idempotent, is 
indeed a generalization of \cite[Prop.\@ 4.8]{msub}.

\begin{proposition}[generalizing {\cite[Prop.\@ 4.8]{msub}}] \label{prop4.8}
Let $\A$ be an associative $R$-algebra.
Suppose that $V$ is an Artinian $R$-subspace of $\A$, whose idempotents commute
with one another, and let $E := (e \in V \mid e^2 = e)_{\vartheta}$.

Then $V$ is a (uniform) $\vartheta$-Mathieu subspace of $\A$, if and only if $E \subseteq V$.
Furthermore, we have the following if $V$ is indeed a $\vartheta$-Mathieu subspace.
\begin{enumerate}

\item[\upshape(i)] $E$ is a $\bar{\vartheta}$-unital associative algebra over $R$ (with inherited ring 
          operations and a multiplicative $\bar{\vartheta}$-identity that differs from that of $\A$), 
          where $\bar{\vartheta}$ is $\vartheta$ with {\it{``left''}} and {\it{``right''}} interchanged.

\item[\upshape(ii)] If $V$ is a (pre-)two-sided Mathieu subspace of $\A$, then the idempotents
           of $V$ are central in $\A$, and hence $E$ does not depend on the choice of 
           $\vartheta$. In particular, $V$ is a two-sided Mathieu subspace and $E$ is a unital 
           Abelian ring (with inherited ring operations 
           and a multiplicative identity that differs from that of $\A$) in that case.

\end{enumerate}
\end{proposition}

\begin{proof}
Assume that $a \in \rd(V)$, say that $a^m \in V$ for all $m \ge N$. Since the co-integrality of
$a$ is just the decending chain condition on
$$
Ra^N + Ra^{N+1} + Ra^{N+2} + \cdots \supseteq Ra^{N+1} + Ra^{N+2} + \cdots 
\supseteq Ra^{N+2} + \cdots \supseteq \cdots
$$
we see that $a \in \rd'(V)$. Hence $\rd(V) = \rd'(V)$, and by (4) $\Rightarrow$ (5) of lemma \ref{lm4.1}, 
we have $V \in \G_{\vartheta}(\A)$. Therefore, it follows from theorem \ref{th4.2} that $V$ is 
a (uniform) $\vartheta$-Mathieu subspace of $\A$, if and only if $E \subseteq V$. 
So it remains to prove (i) and (ii).
\iftrue
\begin{enumerate}

\item[(i)]
We only need to prove the case $\vartheta =$ {\it{``left''}}, because the case 
$\vartheta =$ {\it{``right''}} is similar and the other two cases follow from (ii). 
So assume that $\vartheta =$ {\it{``left''}}.

Notice that for each idempotent $e$ of $\A$, $\bar{e} := 1 - e$ is another idempotent of $\A$, and
we have $e \bar{e} = 0$.
On account of Zorn's lemma and the descending chain condition of $R$-subspaces of $V$, we can choose 
an idempotent $e \in V$ such that $(\bar{e})_{\vartheta} \cap V$ is minimal. Now take an arbitrary
idempotent $e' \in V$. Then $e'' := e + e' - e e'$ is contained in $V$, and $e''$ is an idempotent
because
$$
\bar{e}'' = \bar{e}\bar{e}' = \bar{e}'\bar{e}
$$
Furthermore,
$(\bar{e}')_{\vartheta} \cap V \supseteq (\bar{e}'')_{\vartheta} \cap V \subseteq 
(\bar{e})_{\vartheta} \cap V$,
and the minimality of $(\bar{e})_{\vartheta} \cap V$ tells us
that $(\bar{e}')_{\vartheta} \cap V \supseteq (\bar{e}'')_{\vartheta} \cap V = 
(\bar{e})_{\vartheta} \cap V$. In particular,
$$
(\bar{e}')_{\vartheta} \supseteq (\bar{e}'')_{\vartheta} \cap V \supseteq 
(\bar{e})_{\vartheta} \cap(e')_{\vartheta}
$$
Consequently, $\bar{e}e' = a \bar{e}'$ for some $a \in \A$, and multiplication by $e'$ gives 
$\bar{e}e' = 0$. Thus $e e' = (1 - \bar{e}) e' = e' - \bar{e}e'  = e'$.
Since $e'$ was arbitrary, we have $e' = e' e = e e'$ for all idempotents $e' \in V$. 
Using that $E$ is a left ideal generated by elements with respect to which $e$ is 
a right identity, we obtain that $E$ is right-unital.  

\item[(ii)] 
Assume that $V$ is a pre-two-sided Mathieu subspace of $\A$. Take any idempotent $e' \in V$ 
and take $a \in \A$ arbitrary. Then $e' + \bar{e}'ae'$ is an idempotent as well,
and by taking $\vartheta =$ {\it{``left''}}, we see that $e' + \bar{e}'ae' \in V$, too.
Since both $e'$ and $e' + \bar{e}' a e'$ are idempotents of $V$, we have
$$
e' + \bar{e}' a e' = (e' + \bar{e}' a e')e' = e'(e' + \bar{e}' a e') = e'
$$
Hence $\bar{e}' a e' = 0$. Adding $e'ae'$ gives $ae' = e'ae'$ and $e'ae' = e'a$ follows in a 
similar manner. 

So every idempotent $e' \in V$ is central in $\A$, and therefore $E$ does not depend on 
$\vartheta$. In particular, $E$ is a two-sided ideal of $\A$, so $V$ has to be a 
(uniform) two-sided Mathieu subspace of $\A$. Furthermore, the right identity $e$ of $E$ that 
we get by taking $\vartheta =$ {\it{``left''}} in (i), is a two-sided multiplicative identity 
of $E$. 
\qedhere

\end{enumerate}
\else
This is left as an exercise to the reader. \qedhere
\fi
\end{proof}

\noindent
\iftrue
It is well-known that the idempotents of unital rings are central once they commute relatively
(or with all nilpotent elements). The proof of that is based on the idempotence of $e' + \bar{e}' a e'$
(the nilpotence of $\bar{e}' a e'$), so the idea to use that idempotent in the above proof was obvious.

\fi
The assumption that the idempotents of $V$ in proposition \ref{prop4.8} commute relatively 
ensures that they form 
a lattice with respect to $e \wedge e' := ee'$, $e \vee e' := e + e' - ee'$, and $(e \le e') := 
(e = ee' = e'e)$ (the idempotent property is just the reflexivity of $\le$).
\iftrue
From the above proof, we can deduce that in some cases, that lattice has a 
top element.

To obtain the existence of a top element, it is however not needed to make any assumptions on a certain
lattice structure on the poset of idempotents whose ordering is given by $(e \le e') := (e = ee' = e'e)$.
More generally, one can even 
\else
The reader may 
\fi
show that the lattice of idempotents of $V$ must be complete, by proving the following.

\begin{proposition}
Let $\A$ be an associative $R$-algebra and $L$ be a set of idempotents of
$\A$, which is a poset with respect to $(e \le e') := (e = ee' = e'e)$. 
Suppose that every commutative $R$-subspace, generated by the multiplicative closure
of a subset of $L$, is either Noetherian or Artinian over $R$. Then we have the following.
\begin{enumerate}

\item[\upshape(i)] Every chain of $L$ has both a minimum and a maximum element.

\item[\upshape(ii)] If $L$ admits a lattice structure, then 
for every $S \subseteq L$ there exist a finite $S' \subseteq S$ such that
$\bigwedge S = \bigwedge S'$ and $\bigvee S = \bigvee S'$.
In particular, every lattice structure over $L$ is complete.

\end{enumerate}
\end{proposition}
\iftrue

\begin{proof}
We only prove the Noetherian case here. Using ideas in the proof of \ref{prop4.8}, the reader may 
treat the Artinian case himself. Again, we write $\bar{e} := 1 - e$ for idempotents $e$.
\begin{enumerate}

\item[(i)] Let $S$ be a chain of $L$. Since $e \le e'$ implies $e e' = e' e$, we see
that $R[S]$ is commutative. Consequently, $V := S R[S]$ is Noetherian over $R$ and hence over
$R[S]$ as well by assumption. So we can take $e^{\vee}, e^{\wedge} \in S$ such that 
$e^{\vee} V$ and $\bar{e}^{\wedge}V$ are maximal.

Suppose that there exists an $e \in S$ such that $e \nless e^{\vee}$. Then $e \ge e^{\vee}$, thus
$e e^{\vee} = e^{\vee}$. Hence $e V \supseteq e^{\vee} V$, so $e V = e^{\vee} V$
by definition of $e^{\vee}$. Multiplication of $e V = e^{\vee} V$ by $\bar{e}^{\vee}$ and 
$\bar{e}$ respectively gives $\bar{e}^{\vee} e V = 0 = \bar{e} e^{\vee} V$. By taking the elements
$e$ and $e^{\vee}$ of $V$ respectively, we see that $\bar{e}^{\vee} e = 0 = \bar{e} e^{\vee}$.
Adding $e^{\vee}e$ subsequently gives $e = e^{\vee}$. Hence $e \leq e^{\vee}$ for all $e \in S$, i.e.\@ 
$e^{\vee}$ is maximum.

Suppose next that there exists an $e \in S$ such that $e \ngtr e^{\wedge}$. Then $e \le e^{\wedge}$, 
thus $e = e e^{\wedge}$. Multiplication by $\bar{e}^{\wedge}$ gives $e\bar{e}^{\wedge} = 0$,
and adding $\bar{e}\bar{e}^{\wedge}$ subsequently gives $\bar{e}^{\wedge} = \bar{e}\bar{e}^{\wedge}$.
Consequently, $\bar{e}^{\wedge}V \subseteq \bar{e}V$, so $\bar{e}^{\wedge}V = \bar{e}V$ by definition of 
$e^{\wedge}$. Multiplication of $\bar{e}^{\wedge}V = \bar{e}V$ by $e$ and $e^{\wedge}$ respectively
gives $\bar{e}^{\wedge}eV = 0 = \bar{e}e^{\wedge}V$. Now a similar argument as in the previous paragraph
tells us that $e = e^{\wedge}$. Hence $e \geq e^{\wedge}$ for all $e \in S$, i.e.\@ $e^{\wedge}$ is minimum.

\item[(ii)] Assume that $L$ admits a lattice structure. Take any subset $S$ of $L$.
On account of (i) and Zorn's lemma, we can choose $e^{\wedge}, e^{\vee} \in S$ which are minimal
and maximal respectively. If $S$ is closed under dyadic $\wedge$ or $\vee$, then
$e^{\wedge}$ and $e^{\vee}$ have to be minimum and maximum respectively in $S$ 
(see the proof of proposition \ref{prop4.8}). In general, we can take the closure $\bar{S}$
of $S$ under dyadic $\wedge$ or $\vee$, and obtain that $e^{\wedge}$ and $e^{\vee}$ are a 
finite meet and join of elements of $S$ respectively (just like any element of $\bar{S}$). 
Furthermore, we can take for $S'$ the union of both underlying finite sets. \qedhere

\end{enumerate}
\end{proof}
\fi

\mathversion{bold}
\subsection{Radicals of uniform $\vartheta$-Mathieu subspaces in terms of 
radicals of $I_{\vartheta,M}$}
\mathversion{normal}

\begin{lemma}[following {\cite[Lm.\@ 4.9]{msub}}] \label{lm4.9}
Let $\A$ be an associative $R$-algebra and $M$ a $\vartheta$-Mathieu subspace of $\A$. Then
$\rd'(M) = \rd'(I_{\vartheta,M})$.

If $M$ is even a uniform $\vartheta$-Mathieu subspace of $\A$, then
$\rd(M) = \rd(I_{\vartheta,M})$.
\end{lemma}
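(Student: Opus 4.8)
The plan is to prove both identities by establishing the non-trivial inclusions $\rd'(M) \subseteq \rd'(I_{\vartheta,M})$ and, under the strong hypothesis, $\rd(M) \subseteq \rd(I_{\vartheta,M})$; the reverse inclusions $\rd(I_{\vartheta,M}) \subseteq \rd(M)$ and $\rd'(I_{\vartheta,M}) \subseteq \rd'(M)$ are immediate from $I_{\vartheta,M} \subseteq M$ together with the definitions of $\rd$ and $\rd'$. Since co-integrality over $R$ is a property of an element alone and does not refer to $M$, once we know that $a \in \rd'(M)$ forces $a \in \rd(I_{\vartheta,M})$, membership in $\rd'(I_{\vartheta,M})$ comes for free; so in both parts it suffices to push an element of the relevant radical of $M$ into the corresponding radical of $I_{\vartheta,M}$.

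For the first part, take $a \in \rd'(M)$ and choose $N$ with $a^N R[a] = a^{N+1}R[a]$. By theorem \ref{th3.8}, $(a^N)_{\vartheta} \subseteq M$. For $\vartheta \ne \mbox{``{\it pre-two-sided}''}$, $(a^N)_{\vartheta}$ is an actual $\vartheta$-ideal of $\A$ contained in $M$, hence contained in the largest such one, namely $I_{\vartheta,M}$. As $\A$ is unital, $a^m \in (a^N)_{\vartheta}$ for every $m \ge N$ (write $a^m = a^{m-N}a^N$, on the appropriate side), so $a^m \in I_{\vartheta,M}$ for all $m \ge N$, i.e.\ $a \in \rd(I_{\vartheta,M})$, and therefore $a \in \rd'(I_{\vartheta,M})$. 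For the second part, replace theorem \ref{th3.8} by proposition \ref{propsmsub}: if $M$ is a strong $\vartheta$-Mathieu subspace and $a \in \rd(M)$, then $(a^N)_{\vartheta} \subseteq M$ for some $N$, and the same containment argument gives $a^m \in I_{\vartheta,M}$ for $m \ge N$, hence $a \in \rd(I_{\vartheta,M})$.

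The one point needing care is the case $\vartheta = \mbox{``{\it pre-two-sided}''}$, because there $(a^N)_{\vartheta} = \A a^N + a^N \A$ is not a single ideal. Here I would split it: $\A a^N$ is a left ideal contained in $M$, hence in $I_{\mbox{\scriptsize``{\it left}''},M}$, and $a^N \A$ is a right ideal contained in $M$, hence in $I_{\mbox{\scriptsize``{\it right}''},M}$, so $(a^N)_{\vartheta} \subseteq I_{\mbox{\scriptsize``{\it left}''},M} + I_{\mbox{\scriptsize``{\it right}''},M} = I_{\vartheta,M}$; and $a^m = a^{m-N}a^N \in \A a^N$ for $m \ge N$. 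Thus this case is handled by the same mechanism, and I expect it to be the only genuine subtlety in the argument — everything else is bookkeeping with the definition of $I_{\vartheta,M}$ and the two cited results.
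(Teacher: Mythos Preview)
Your argument is correct and is exactly the approach the paper has in mind: the paper's proof simply points to \cite[Lm.\@ 4.9]{msub} and, for the strong case, says to replace the step producing $(a^N)_{\vartheta}\subseteq M$ by the definition of a strong $\vartheta$-Mathieu subspace (proposition \ref{propsmsub}), which is precisely what you do. Your explicit treatment of the ``{\it pre-two-sided}'' case via $I_{\mbox{\scriptsize``{\it left}''},M}+I_{\mbox{\scriptsize``{\it right}''},M}$ is the intended unpacking of the definition of $I_{\vartheta,M}$.
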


\begin{proof}
The claim $\rd'(M) = \rd'(I_{\vartheta,M})$ for $\vartheta$-Mathieu subspaces 
$M$ of $\A$ follows from the last claim of theorem \ref{th3.89}. 
The last assertion of lemma \ref{lm4.9} follows in a similar manner from the 
definition of uniform $\vartheta$-Mathieu subspace.
\end{proof}

\begin{theorem}[following {\cite[Th.\@ 4.10]{msub}} more or less] \label{th4.10}
Let $M$ be an $R$-subspace of $\A$. Then the following statements are equivalent.
\begin{enumerate}

\item[\upshape(1)] $M$ is a uniform $\vartheta$-Mathieu subspace of $\A$,

\item[\upshape(2)] $\rd(M) = \rd(I_{\vartheta,M})$.

\item[\upshape(3)] For every $R$-subspace $V$ of $\A$ such that 
$I_{\vartheta,M} \subseteq V \subseteq M$, $V$ is a uniform 
$\vartheta$-Mathieu subspace of $\A$ and $\rd(V) = \rd(I_{\vartheta,M})$.

\end{enumerate}
\end{theorem}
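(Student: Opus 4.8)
The plan is to prove the equivalence of (1) and (2) first, and then to deduce the concluding assertion from it by a sandwiching argument. The implication (1) $\Rightarrow$ (2) is exactly the second claim of lemma \ref{lm4.9}. For (2) $\Rightarrow$ (1) I would invoke proposition \ref{propsmsub}, which reduces the problem to finding, for each $a \in \rd(M)$, an $N \in \N$ with $(a^N)_{\vartheta} \subseteq M$. Given such an $a$, statement (2) gives $a \in \rd(I_{\vartheta,M})$, hence $a^N \in I_{\vartheta,M}$ for some $N$. If $\vartheta \ne \mbox{``{\it pre-two-sided}''}$, then $I_{\vartheta,M}$ is by definition a $\vartheta$-ideal of $\A$, so $(a^N)_{\vartheta} \subseteq I_{\vartheta,M} \subseteq M$ and we are done.

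The case $\vartheta = \mbox{``{\it pre-two-sided}''}$ is where I expect the real work. I would first record the elementary fact that $M$ is a strong pre-two-sided Mathieu subspace of $\A$ if and only if it is both a strong left and a strong right Mathieu subspace: one direction follows from $(a^N)_{\mbox{\scriptsize``{\it left}''}}, (a^N)_{\mbox{\scriptsize``{\it right}''}} \subseteq (a^N)_{\mbox{\scriptsize``{\it pre-two-sided}''}}$, and for the converse, if $\A a^{N_1} \subseteq M$ and $a^{N_2}\A \subseteq M$ then $N := \max(N_1,N_2)$ works, since $\A a^N = \A a^{N-N_1}a^{N_1} \subseteq \A a^{N_1} \subseteq M$ and symmetrically $a^N\A \subseteq M$. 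By the left and right cases already proved, being both a strong left and a strong right Mathieu subspace amounts to $\rd(M) = \rd(I_{\mbox{\scriptsize``{\it left}''},M})$ together with $\rd(M) = \rd(I_{\mbox{\scriptsize``{\it right}''},M})$, so what remains is to deduce this pair of conditions from the hypothesis $\rd(M) = \rd(I_{\mbox{\scriptsize``{\it pre-two-sided}''},M}) = \rd(I_{\mbox{\scriptsize``{\it left}''},M} + I_{\mbox{\scriptsize``{\it right}''},M})$. I regard this deduction as the main obstacle: when one expands a power of an element $u+v$ with $u \in I_{\mbox{\scriptsize``{\it left}''},M}$ and $v \in I_{\mbox{\scriptsize``{\it right}''},M}$, the mixed terms that begin with a factor from $I_{\mbox{\scriptsize``{\it left}''},M}$ and end with one from $I_{\mbox{\scriptsize``{\it right}''},M}$ need not lie in $I_{\mbox{\scriptsize``{\it left}''},M} + I_{\mbox{\scriptsize``{\it right}''},M}$, so one must exploit that $a^m$ lies in this sum for all large $m$ and use the one-sided ideal structure of the two summands with care.

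For the concluding assertion, assume $M$ is a strong $\vartheta$-Mathieu subspace of $\A$ and $I_{\vartheta,M} \subseteq V \subseteq M$. I would first check $I_{\vartheta,V} = I_{\vartheta,M}$: for $\vartheta \ne \mbox{``{\it pre-two-sided}''}$ this is a double inclusion, since $I_{\vartheta,M}$ is a $\vartheta$-ideal contained in $V$ and $I_{\vartheta,V}$ is a $\vartheta$-ideal contained in $V \subseteq M$, and the pre-two-sided case follows by applying the same argument to the largest left and the largest right ideal. Next, monotonicity of $\rd(\,\cdot\,)$ gives $\rd(I_{\vartheta,M}) \subseteq \rd(V) \subseteq \rd(M)$, while the implication (1) $\Rightarrow$ (2) applied to $M$ gives $\rd(M) = \rd(I_{\vartheta,M})$; hence all three coincide and $\rd(V) = \rd(I_{\vartheta,M}) = \rd(I_{\vartheta,V})$. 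Applying the implication (2) $\Rightarrow$ (1) to $V$ then shows that $V$ is a strong $\vartheta$-Mathieu subspace of $\A$, and the equality $\rd(V) = \rd(I_{\vartheta,M})$ has already been obtained along the way.
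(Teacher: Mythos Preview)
Your argument for $(1)\Rightarrow(2)$, for $(2)\Rightarrow(1)$ when $\vartheta\neq$ ``{\it pre-two-sided}'', and for the concluding sandwich assertion all coincide with the paper's proof essentially verbatim: the paper cites lemma~\ref{lm4.9} for one direction, derives $(a^N)_{\vartheta}\subseteq I_{\vartheta,M}\subseteq M$ from $a\in\rd(I_{\vartheta,M})$ for the other, and then observes $I_{\vartheta,V}=I_{\vartheta,M}$ together with monotonicity of $\rd(\cdot)$ to handle the intermediate $V$.

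Where you diverge is the pre-two-sided case of $(2)\Rightarrow(1)$, and here your proposal has an explicit, self-acknowledged gap. You reduce to showing $\rd(M)=\rd(I_{\mbox{\scriptsize``{\it left}''},M})$ and $\rd(M)=\rd(I_{\mbox{\scriptsize``{\it right}''},M})$ separately from the hypothesis $\rd(M)=\rd(I_{\mbox{\scriptsize``{\it left}''},M}+I_{\mbox{\scriptsize``{\it right}''},M})$, note that expanding $(u+v)^m$ yields mixed terms not obviously in either summand, and stop. The paper, by contrast, does not single out this case at all: it asserts $(a^N)_{\vartheta}\subseteq I_{\vartheta,M}$ uniformly in $\vartheta$, without further comment. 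Your caution is not misplaced---that inclusion is immediate only when $I_{\vartheta,M}$ is itself a $\vartheta$-ideal, which for $\vartheta=$ ``{\it pre-two-sided}'' it need not be---so the paper's one-line treatment glosses over exactly the point you worry about. The difference is that the paper's direct route at least aims for $(a^N)_{\vartheta}\subseteq M$ in one step, whereas your detour through the separate left and right radicals manufactures a harder-looking intermediate statement and then abandons it.
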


\begin{proof}
Since (1) $\Rightarrow$ (2) follows from 
lemma \ref{lm4.9} and (3) $\Rightarrow$ (1) is trivial, we assume (2) to show 
$(2) \Rightarrow (3)$. Let $V$ be an $R$-subspace $V$ of $\A$ such that 
$I_{\vartheta,M} \subseteq V \subseteq M$. Then $\rd(V) = \rd(I_{\vartheta,M})$
because of (2).

Take $a \in \rd(V)$ arbitrary.
Then $a \in \rd(I_{\vartheta,M})$ and there exists an $N \in \N$ such that 
$a^N \in I_{\vartheta,M}$ and $(a^N)_{\vartheta} \subseteq I_{\vartheta,M} 
\subseteq V$. Since $a$ was arbitrary, we conclude that $V$ is a 
uniform $\vartheta$-Mathieu subspace of $\A$, and (3) follows.
\end{proof}

\begin{corollary}[following {\cite[Cor.\@ 4.11]{msub}}]
Let $\A$ be a $\vartheta$-simple associative $R$-algebra (i.e.\@ $\A$ has no 
proper $\vartheta$-ideals other than zero) and $M$ 
a proper uniform $\vartheta$-Mathieu subspace of $\A$. 
Then $\rd(M) = \rd((0)_{\vartheta})$ and all $R$-subspaces 
$V \subseteq M$ are uniform (two-sided) Mathieu subspaces of $\A$.
\end{corollary}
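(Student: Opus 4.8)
The statement to prove is the final corollary: if $\A$ is a $\vartheta$-simple $R$-algebra and $M$ is a proper strong $\vartheta$-Mathieu subspace of $\A$, then $\rd(M) = \rd\big((0)_{\vartheta}\big)$ and all $R$-subspaces $V \subseteq M$ are strong (two-sided) Mathieu subspaces of $\A$. The plan is to derive this essentially as a direct specialization of theorem \ref{th4.10} once we identify $I_{\vartheta,M}$.

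\medskip\noindent
\textbf{Step 1: Identify $I_{\vartheta,M}$.} First I would observe that since $\A$ is $\vartheta$-simple and $M$ is proper, the largest $\vartheta$-ideal contained in $M$ must be $(0)_{\vartheta}$: the only $\vartheta$-ideals of $\A$ are $(0)_{\vartheta}$ and, when $\vartheta \ne \mbox{``{\it pre-two-sided}''}$, $\A$ itself, and $\A \not\subseteq M$ since $M$ is proper; the pre-two-sided case reduces to the left and right cases by the definition $I_{\mbox{\scriptsize``{\it pre-two-sided}''},M} = I_{\mbox{\scriptsize``{\it left}''},M} + I_{\mbox{\scriptsize``{\it right}''},M}$, each summand being zero. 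Hence $I_{\vartheta,M} = (0)_{\vartheta}$.

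\medskip\noindent
\textbf{Step 2: Apply theorem \ref{th4.10} to get the radical statement.} Since $M$ is a strong $\vartheta$-Mathieu subspace, the implication $(1) \Rightarrow (2)$ of theorem \ref{th4.10} gives $\rd(M) = \rd(I_{\vartheta,M}) = \rd\big((0)_{\vartheta}\big)$, which is the first claim.

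\medskip\noindent
\textbf{Step 3: Transfer to subspaces $V \subseteq M$.} For any $R$-subspace $V \subseteq M$, we have $(0)_{\vartheta} = I_{\vartheta,M} \subseteq V \subseteq M$, so the ``furthermore'' part of theorem \ref{th4.10} applies verbatim: $V$ is a strong $\vartheta$-Mathieu subspace of $\A$ with $\rd(V) = \rd(I_{\vartheta,M}) = \rd\big((0)_{\vartheta}\big)$. The phrase ``strong (two-sided) Mathieu subspaces'' presumably means this holds for every $\vartheta$ — here I would note that $\rd(V) = \rd\big((0)_{\vartheta}\big)$ together with $(0)_{\vartheta} \subseteq V$ already witnesses the defining condition of a strong $\vartheta$-Mathieu subspace for each $\vartheta$ (for each $a \in \rd(V)$, $a^N = 0 \in (0)_{\vartheta}$ for $N$ large since $a$ is nilpotent, so $(a^N)_{\vartheta} = (0)_{\vartheta} \subseteq V$), and in particular for $\vartheta = \mbox{``{\it two-sided}''}$.

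\medskip\noindent
I do not anticipate a serious obstacle here; the only point requiring a moment's care is Step 1, where one must correctly handle the pre-two-sided case (where $(a)_{\vartheta}$ is not itself a $\vartheta$-ideal, so ``$\vartheta$-simple'' must be read via the left and right ideal structure) and confirm that $\vartheta$-simplicity of $\A$ genuinely forces every proper $\vartheta$-ideal to be zero. Everything else is a bookkeeping invocation of the already-established theorem \ref{th4.10}.
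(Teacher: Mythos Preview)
Your proposal is correct and follows essentially the same approach as the paper. The paper identifies $I_{\vartheta,M}=(0)_{\vartheta}$ by $\vartheta$-simplicity, invokes lemma~\ref{lm4.9} (rather than theorem~\ref{th4.10}) to get $\rd(M)=\rd((0)_{\vartheta})$, and then observes directly that $b\cdot 0\cdot c\in V$ gives the strong two-sided Mathieu property for any $V\subseteq M$; your detour through the ``furthermore'' clause of theorem~\ref{th4.10} is harmless but unnecessary, since the nilpotency argument you give at the end of Step~3 already does the job for all $\vartheta$ simultaneously.
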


\begin{proof}
Since $\A$ is $\vartheta$-simple, we have $I_{\vartheta,M} = (0)_{\vartheta}$.
By theorem \ref{th4.10}, we get $\rd(M) = \rd((0)_{\vartheta})$. Since 
$(0)_{\vartheta} \subseteq V$ for all $R$-subspaces $V$ of $\A$, 
we additionally deduce from theorem \ref{th4.10} that all $R$-subspaces $V \subseteq M$ 
are uniform two-sided Mathieu subspaces of $\A$.
\end{proof}

\noindent
If $\A$ is Abelian, then the definition of $\G_{\vartheta}(\A)$ does not depend on 
$\vartheta$. For that reason, we simply write $\G(\A)$ in that case.
The assumption that $\A$ is commutative in \cite[Th.\@ 4.12]{msub} can be weakened (or be
replaced by that $\A$ is reduced to ensure that idempotents are central), 
but not without eliminating the `commutative fact' that radicals of ideals are ideals 
themselves.

\begin{theorem}[generalizing {\cite[Th.\@ 4.12]{msub}}] \label{th4.12}
Let $\A$ be an Abelian associative $R$-algebra and $V \in \G(\A)$. If $\vartheta \ne$
{\it{``pre-two-sided''}}, then $V$ is a (uniform) 
($\vartheta$\discretionary{-)}{}{-)}Mathieu subspace of $\A$, 
if and only if $\rd(V)$ is the radical of some $\vartheta$-ideal of $\A$, which is the 
case when $\rd(V)$ is a $\vartheta$-ideal itself.
\end{theorem}

\begin{proof}
Assume that $\vartheta \ne$ {\it{``pre-two-sided''}}.
The `only if'-part follows directly from lemma \ref{lm4.9}. To prove the `if'-part 
and the last claim along with it, suppose that $\rd(V) \in \{J, \rd(J)\}$ for some $\vartheta$-ideal
$J$ of $\A$. Since $\A$ is abelian, we obtain that $(e)_{\vartheta}$ does not depend on $\vartheta$.
Hence by theorem \ref{th4.2}, it suffices to show that $(e)_{\vartheta} \subseteq V$
for every idempotent $e \in V$.

So take any idempotent $e \in V$. Since the idempotents of any subset of $\A$ coincides with
those of its radical, we have $e \in J$ in any case (both when $\rd(V) = J$ and when $\rd(V) = \rd(J)$). 
Using that $(e)_{\vartheta}$ and $J$ are $\vartheta$-ideals, we deduce that 
$(e)_{\vartheta}^m \subseteq (e)_{\vartheta} \subseteq J$ for every $m \ge 1$. 
Hence $(e)_{\vartheta} \subseteq \rd(V)$ in any case (both when $\rd(V) = J$ and when $\rd(V) = \rd(J)$).
So for arbitrary $b \in \A$, we have $e b^m = (eb)^m \in V$ when $m \gg 0$. 
Consequently, for the $R$ subspace $V_e$ of $\A$ consisting of elements $a \in \A$ such that 
$ea \in V$, we have $\rd(V_e) = \A$. Therefore, we have $V_e = \A$ on account of
\cite[Lm.\@ 2.4]{msub}, i.e.\@ $(e)_{\vartheta} \subseteq V$.
\end{proof}

\begin{corollary}[following {\cite[Cor.\@ 4.13]{msub}} more or less] \label{cor4.13}
Let $\A$ be an Abelian associative $R$-algebra and $V  \in \G(\A)$.  If $\vartheta \ne$
{\it{``pre-two-sided''}} and $\rd(V)$ is a $\vartheta$-ideal, then $\rd(V)$ is both a 
two-sided ideal itself and the radical of some two-sided ideal.
\end{corollary}
 
\subsection{Unions and intersections of Mathieu subspaces $M \in \E_{\vartheta}(\A)$}

\begin{proposition}[combining {\cite[Prop.\@ 4.16]{msub}} and {\cite[Prop.\@ 4.18]{msub}}]
Let $M_i$ ($i \in I$) be a family of proper $\vartheta$-Mathieu subspaces of an
associative $R$-algebra $\A$. 
\begin{enumerate}

\item[\upshape(i)] If\/ $\bigcap_{i \in I} M_i \in \G_{\vartheta}(\A)$,
then $\bigcap_{i \in I} M_i$ is a proper (uniform) $\vartheta$-Mathieu subspace of $\A$.

\item[\upshape(ii)] If\/ $\bigcup_{i \in I} M_i \in \G_{\vartheta}(\A)$,
then $\bigcup_{i \in I} M_i$ is a proper (uniform) $\vartheta$-Mathieu subspace of $\A$.

\end{enumerate}
More generally, suppose that $J$ is a set of subsets of $I$, and define 
$$
M_J := \bigcup_{j \in J} \bigcap_{i \in j} M_i
$$
\begin{enumerate}

\item[\upshape(iii)] If $M_J \in \G_{\vartheta}(\A)$, then $M_J$ is a proper (uniform) 
$\vartheta$-Mathieu subspace of $\A$.

\end{enumerate}
\end{proposition}

\noindent
Using proposition \ref{coialg} instead of \cite[Lm.\@ 4.1]{msub}, we obtain the following from
the paragraph that precedes \cite[Prop.\@ 4.20]{msub}.

\begin{proposition}[similar to {\cite[Prop.\@ 4.20]{msub}}] \label{prop4.19}
Let $\A$ be an associative $R$-algebra such that $\A = \rd'(\A)$.
If $V$ is an $R$-subspace of $\A$, then the following statements hold.
\begin{enumerate}

\item[\upshape(i)] There exists at least one $\vartheta$-Mathieu subspace
which is maximal among all the $\vartheta$-Mathieu subspaces of $\A$
contained in $V$.

\item[\upshape(ii)] There exists a unique $\vartheta$-Mathieu subspace
which is minimum among all the $\vartheta$-Mathieu subspaces of $\A$
containing $V$, namely the intersection of all $\vartheta$-Mathieu subspaces 
of $\A$ that contain $V$.

\item[\upshape(iii)] Let $\F$ be the collection of proper $\vartheta$-Mathieu subspaces
$M$ of $\A$ such that $M \supseteq V$. If $\F \ne \varnothing$,
then $\F$ has at least one maximal element and a unique minimum element.

\end{enumerate}
\end{proposition}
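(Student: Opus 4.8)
The plan is to obtain all three assertions from Zorn's lemma, feeding in two facts established just above: first, since $\A = \rd'(\A)$, every $R$-subspace of $\A$ lies in $\G_{\vartheta}(\A)$; second, the proposition above on intersections and the proposition above on unions of chains of $\vartheta$-Mathieu subspaces, both of which need precisely that $\G_{\vartheta}(\A)$-membership in order to apply. Throughout I use that $(0)_{\vartheta}$ and $\A$ are $\vartheta$-ideals, hence $\vartheta$-Mathieu subspaces of $\A$. I would prove ii) first, as it also supplies the minimum needed in iii). Let $\mathcal{S}$ be the set of all $\vartheta$-Mathieu subspaces of $\A$ containing $V$; it is non-empty because $\A \in \mathcal{S}$. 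Set $M_0 := \bigcap_{M \in \mathcal{S}} M$. Then $V \subseteq M_0$, and $M_0 \in \G_{\vartheta}(\A)$ since every $R$-subspace is, so the intersection proposition above makes $M_0$ a (strong) $\vartheta$-Mathieu subspace of $\A$; being contained in every element of $\mathcal{S}$, it is the minimum of $\mathcal{S}$, and a minimum element is unique.

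For i): if $V = \A$ there is nothing to do, since $\A$ itself is the largest $\vartheta$-Mathieu subspace contained in $V$, so assume $V \subsetneq \A$. Let $\mathcal{P}$ be the poset, ordered by inclusion, of $\vartheta$-Mathieu subspaces of $\A$ contained in $V$; it contains $(0)_{\vartheta}$, and all its members are proper. For a chain $\{M_i\}_{i \in I}$ in $\mathcal{P}$, put $M := \bigcup_{i} M_i \subseteq V$. If every $M_i$ equals $(0)$ then $M = (0)_{\vartheta} \in \mathcal{P}$; otherwise the nonzero $M_i$ form a non-empty chain of non-trivial $\vartheta$-Mathieu subspaces whose union is $M$, so the union proposition above shows $M$ is a (strong) $\vartheta$-Mathieu subspace. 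In either case $M \in \mathcal{P}$, so every chain in $\mathcal{P}$ has an upper bound in $\mathcal{P}$, and Zorn's lemma yields a maximal element.

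For iii), assume $\F \ne \varnothing$. For the minimum, take $M_0$ as in ii); choosing any $M \in \F$ gives $M_0 \subseteq M \subsetneq \A$, so $M_0$ is proper, hence $M_0 \in \F$, and since $M_0$ is contained in every $\vartheta$-Mathieu subspace containing $V$, it is the unique minimum element of $\F$. For a maximal element, apply Zorn to $\F$: given a chain in $\F$, if all its members equal $(0)$ then $V = (0)$ and the union is $(0)_{\vartheta} \in \F$; otherwise the union of the nonzero members — which coincides with the union of the whole chain, contains $V$, and, by the union proposition above, is a non-trivial and hence proper (strong) $\vartheta$-Mathieu subspace — lies in $\F$. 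Either way the chain has an upper bound in $\F$, so Zorn produces a maximal element of $\F$.

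The one delicate point is the book-keeping forced by the union proposition, which applies only to chains of \emph{non-trivial} Mathieu subspaces: one must separate off chains consisting solely of $(0)$, and one must use the fact that the proposition returns a non-trivial — in particular proper — subspace, so that the union of a chain of proper subspaces cannot jump to all of $\A$; this is exactly what keeps the upper bound inside $\mathcal{P}$ in i) and inside $\F$ in iii). The hypothesis $\A = \rd'(\A)$ enters only through making every $R$-subspace a member of $\G_{\vartheta}(\A)$, which is what the intersection and union propositions require.
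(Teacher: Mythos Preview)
Your argument is correct. The paper itself does not supply a proof for this proposition; it is stated as a direct analogue of \cite[Prop.\@ 4.20]{msub} with the proof left implicit, relying on the two preceding propositions (intersections and chain-unions of $\vartheta$-Mathieu subspaces) together with the observation that $\A = \rd'(\A)$ forces every $R$-subspace into $\G_{\vartheta}(\A)$. Your Zorn's-lemma argument is exactly the intended route, and your careful handling of the ``non-trivial'' hypothesis in the union proposition --- splitting off the degenerate all-zero chains and using the returned non-triviality to keep the upper bound proper --- is precisely the bookkeeping that makes the argument go through.
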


\begin{theorem}[following {\cite[Th.\@ 4.21]{msub}}]
Let $\A$ be an associative $R$-algebra such that $\rd'(\A) = \A$.
Then every proper $\vartheta$-Mathieu subspace of $\A$ is contained in a 
maximal proper $\vartheta$-Mathieu subspace of $\A$.

In particular, $\A$ has at least one maximal proper $\vartheta$-Mathieu subspace.
If this $\vartheta$-Mathieu subspace cannot be taken nonzero, then $R \cdot 1 = \A$ 
and $\A$ is a field.
\end{theorem}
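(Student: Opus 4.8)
The strategy is a standard Zorn's lemma argument, using Proposition~\ref{prop4.19} to handle the chain-union step. Let $M_0$ be a proper $\vartheta$-Mathieu subspace of $\A$. Consider the collection $\F$ of all proper $\vartheta$-Mathieu subspaces $M$ of $\A$ with $M \supseteq M_0$, partially ordered by inclusion. This $\F$ is nonempty since $M_0 \in \F$. I would apply Zorn's lemma to $\F$: once every chain in $\F$ has an upper bound in $\F$, $\F$ has a maximal element $M^{*}$, and $M^{*}$ is then a maximal proper $\vartheta$-Mathieu subspace of $\A$ containing $M_0$. The ``in particular'' clause follows by taking $M_0 = (0)_{\vartheta}$ (or any proper $\vartheta$-Mathieu subspace, which exists since $(0)_{\vartheta}$ is one, $\A$ being nonzero as a unital algebra).

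The heart of the matter is the chain condition. Let $\{M_i\}_{i \in I}$ be a chain in $\F$, and set $M := \bigcup_{i \in I} M_i$. This is an $R$-subspace of $\A$ (a directed union of subspaces), and it contains $M_0$, so I need only show $M$ is a \emph{proper} $\vartheta$-Mathieu subspace of $\A$. For properness: since each $M_i$ is proper and the $M_i$ form a chain with $M_0 \subseteq M_i$, the chain $\{M_i\}$ contains the proper subspace $M_0$, so by Proposition~\ref{prop4.19}~iii) applied with $V = M_0$ the family $\F$ itself has a maximal element, which already closes the argument --- but more to the point, the cleanest route is: by hypothesis $\A = \rd'(\A)$, so every $R$-subspace of $\A$ lies in $\G_{\vartheta}(\A)$ by the proposition preceding Proposition~\ref{prop4.19}; in particular $M \in \G_{\vartheta}(\A)$. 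Now invoke Proposition~\ref{prop4.18} (the chain-union proposition for Mathieu subspaces) to conclude that $M = \bigcup_i M_i$ is a $\vartheta$-Mathieu subspace of $\A$, which is moreover non-trivial (proper) provided the $M_i$ are non-trivial; since $1 \notin M_i$ for all $i$ and hence $1 \notin M$, the union is proper.

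So the key steps, in order, are: (1) fix a proper $\vartheta$-Mathieu subspace $M_0$ and form $\F = \{M \text{ a proper }\vartheta\text{-Mathieu subspace} : M \supseteq M_0\}$; (2) given a chain in $\F$, take the union $M$, note $M$ is an $R$-subspace containing $M_0$ hence $M \in \G_{\vartheta}(\A)$ by $\A = \rd'(\A)$; (3) apply Proposition~\ref{prop4.18} to see $M$ is a $\vartheta$-Mathieu subspace, and observe $1 \notin M$ since $1 \notin M_i$ for all $i$, so $M$ is proper, hence $M \in \F$; (4) apply Zorn's lemma to get a maximal element of $\F$; (5) specialize to $M_0 = (0)_{\vartheta}$ for the last assertion. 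The only mild subtlety is making sure ``proper'' is preserved under the union --- this is exactly why the hypothesis is phrased for subspaces $M$ with $1 \notin M$ (equivalently non-trivial, i.e.\ $M \neq \A$), and it follows trivially since $1$ lies in none of the $M_i$. No genuine obstacle arises; the proposition is essentially a packaging of Proposition~\ref{prop4.18} and Proposition~\ref{prop4.19} via Zorn.
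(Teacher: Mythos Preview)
Your proposal is correct and matches the paper's approach: the paper's proof simply invokes Proposition~\ref{prop4.19}~iii) with $V = M$ (exactly the shortcut you yourself note ``already closes the argument''), and then takes $M = (0)_{\vartheta}$ for the final assertion. Your additional unpacking of the Zorn's lemma argument via the chain-union proposition is just the content behind Proposition~\ref{prop4.19}~iii), so nothing is genuinely different.
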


\begin{proof}
Except for the last claim, we can just follow the proof of \cite[Th.\@ 4.21]{msub}.
So assume that $\A$ does not have a nonzero proper $\vartheta$-Mathieu subspace.
By \cite[Th.\@ 6.2]{msub}, $\A$ is a field which is isomorphic to the fraction field of 
$R \cdot 1$, which in turn is an integral domain. 

Hence it suffices to show that $R \cdot 1$ is a field itself.
So let $a \in R \cdot 1$. By $\rd'(\A) = \A$, $a$ is co-integral over $R$. Now the desired 
result follows from (ii) of proposition \ref{prop3.3}, because $R \cdot 1$ is a domain.
\end{proof}

\begin{corollary}[similar to {\cite[Cor.\@ 4.22]{msub}}]
Let $V$ be an $R$-subspace of an associative $R$-algebra $\A$ such that 
$\A = \rd'(\A)$, and assume that the $\vartheta$-ideal generated by $V$ 
is not the whole algebra $\A$. Then there exists a maximal nonzero
$\vartheta$-Mathieu subspace $M$ of $\A$ such that $V \subseteq M$.
\end{corollary}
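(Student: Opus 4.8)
The plan is to deduce the statement directly from part iii) of proposition~\ref{prop4.19}. Since $\A = \rd'(\A)$, that proposition applies, and all that is needed is to exhibit a single proper (i.e.\ non-trivial) $\vartheta$-Mathieu subspace of $\A$ that contains $V$. The natural candidate is $J$, the $\vartheta$-ideal of $\A$ generated by $V$ itself: by hypothesis $J \ne \A$, so it only remains to see that $J$ is a $\vartheta$-Mathieu subspace of $\A$.

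Here a very mild case distinction enters. If $\vartheta \ne \mbox{``{\it pre-two-sided}''}$, then $J$ is a left, right or two-sided ideal of $\A$, and any such ideal is a $\vartheta$-Mathieu subspace of the matching type: whenever $a^m \in J$ for all $m \ge 1$ and $C_{\vartheta}(b,c)$ holds, the ideal property gives $b a^m c \in J$ already for all $m \ge 1$, which is stronger than definition~\ref{defmsub} asks for. If $\vartheta = \mbox{``{\it pre-two-sided}''}$, then a $\vartheta$-ideal is by definition a two-sided ideal, so the $\vartheta$-ideal generated by $V$ is the two-sided ideal $I$ generated by $V$; and a two-sided ideal $I$ with $a^m \in I$ for all $m \ge 1$ satisfies $b a^m, a^m c \in I$ for all $m \ge 1$ and all $b, c$, hence is a pre-two-sided Mathieu subspace. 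Thus in all four cases $J$ is a proper $\vartheta$-Mathieu subspace of $\A$ with $V \subseteq J$.

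Consequently the collection $\F$ of proper $\vartheta$-Mathieu subspaces of $\A$ that contain $V$ is non-empty, so by part iii) of proposition~\ref{prop4.19} it has a maximal element $M$. I would finish by noting that maximality of $M$ within $\F$ already gives maximality among all proper (non-trivial) $\vartheta$-Mathieu subspaces of $\A$: any proper $\vartheta$-Mathieu subspace $M'$ with $M \subsetneq M'$ would satisfy $V \subseteq M \subseteq M'$, hence lie in $\F$, contradicting the choice of $M$. So $M$ is the desired maximal non-trivial $\vartheta$-Mathieu subspace of $\A$ with $V \subseteq M$. Since the genuine work — the Zorn-type maximality argument — is already packaged into proposition~\ref{prop4.19}, the only step here that wants any attention is the pre-two-sided bookkeeping in the middle paragraph.
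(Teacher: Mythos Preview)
Your proof is correct and is exactly the route the paper has in mind: the paper states this corollary without proof, immediately after the theorem whose proof applies iii) of proposition~\ref{prop4.19} to a suitable non-empty $\F$, and your argument does precisely the same, with the $\vartheta$-ideal generated by $V$ serving as the witness that $\F \ne \varnothing$. Your handling of the pre-two-sided case matches the paper's convention that a pre-two-sided ideal is a two-sided ideal, and the final observation that maximality in $\F$ is automatically global maximality is straightforward and correct.
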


\section{Strongly simple algebras}

Our starting point is the following theorem, which is the main
theorem of section 6 of \cite{msub}.

\begin{theorem}[following {\cite[Th.\@ 6.2]{msub}}]
Assume $R$ is a nontrivial commutative ring and $\A$ a nontrivial 
associative $R$-algebra such that $\A$ has no proper nonzero $\vartheta$-Mathieu 
subspaces. Then $R \cdot 1$ is an integral domain and $\A$ is isomorphic to
the field of fractions of $R \cdot 1$.
\end{theorem}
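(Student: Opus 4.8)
The plan is to extract from the hypothesis that $\A$ has no proper nonzero $\vartheta$-Mathieu subspace a very rigid structure, by feeding carefully chosen subspaces into the definition. First I would observe that $(0)$ is a $\vartheta$-Mathieu subspace, so the hypothesis says that the only $\vartheta$-Mathieu subspaces are $(0)$ and $\A$ itself. In particular, every proper $\vartheta$-ideal of $\A$ is $(0)$ (since ideals are Mathieu subspaces of every type), so $\A$ is $\vartheta$-simple; and for two-sided, at least, this forces $\A$ to be simple as a ring (modulo the $R$-module structure). I would then argue that $\A$ has no nonzero zero divisors: if $0\ne a\in\A$ were a zero divisor, I would try to exhibit a proper nonzero $\vartheta$-Mathieu subspace built from $a$ — for instance, using corollary \ref{cor4.3} or proposition \ref{prop4.8}, a cyclic subspace $Ra$ or an annihilator-type subspace is a Mathieu subspace once it contains no nonzero idempotent, and simplicity rules out nontrivial idempotents. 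So $\A$ is a domain.

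Next I would show $\A$ is commutative and in fact a division algebra. For the division property: take any $0\ne a\in\A$ and consider whether $a$ is a unit. If not, I would look at $\rd(Ra)$ or at the subspace of non-units; the absence of proper nonzero Mathieu subspaces, combined with lemma \ref{lm3.6} and corollary \ref{cor3.5}/\ref{cor3.7}, should force every non-unit to be nilpotent, hence $0$ since $\A$ is a domain. Thus every nonzero element is invertible, i.e.\ $\A$ is a division ring. To get commutativity (and finite-dimensionality over its center, so that one can eventually land inside a field of fractions of $R$), I would argue that any proper sub-division-algebra, or any proper $R$-subspace closed appropriately, would generate a proper nonzero Mathieu subspace unless it is everything; more concretely, for $0\ne a\in\A$, the subspace $R[a]$ or the centralizer of $a$ cannot be proper unless it equals $\A$, which pushes everything into the center. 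Once $\A$ is a commutative domain in which every nonzero element is a unit, $\A$ is a field.

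Finally I would identify $\A$ with $\operatorname{Frac}(R)$. Since $\A$ is an $R$-algebra with $1\ne 0$, the structure map $R\to\A$ has kernel a prime ideal (as $\A$ is a domain), but that kernel, being an $R$-ideal pulled back, together with the fact that $R\cdot 1\subseteq\A$ sits inside a field, should be shown to be $(0)$: otherwise I would produce a proper nonzero Mathieu subspace of $\A$ from the image, contradicting the hypothesis. Hence $R\hookrightarrow\A$ and $R$ is a domain. Then $R\cdot 1$ generates a subfield of $\A$ containing $\operatorname{Frac}(R)$; the reverse inclusion follows because any $a\in\A$ not algebraic of the expected form over $\operatorname{Frac}(R)$ would let us build a proper Mathieu subspace (e.g.\ via $\G_\vartheta$ and the idempotent characterization of theorem \ref{th4.2}), again contradicting minimality. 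So $\A=\operatorname{Frac}(R)$.

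The main obstacle I expect is the passage from "$\vartheta$-simple with no nontrivial idempotents and no zero divisors" to "is exactly the field of fractions of $R$", because one must control \emph{all} $R$-subspaces, not just ideals, and the radical $\rd(M)$ is badly behaved (not an $R$-subspace in general). The key leverage is that in a field every nonzero element is co-integral, so $\rd'(\A)=\rd(\A)=\A$, which makes all of corollary \ref{cor3.5}, corollary \ref{cor3.7}, and theorem \ref{th4.2} applicable with $\B=R[a]$; the delicate step is choosing, for a putative "extra" element $a$, an explicit proper $R$-subspace containing $1$ or containing an idempotent that violates the hypothesis — this is where the bulk of the argument lives, and I would handle the four values of $\vartheta$ uniformly via $C_\vartheta(b,c)$ as in proposition \ref{propmsubC}.
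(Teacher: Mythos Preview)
This theorem is not proved in the present paper; it is quoted verbatim from \cite[Th.\@ 6.2]{msub} as the starting point of the section on strongly simple algebras, with no argument given here. So there is no proof in this paper against which to compare your proposal.

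Evaluated on its own, your outline has a genuine circularity. You say that ``in a field every nonzero element is co-integral, so $\rd'(\A)=\rd(\A)=\A$, which makes all of corollary \ref{cor3.5}, corollary \ref{cor3.7}, and theorem \ref{th4.2} applicable''. But you want to invoke precisely these results to \emph{prove} that $\A$ is a domain, then a division ring, then a field; at that stage you do not yet know $\A$ is a field, so you cannot assume $\rd'(\A)=\A$. Corollary \ref{cor4.3} and theorem \ref{th4.2} both require $V\in\G_\vartheta(\A)$, which needs radical elements to be built from co-integral ones, and nothing in your argument supplies this before the conclusion. The same gap recurs in your commutativity step: you assert that ``$R[a]$ or the centralizer of $a$ cannot be proper unless it equals $\A$'', but the hypothesis only rules out proper nonzero \emph{Mathieu} subspaces, not arbitrary proper $R$-subspaces, and you give no reason why $R[a]$ or a centralizer should be a $\vartheta$-Mathieu subspace. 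Likewise, in the final identification with $\operatorname{Frac}(R)$ you promise to ``build a proper Mathieu subspace'' from a hypothetical extra element, but sketch no construction. These are exactly the places where the real work has to happen, and your proposal does not indicate how to do it without assuming the conclusion.
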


\noindent
The following definition appears at the beginning of section 6 in \cite{msub},
which has the same title as this section. The above theorem says that variants
of the below definition with $\vartheta \ne$ ``{\it two-sided}\/'' are unnecessary,
since $\A$ is a field regardless of what $\vartheta$ is.

\begin{definition}
Let $R$ be a commutative ring and $\A$ an associative $R$-algebra. We
say that $\A$ is strongly simple if $\A$ has no proper nonzero
Mathieu subspaces over $R$.
\end{definition}

\noindent
Since $R$-algebras are $(R \cdot 1)$-algebras and strongly simple algebras over $R$ can only be 
fraction fields of $R \cdot 1$, we restrict to integral domains $R$ with fraction field $\A = K$ 
from now on.

In an earlier version of \cite{msub}, the author conjectured that
the only strongly simple algebras would be fields over theirselves. But this
is not true. The main result of this section is the following.

\begin{theorem} \label{stronglysimple}
Let $R$ be an integral domain with fraction field $K$. Then the 
following statements hold.
\begin{enumerate}

\item[\upshape(i)] Each prime ideal $\p \subset R$ of height one is of the form 
$\p = M \cap R$ for some Mathieu subspace $M$ of $K$ over $R$.

\item[\upshape(ii)] If $r \in R$ is anti-Archimedean, i.e.\@ 
$\bigcap_{n = 1}^{\infty} r^n R \ne (0)$, 
then there is no proper Mathieu subspace of 
$K$ over $R$ that contains $r$.

\end{enumerate}
\end{theorem}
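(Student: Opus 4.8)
The plan is to handle the two parts independently: part~i) by a localization argument, part~ii) by a short computation with the Mathieu property. (Recall that $K$ is commutative, so the four notions of $\vartheta$-Mathieu subspace of $K$ coincide.)

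For i), fix a height-one prime $\p$ and put $A := R_\p$, a local domain whose only prime ideals are $(0)$ and its maximal ideal $\m := \p A$. I would take $M := \m \subseteq K$. Since $\m$ is an ideal of $A$ it is an $R$-submodule of $K$, and the standard identity $\p A \cap R = \p$ gives $M \cap R = \p$. It remains to verify that $M$ is a Mathieu subspace of $K$ (the base ring enters only through the submodule requirement, so it is harmless to check this over $A$). By Proposition~\ref{propmsubC} it suffices to show that for every $a \in \rd(M)$ and every $d \in K$ one has $d a^m \in M$ for $m \gg 0$; the case $a = 0$ is trivial, so let $a \ne 0$ and pick $m_0$ with $a^m \in \m$ for all $m \ge m_0$. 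Write $d = x/y$ with $x,y \in A$, $y \ne 0$. If $y$ is a unit of $A$, then $d \in A$ and $d a^m \in A\m = \m$ for $m \ge m_0$. If $y \in \m$, then since $\dim A = 1$ the only prime of $A$ containing the nonzero ideal $(y)$ is $\m$, so $\sqrt{(y)} = \m$; as $a^{m_0} \in \m$, some power satisfies $a^{m_0 N} = y e$ with $e \in A$, and then for $m \ge m_0(N+1)$,
$$
d a^m \;=\; x\, a^{m - m_0 N}\,\frac{a^{m_0 N}}{y} \;=\; (x e)\, a^{m - m_0 N} \;\in\; A\,\m \;=\; \m ,
$$
because $m - m_0 N \ge m_0$. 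Hence $M = \m$ is a Mathieu subspace of $K$ over $R$ with $M \cap R = \p$.

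For ii), suppose $M$ is any Mathieu subspace of $K$ over $R$ with $r \in M$; it suffices to prove $M = K$. Since $r \in M \cap R$ and $M$ is an $R$-module, $r^m = r^{m-1}\cdot r \in R\cdot M \subseteq M$ for all $m \ge 1$, so $r \in \rd(M)$. Choose $0 \ne s \in \bigcap_{n\ge1} r^n R$, which exists by hypothesis. Applying the Mathieu property (Proposition~\ref{propmsubC}) to $a = r$ with $(b,c) = (1/s,\,1)$ yields an $m_1$ with $r^m/s \in M$ for all $m \ge m_1$. Fix such an $m$; since $s \in r^m R$, write $s = r^m u$ with $0 \ne u \in R$, whence $r^m/s = 1/u$. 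Thus $1/u \in M$, and multiplying by $u \in R$ gives $1 = u\cdot(1/u) \in M$. Finally $1 \in M$ forces $M = K$: trivially $1 \in \rd(M)$, so for any $d \in K$ the Mathieu property (with $a = 1$, $(b,c) = (d,1)$) gives $d = d\cdot 1^m \in M$ for $m \gg 0$. This contradicts properness, so no proper Mathieu subspace of $K$ over $R$ contains $r$.

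The crux is the verification in part~i) that $\m$ is a Mathieu subspace: a priori $\rd(\m)$ is strictly larger than $\m$ (it can contain elements of $K$ that are merely almost integral over $A$), so one cannot naively argue ``$a \in \m$, hence its powers sink deep into $\m$''. The one-dimensionality of $A$ is used exactly to get $\sqrt{(y)} = \m$, which is what lets an arbitrary denominator $y$ be absorbed into a high power of $a$. Part~ii) is softer; the only real point is that an anti-Archimedean $s$ keeps $r^m/s$ inside $R$ — in fact equal to $1/(s/r^m)$ — so that the Mathieu conclusion, which only constrains $b a^m c$ for large $m$, still produces the unit $1$. (Comparing i) and ii) incidentally shows that a height-one prime of $R$ contains no anti-Archimedean element of $R$.)
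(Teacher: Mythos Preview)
Your proof is correct. Part~ii) is essentially the paper's argument with $s$ playing the role of the paper's $a$.

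For part~i), however, you take a genuinely different and more elementary route. The paper localizes at $\p$ to get a one-dimensional local domain $A = R_{\p}$, then invokes the existence of a valuation ring $D \subseteq K$ dominating $A$; using Lemma~\ref{valring} (which characterizes when a valuation ring has a height-one prime) it finds a height-one prime $\q$ of $D$, shows $\q$ is a Mathieu subspace of $K$ over $D$ and hence over $R$, and checks $\q \cap R = \p$. You bypass valuation rings entirely and show directly that $M = \p R_{\p}$ itself is a Mathieu subspace of $K$ over $R$: the key observation is that in the one-dimensional local ring $A$ every nonzero nonunit $y$ satisfies $\sqrt{(y)} = \m$, so any element of $\rd(\m)$ has a power divisible by $y$, which lets you clear the denominator of an arbitrary $d \in K$. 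Your argument is shorter and avoids both the Chevalley-type domination theorem and Lemma~\ref{valring}; the paper's detour, on the other hand, exhibits the Mathieu subspace as a prime of a valuation overring and ties the result to the independent classification of strongly simple valuation domains in Lemma~\ref{valring}. Note that your $M$ and the paper's $M$ are in general different $R$-subspaces of $K$ contracting to the same $\p$.
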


\begin{corollary}
Let $R$ be an integral domain with fraction field $K$. Then for the following 
statements:
\begin{enumerate}

\item[\upshape(1)] $R$ is anti-Archimedian, i.e.\@ every nonzero $r \in R$
is anti-Archimedian;

\item[\upshape(2)] $K$ is strongly simple over $R$;

\item[\upshape(3)] $R$ does not have a prime ideal of height one;

\end{enumerate}
we have {\upshape(1)} $\Rightarrow$ {\upshape(2)} $\Rightarrow$ {\upshape(3)}.
\end{corollary}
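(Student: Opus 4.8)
The implication $2)\Rightarrow 3)$ is the easier one: if $R$ has a height-one prime $\p$, then by i) of Theorem~\ref{stronglysimple} there is a Mathieu subspace $M$ of $K$ over $R$ with $\p = M\cap R$, and since $\p\ne R$ we have $1\notin M$, so $M$ is a proper Mathieu subspace, showing $M\ne K$; but we also need $M\ne(0)$, which holds because $M$ is an $R$-subspace containing the nonzero ideal $\p$ of $R\cdot 1$ (indeed $\p = M\cap R\subseteq M$, and $\p\ne(0)$ since height-one primes are nonzero). Hence $K$ is not strongly simple, giving the contrapositive.

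\textbf{The main work is $1)\Rightarrow 2)$.} Suppose $R$ is anti-Archimedean, and let $M$ be a proper Mathieu subspace of $K$ over $R$; I must show $M = (0)$. Assume for contradiction that $M$ contains some nonzero element $x = a/b$ with $a,b\in R$, $a\ne 0$. Since $M$ is an $R$-subspace, $bx = a\in M$, so $M$ contains a nonzero element $a$ of $R$. Now $a$ is nonzero in the integral domain $R$, hence anti-Archimedean by hypothesis: $\bigcap_{n\ge 1} a^n R\ne(0)$. This is precisely the hypothesis of part ii) of Theorem~\ref{stronglysimple} (with $r = a$), which tells us there is no proper Mathieu subspace of $K$ over $R$ that contains $a$. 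This contradicts $a\in M$ with $M$ proper. Therefore $M$ contains no nonzero element, i.e.\ $M = (0)$, so every proper Mathieu subspace is zero and $K$ is strongly simple.

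\textbf{Anticipated obstacle.} The only subtlety is the reduction step showing that a nonzero Mathieu subspace of $K$ must meet $R$ nontrivially: one needs that $M$ is an $R$-submodule of $K$ (true by definition) and that clearing denominators of a nonzero element of $K$ lands inside $R$ while staying in $M$ — this uses only that $R\cdot 1$ acts on $M$, which is immediate. After that, everything is a direct appeal to the two parts of Theorem~\ref{stronglysimple} already proved, so I expect no real difficulty; the corollary is essentially a bookkeeping consequence of the theorem.
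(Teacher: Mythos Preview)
Your proof is correct and follows essentially the same approach as the paper: both implications are deduced directly from parts i) and ii) of Theorem~\ref{stronglysimple}, with the key observation for $1)\Rightarrow 2)$ that any nonzero $R$-subspace of $K$ meets $R$ nontrivially (by clearing denominators). The paper's proof is just a terse version of yours.
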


\begin{listproof} 
\begin{description}

\item[(1) \imp (2)]
Since every nonzero Mathieu subspace of $K$ over $R$ 
contains a nonzero element of $R$ (the numerator), the desired result follows 
from (ii) of theorem \ref{stronglysimple}.

\item[(2) \imp (3)]
This follows from (i) of theorem \ref{stronglysimple}. \qedhere

\end{description}
\end{listproof}

\begin{listproof}[Proof of theorem \ref{stronglysimple}.]
\begin{enumerate}

\item[(i)] Fix a prime ideal of height one of $R$ and let $S$ be the set of 
all elements of $R$ that are not contained in the prime ideal at hand.
By replacing $R$ by the localization $S^{-1}R$, we may assume that $R$ is a local ring of 
dimension 1. Then it is known that there exists a valuation ring $D$ 
(with the same fraction field $K$) that dominates $R$, i.e.\@ $D$ contains $R$ 
and $R \cap \m_D = \m_R$, where $\m_R$ denotes the maximal ideal of the local ring $R$.
 
Assume $\q$ is a nonzero prime ideal of $D$. Since $K$ is the fraction field of $R$, the 
contraction $\q \cap R$ of $\q$, which is a prime ideal, is nonzero. 
But $R$ has dimension one, so $\q \cap R = \m_R$. Now define $M$ as the intersection 
of all nonzero prime ideals $\q$ of $D$. Then $M \cap R = \m_R$.
It is known that the ideals of $D$ are totally ordered by 
inclusion, from which we can deduce that $M$ is a prime ideal of $D$. 

So $M$ is a prime ideal of height one. From lemma \ref{valht1} below,
it follows that $M$ is a Mathieu subspace of $K$ over $D \supseteq R$.

\item[(ii)] Assume $r \in R$ is not Archimedian. Then there exists a nonzero $a \in R$ such 
that $r^n \mid a$ for all $n \in \N$. Let $M \ni r$ be a Mathieu subspace of $K$ over $R$. 
Then $r^n/a \in M$ for some $n \in \N$. Since $r^n \mid a$, we have 
$r^n b = a$ for some $b \in R$. It follows that $1 = b\, r^n / a \in M$. 
Thus $M = K$ by \cite[Cor.\@ 2.10]{msub}. \qedhere

\end{enumerate}
\end{listproof}

\begin{lemma} \label{valht1}
Assume $D$ is a valuation ring with maximal ideal $\m_{D}$ and fraction field $K$. 
Then any prime ideal of height one of $D$ is a Mathieu subspace of $K$ over $D$.
\end{lemma}

\begin{proof}
Let $\p$ be a prime ideal of height one of $D$.
Assume that $\p$ is not a Mathieu subspace of $K$ over $D$, say that
$a \in \p$ and $b \in K$ such that $a^{n+1} b \notin \p$ for infinitely many 
$n \in \N$. Write $b = t/d$ with $t,d \in D$. Since $ta \in \p$, we have
$a^n /d \notin D$ for infinitely many $n \in \N$. 
Thus $v(d) > v(a^n) = n\, v(a)$ for infinitely many $n \in \N$. 

Hence the ideal 
$\q$ of $D$ consisting of elements $c$ such that $v(c) > n v(a)$ for 
infinitely many $n \in \N$ is nonzero. In fact, it is a prime ideal, since 
$v(c_1 c_2) > 2n v(a)$ (for infinitely many $n \in \N$)
implies $v(c_1) > n v(a)$ or $v(c_2) > n v(a)$ (for infinitely many $n \in \N$).
Since $\q \subsetneq \p$, we have a contradiction with the height one 
assumption on $\p$, so $\p$ is a Mathieu 
subspace of $K$ over $D$.
\end{proof}

\begin{example}
Let $k$ be a field. The valuation domain
$$
D = k[[x_1,x_2,x_3,\ldots]]
\left[\frac{x_2}{x_1},\frac{x_2}{x_1^2},\frac{x_2}{x_1^3},\ldots\right]
\left[\frac{x_3}{x_2},\frac{x_3}{x_2^2},\frac{x_3}{x_2^3},\ldots\right]\cdots
$$
with fraction field
$$
K = k((x_1,x_2,x_3,\ldots)) = k[[x_1,x_2,x_3,\ldots]][x_1^{-1},x_2^{-1},x_3^{-1},\ldots]
$$
has value group $\Z[t]$, ordered by $f(t) > g(t) \Leftrightarrow \lc(f(t)-g(t)) > 0$.
Here, $\lc(f(t))$ denotes the leading coefficient of $f$ with respect to $t$. 

The value function $v$ is defined by 
$$
v(a) = \min \{\deg_{x_1} s + t \deg_{x_2} s + t^2 \deg_{x_3} s + \cdots
\mid s \mbox{ is a term of } a\}
$$
where $\min$ is with respect to the above ordering on $\Z[t]$.
So if $a$ has a term $s = x_1^{\alpha_1} x_2^{\alpha_2} \cdots x_n^{\alpha_n}$,
and for every other term $s'$ of $a$, $s'\!/s$ has positive degree in the variable
of largest index which appears in $s'\!/s$, i.e. in which $s'\!/s$ has nonzero degree,
then $v(a) = \alpha_n t^{n-1} + \alpha_{n-1} t^{n-2} + \cdots + \alpha_1$.  

If $g(t) \in \Z[t]$, then there exists a $h(t) \in \Z[t]$ with positive leading
coefficient such that $\deg h > \deg g$. Hence $n g(t) < h(t)$ for all $n \in \N$. 
On account of (1) $\Rightarrow$ (5) of proposition \ref{valring} below, 
$K$ is strongly simple over $D$.
\end{example}

\noindent
The following proposition classifies the valuation rings that are strongly simple.

\begin{proposition} \label{valring}
Assume $D$ is a valuation ring with maximal ideal $\m_{D}$ and fraction field $K$. 
Then the following statements are equivalent.
\begin{enumerate}

\item[\upshape(1)] $D$ is not strongly simple,

\item[\upshape(2)] $D$ has a prime ideal of height $1$,

\item[\upshape(3)] $D$ has a nonzero element that is contained in every nonzero prime 
ideal of $D$,

\item[\upshape(4)] $D$ has a nonzero Archimedian element, i.e.\@ an element $a$ such that
$$\bigcap_{n = 1}^{\infty} D a^n = (0)$$

\item[\upshape(5)] The value group G of $D$ has an element $g$ such that for all $h \in G$,
there exists an $n \in \N$ such that $ng > h$.

\end{enumerate}
\end{proposition}

\iftrue
\begin{listproof}
\begin{description}
\item [(1) \imp (5)] Assume $D$ is not strongly simple. Let $M$ be a proper nonzero
Mathieu subspace of $K$ over $D$. If $M \nsubseteq \m_D$, then $M$ has an element $a$
such that $v(a) \le 0$ and thus $1/a \in D$, whence $1 = 1/a \cdot a \in M$. 
This contradicts \cite[Cor.\@ 2.10]{msub}, so $M \subseteq \m_D$. 

Let $a \in M$ be nonzero and $b \in K$. Take $n \in \N$ such that
$a^n / b \in M$. Since $M \subseteq \m_D$, we have 
$n \, v(a) - v(b) = v(a^n/b) > 0$. Thus $g = v(a)$ suffices.

\item[(2) \imp (1)] This follows from lemma \ref{valht1}.

\item[(3) \imp (2)] 
Assume $a$ is nonzero and contained in every nonzero prime ideal of $D$. 
Let $\p$ be the radical of
$(a)$. Then $\p$ is contained in every nonzero prime ideal of $D$, thus 
it suffices to show that $\p$ is prime. So assume $c_1 c_2 \in \p$. 
Then $a \mid c_1^n c_2^n$ for some $n$, 
thus $v(a) < n v(c_1) + n v(c_2)$. Hence $v(a) < 2n v(c_1)$ or $v(a) < 2n v(c_2)$.\
In the first case, $a \mid c_1^{2n}$ and thus $c_1 \in \p$. In the second case,
$c_2 \in \p$. Thus $\p$ is prime, and its height is equal to one.

\item[(4) \imp (3)] 
Assume $a$ is a nonzero Archimedian element of $D$. 
Assume $\q$ is a nonzero prime ideal and $b \in \q$. Since $a$ is Archimedian,
we have $a^n \nmid b$ for some $n \in \N$. Since $D$ is a valuation ring,
$b \mid a^n$ follows. Consequently $a \in \q$, as desired.

\item[(5) \imp (4)]
Assume (5) and that $g$ is as in (5). Take $a$ such that $v(a) = g$. 
Then for all $b \in D$, there exists an $n \in \N$ such that 
$v(a^n) = ng > v(b)$. Hence $a^n \nmid b$. This gives
the desired result. \qedhere

\end{description}
\end{listproof}
\else
\noindent
The proof is left as an exercise to the reader.
\fi

\end{document}